\newcommand{\hide}[1]{}
\def\:{\mkern1mu\colon}
\def\RR{\mathbb{R}}
\def\PP{\mathbb{P}}
\def\pp{\mathbf{p}}
\def\qq{\mathbf{q}}
\def\nn{\mathbf{n}}
\def\pxv{P_{\langle\mathbf{X},S\rangle}}
\def\pyv{P_{\langle\mathbf{Y},S\rangle}}
\def\pxm{P_{\langle\mathbf{x},s\rangle}}
\def\pym{P_{\langle\mathbf{y},s\rangle}}
\DeclareMathOperator{\psin}{psin}
\DeclareMathOperator{\vol}{vol}
\newtheorem{lema}[theorem]{Lemma}
\newtheorem{coro}[theorem]{Corollary}
\begin{document}

\title{GEOMETRIC CONSTRUCTIONS ON CYCLES IN $\RR^n$}
\UniCountry{University of Ljubljana, Slovenia}
\author[B. Jur\v ci\v c Zlobec and N. Mramor Kosta]{Borut Jur\v ci\v c Zlobec and Ne\v za Mramor Kosta}
\address[Borut Jur\v ci\v c Zlobec]{Department of Electrical Engineering, University of Ljubljana\\ 1000 Ljubljana\\ Slovenia}
\email[Borut Jur\v ci\v c Zlobec]{borut.jurcic@fe.uni-lj.si}
\address[Ne\v za Mramor Kosta]{Department of Computer and Information Science, University of Ljubljana\\ and Institute of Mathematics, Physics and Mechanics\\1000 Ljubljana\\ Slovenia}
\email[Ne\v za Mramor Kosta]{neza.mramor@fri.uni-lj.si}

\keywords{Lie sphere geometry, Lie form, cycles, projective subspace, determinant, projection}

\subjclass[2010]{51M04,51M15,15A63}

\abstract{
  In Lie sphere geometry, a cycle in $\RR^n$ is either a point or an oriented
  sphere or plane of codimension $1$, and it is represented by
  a point on a projective surface $\Omega\subset \PP^{n+2}$. The Lie product, a bilinear form on 
  the space of homogeneous coordinates $\RR^{n+3}$, provides an algebraic description of geometric properties of 
  cycles and their mutual position in $\RR^n$. In this paper we discuss 
  geometric objects which correspond to the intersection of
  $\Omega$ with projective subspaces of $\PP^{n+2}$. Examples of such
  objects are spheres and planes of codimension $2$ or more, cones and tori. 
  The algebraic framework which Lie geometry provides gives rise to simple and efficient
   computation of invariants of these objects, their properties and their mutual position in $\RR^n$. 
}
\maketitle
\noindent

\section{Introduction}
In his dissertation \cite{Lie} published  in 1872, Sophus Lie introduced his Lie 
geometry of oriented spheres which is based on a bijective 
correspondence between {\em oriented geometric cycles}, that is, planes and spheres of codimension $1$, 
and points on a quadric surface $\Omega$ in the projective space 
$\PP^{n+2}$.  Geometric relations like tangency,
angle of intersection, power, etc. are expressed in terms of the Lie product, a nondegenerate bilinear form on the  the space $\RR^{n+3}$
of homogeneous coordinate vectors. 
Lie geometry is an extension of the perhaps better known M\" obius geometry 
of unoriented cycles in $\RR^n$. Both Lie and M\" obius geometries provide 
an algebraic framework for computing geometric invariants of cycles and expressing their mutual position. This makes Lie
geometry an appropriate language for dealing with geometric
constructions on spheres and planes in $\RR^n$. It has been used to 
study a variety of geometric problems on circles and lines in the
plane and design algorithms for finding their solutions, for example in
\cite{Y}, \cite{R}, \cite{FS}. In \cite{BJZNMK1}, Lie geometry was used 
to analyze the existence and properties of solutions of geometric constructions 
associated to the Apollonius construction in $\RR^n$. In \cite{Kn} 
the special case of the Apollonius problem in the plane was considered. In
\cite{BJZNMK2}, simple algorithms for symbolic solutions of a number
of such geometric constructions were given. 
A thorough treatment of Lie geometry can be found in
\cite{Behnke} or \cite{Cecil}. 

In this paper we study geometric objects in $\RR^n$ which are obtained as 
intersections of projective subspaces of $\PP^{n+2}$ with the quadric $\Omega$. 
For example, the intersection of $\Omega$ with a $(k+1)$-dimensional 
projective subspace of $\PP^{n+2}$ spanned by $k$ cycles and
a special element 
$r\in\PP^{n+2}$ determines a $k$-parametric family of geometric cycles in 
$\RR^n$. If the spanning cycles correspond to intersecting geometric objects, this determines a {\em subcycle}, that is, a sphere 
or plane of codimension $k$ in $\RR^n$ which is the common intersection 
of all geometric cycles belonging to the family. Such a family is known
as a {\em Steiner family}. Similarly, the intersection of $\Omega$ with a 
$(k+1)$-dimensional projective subspace spanned by $k$ cycles and a second special 
element $w\in\PP^{n+2}$ determines a $k$-parametric {\em cone family}. If the spanning cycles have a common
tangent plane, this generates a cone in $\RR^n$ consisting of 
all points of tangency of the cycles of the family to the common tangent planes. 

Many geometric 
properties of these objects can be computed from simple, 
easily computable algebraic invariants of the corresponding
projective subspaces. Such an invariant is for example the Lie form restricted 
to the linear subspace of homogeneous coordinate vectors. First of 
all, its sign determines which projective subspaces have nonempty intersections 
with $\Omega$ and thus define geometric objects. Second, quotients of 
determinants, the so-called {\em discriminants}, are algebraic invariants of the obtained geometric objects
which reflect their geometric properties. Certain projective transformations (that is, linear transformations on the homogeneous coordinates), in particular Lie projections and Lie reflections, enable a simple, easily implementable computation of geometric characteristics.  

Following is a description of the main results of this paper. In section \ref{cikli} we give a short summary 
of Lie geometry of oriented geometric cycles where we refer the reader to \cite{BJZNMK1} or
(with minor changes in notation) to \cite{Cecil} for the missing proofs and details.  In section \ref{druzine} we
introduce general $s$-families, that is, families of cycles which are obtained as intersections of $\Omega$ with a projective
subspace of $\PP^{n+2}$ containing a distinguished cycle $s$. We then describe our main algebraic tools: determinants, Lie projections and Lie reflections. 
We focus on {\em hyperbolic} families, where the determinant of the corresponding subspace is 
negative, since these determine geometric objects of interest to us. Depending on a further cycle $s'$ we define the $s'$-discriminant of a hyperbolic $s$-family. 
The Lie projection $c$ of $s'$ onto the corresponding projective subspace has a special role in the family: it generates the cycle of the family 
with the minimal $s'$-discriminant. 
We show that the $w$-discriminant of a hyperbolic $r$-family gives the radius of the corresponding subcycle, the projection $c$ determines the cycle in the family 
with the minimal radius, and the planes of the family correspond to elements in the projection of the dual subspace $\ell=s'^\perp$. 
In the case of hyperbolic $w$-families, the $r$-discriminant gives the angle at the vertex of the cone, and $c$ determines the plane, 
orthogonal to the axis of the cone while its dual subspace determines the points of the family, in particular, in the case of a $1$-parametric cone family, the vertex of the cone.

In section \ref{mesan} we consider the mutual position of an
$s$-family and a cycle. We define the $s$-discriminant of the two
objects and show that its value coincides with the extreme value of the $s$-discriminant on pairs consisting of the given cycle and any cycle from the family and 
that it is achieved on the Lie projection of the given cycle onto the family. We prove that the $r$-discriminant of a cycle and a subcycle corresponds to the minimal 
angle of intersection (if it exists) or the maximal angle under which the given cycle is seen from the cycles of the family (if the intersection does not exist), 
and that this extreme value is achieved on the projection 
of the given cycle onto the family. On the other hand, the discriminant also gives the angle of the segment connecting the center of the given cycle and a point on the subcycle above the plane in $\RR^n$ in which the subcycle lies. The $w$-discriminant of a cycle and a cone corresponds to the tangential distance of the given cycle to the cone, which coincides with the minimal tangential distance between cycles of the cone family and the given cycle and is again achieved on the Lie projection of the cycle onto the family. 
 
In section \ref{dvaenaka} we generalize this to the case of two
families of the same dimension and define their $s$-discriminant.  We show that the value of the discriminant coincides with the extreme value of the $s$-discriminant on pairs consisting of a cycle from each family, and that it is achieved at the fixed points of a product of two Lie projections. Finally, we illustrate these results on the case of two subcycles and two cones in $\RR^3$: the sign of the $r$-discriminant of two subcycles determines whether they are linked or unlinked, and the fixed point pairs determine the cycles in the two $r$-families with the largest and the smallest angle, while the sign of the $w$-discriminant of a pair of cones determines whether their tangential distance exists and the fixed point pairs correspond to pairs of cycles from each $w$-family with minimal tangential distance. 

All constructions which appear in this paper as well as in \cite{BJZNMK1} and
\cite{BJZNMK2} have been implemented in a Mathematica package which can 
be found at {\tt http:matematika.fe.uni-lj.si/people/borut/Lie/}. The
examples in this paper have all been generated with this package.

\section{Cycles}\label{cikli}

Throughout this paper we will use the following convention: a lowercase letter will denote an element of a projective space, and the corresponding upper case letter its homogeneous coordinate vector.

We start with an informal geometric description of the 
M\" obius and Lie coordinates of cycles in $\RR^n$. 
In M\" obius geometry, unoriented cycles in $\RR^n$ are 
represented as points on or outside a quadric surface $Q$ in the projective space $\PP^{n+1}$ in the following way. The vector space $\RR^{n+2}$ of homogeneous coordinate vectors of points in $\PP^{n+1}$ is given the {\em Lorentz metric} or 
{\em M\" obius product},  an indefinite bilinear form of index 
$1$  which we denote by $(\, ,\, )$.  The quadric $Q$ consists of points $z\in \PP^{n+1}$ such that $(Z,Z)=0$. The quadric $Q$ divides the points of $\PP^{n+1}$ into three types: a point $z$ on $Q$ is {\em lightlike}, a point 
$z$ such that $(Z,Z)>0$ is {\em spacelike}, and a point $z$ such that $(Z,Z)<0$ is {\em timelike}. The cycles of $\RR^n$ are represented by lightlike and spacelike 
points of $\PP^{n+1}$ by identifying $\RR^n$ with $\{(\zeta_1,\ldots,\zeta_{n+1})\mid \zeta_{n+1}=0\}\subset\RR^{n+1}\subset\PP^{n+1}$ and $Q$ with the paraboloid $\zeta_{n+1}=-\frac{1}{2}\sum_{i=1}^n\zeta_i^2$ 
tangent to $\RR^n$ at the origin. 
Each geometric cycle $c$ in $\RR^n$ can be obtained by intersecting $Q$ with an $n$-plane $l \subset \RR^{n+1}$ and projecting the intersection to $\RR^n$. If $c$ is a point, that is, a sphere with radius $0$, then $l$ is tangent to $Q$ and $z$ is the point of tangency. If $c$ is a sphere or plane, then $z$ is the {\em polar point} of $l$, that is,  the common point of all tangent planes to $Q$ at the points of intersection $Q\cap l$, and $z$ is spacelike. If $c$ is a sphere then $z\in\RR^{n+1}\subset \PP^{n+1}$ and if $c$ is a plane then $z\in\PP^{n+1}\setminus\RR^{n+1}$.
Figure \ref{kvadrikal} shows  this construction in the case of a $0$-dimensional cycle in $\RR^1$: a $0$-sphere in $\RR^1$ consisting of two points, denoted by $a'$ and $b'$, is obtained by projecting the intersection $\{a,b\}$ of the quadric with a line in $\RR^2$, and $z\in\RR^{2}\subset \PP^2$ is the polar point of this line.

In Lie geometry, an additional dimension is added, that is, $\PP^{n+1}$ is embedded as a projective subspace into $\PP^{n+2}$. 
The M\" obius product is extended to the {\em Lie product}, an indefinite bilinear form of index $2$ on the space of homogeneous coordinate vectors $\RR^{n+3}$. The Lie product of vectors $X,Y\in \RR^{n+3}$ will be denoted by $(X\mid Y)$. Each oriented geometric cycle in $\RR^n$ is represented by a point 
on the Lie quadric $\Omega = \{x\in \PP^{n+2} \mid (X\mid X)=0\}$. The quadric $\Omega$ is a branched double cover over the spacelike and lightlike points of $\PP^{n+1}$ intersecting $\PP^{n+1}$ in the branching locus $Q$. A point of $\RR^n$ is represented by a single point $z\in Q\subset\Omega$, while a sphere or a plane is represented by  two 
points $z^+$ and $z^-$ on $\Omega$ lying above and below $z$, one for each orientation. 
Figure \ref{kvadrikar} shows the projection from 
the Lie quadric to the M\" obius space.

\begin{figure}\label{kvadrikal}
\centering
\includegraphics[scale=0.85]{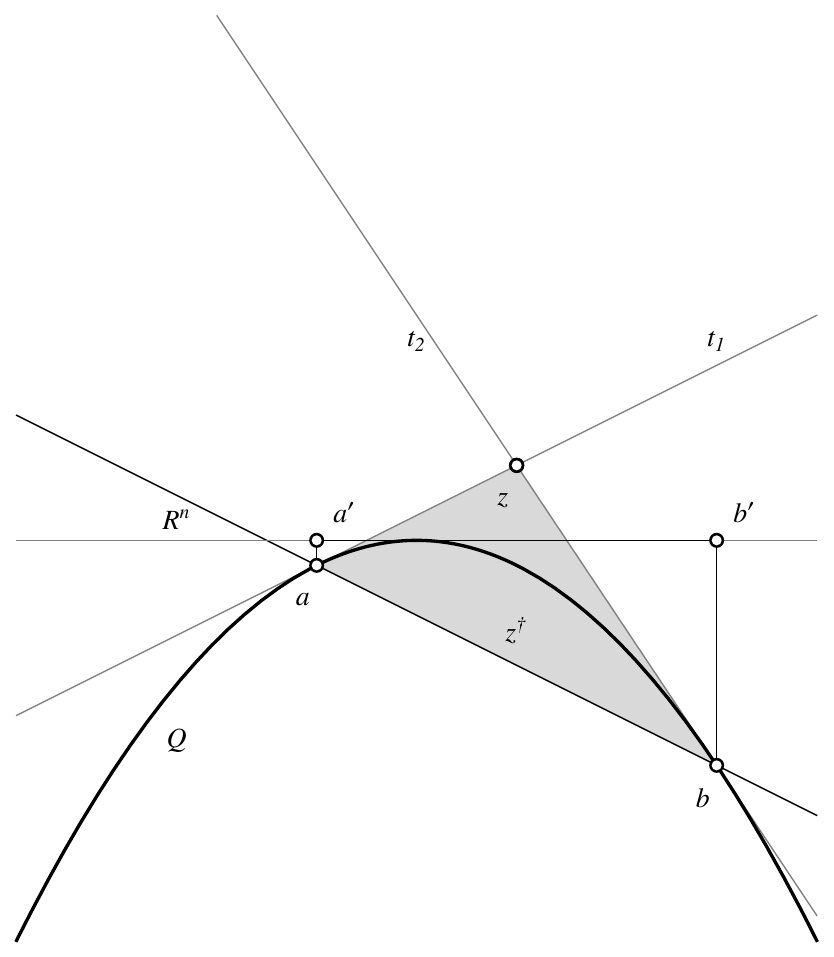}
\caption{In M\" obius geometry the $0$-dimensional
unoriented sphere in $\RR^1$ consisting of the points $a'$ and $b'$ is represented by the point $z$ in the plane 
$\RR^2$.  
The  $0$-sphere $\{a',b'\}$ is the projection onto $\RR^1$ of the points $\{a,b\}$ on the 
M\" obius  quadric $Q$ and $z$ is the polar point to the line through the points $a$ and $b$.
} 
\end{figure}

\begin{figure}\label{kvadrikar}
\centering
\includegraphics[scale=0.6]{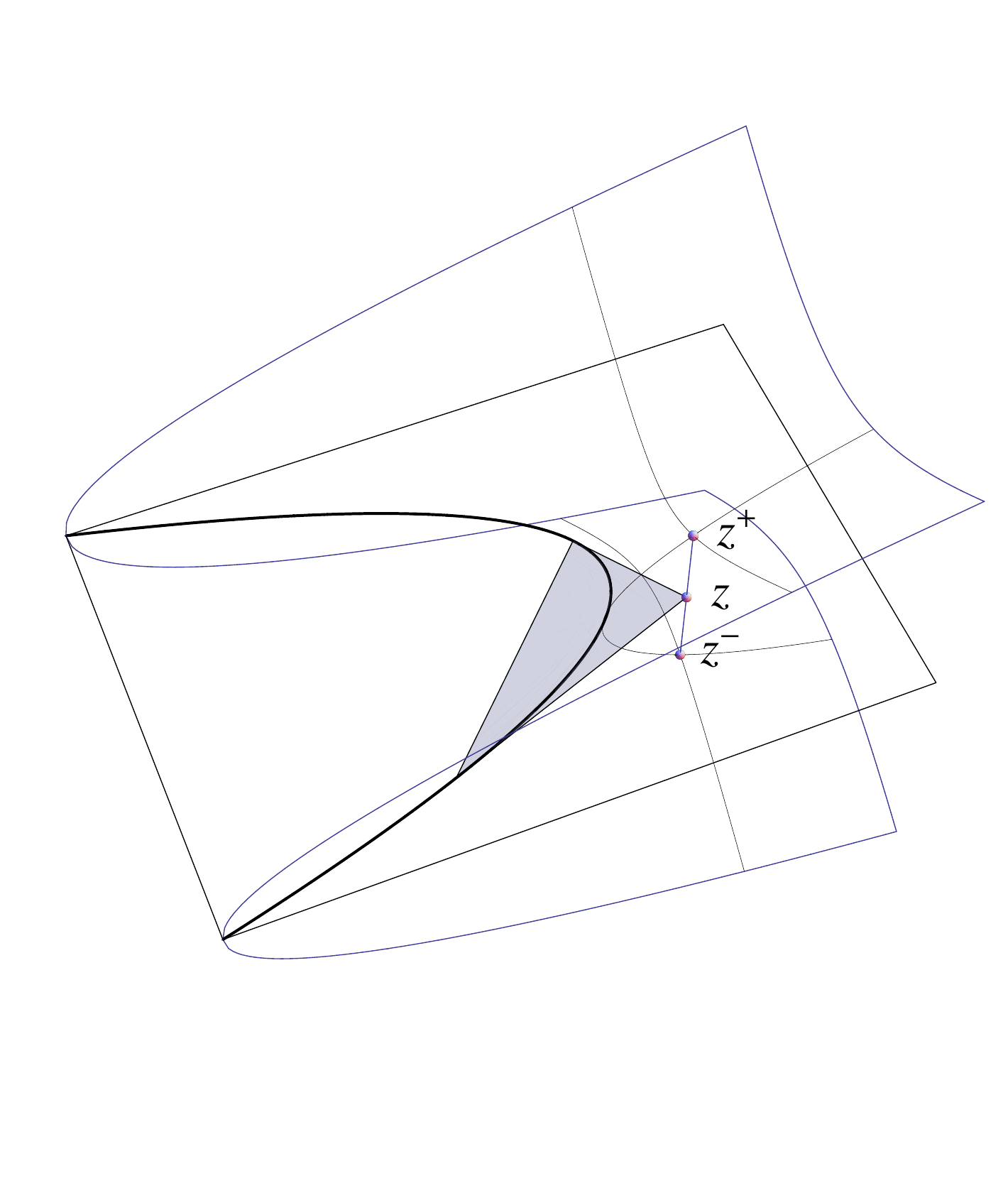}
\caption{In Lie geometry the points on and outside the M\" obius quadric in $\PP^{n+1}$ are covered by the Lie quadric $\Omega\subset \PP^{n+2}$. A spacelike point $z\in \PP^{n+1}$ which represents an unoriented sphere in M\" obius geometry is covered by the two points $z^-,z^+\in\PP^{n+2}$ representing the sphere with both orientations.}
\end{figure}

In M\" obius geometry $(X,Y)=0$ if and only if the two geometric cycles 
represented by $x$ and $y$ intersect orthogonally. 
More precisely, for any two spacelike points 
$x,y\in \PP^{n+1}$ 
the M\" obius product in suitable homogeneous coordinates gives the angle of intersection of the two cycles they represent. 
In Lie geometry the equation $(X\mid Y)=0$ describes oriented contact, that is, 
$(X\mid Y)=0$ if and only if the two oriented geometric cycles $x,y$ are tangent 
with compatible
orientations in the points of tangency. Because of this, Lie geometry is particularly suited 
for dealing with configurations of geometric cycles with certain tangency requirements, since such 
requirements are encoded by the simple linear equation $(X\mid Y)=0$. 

In order to give a precise description of the correspondence between oriented 
geometric cycles and points on $\Omega$ which can be used for specific algorithms and 
computations, it is necessary to introduce local coordinate charts on 
$\PP^{n+2}$.

We will call an
element $x\in \PP^{n+2}$ (denoted by a lowercase letter) an {\em
  algebraic cycle} (or, mostly, just a cycle). An algebraic cycle $x$
is given by a nonzero vector of homogeneous coordinates
$X \in \RR^{n+3}$ which is determined up to a scalar factor and which we denote by the
corresponding uppercase letter. 

The {\em Lie product} on $\RR^{n+3}$ is a nondegenerate bilinear form 
of index $2$ given by
$$
	(X\mid Y)=X^T\,\mathbf{A}\,Y,\qquad 
	\mathbf{A}=\begin{bmatrix}
		0 & \mathbf{0}&1& 0 \\
		\mathbf{0}& \mathbf{I}^{n}& \mathbf{0}&\mathbf{0} \\
		1 & \mathbf{0} & 0 & 0\\
		0 & \mathbf{0} & 0&-1 
	\end{bmatrix}
$$
where $\mathbf{I}^n$ denotes the $n\times n$ identity matrix. 
In coordinates, let $X=(\xi_0,\boldsymbol{\xi}_1,\xi_{2},\xi_3)$ and 
$Y=(\eta_0,\boldsymbol{\eta}_1,\eta_2,\eta_3)$ where 
$\xi_i,\eta_i\in\RR$ for $i=0,2,3$ and 
$\boldsymbol{\xi}_1,\boldsymbol{\eta}_1\in \RR^n$, then
 $$(X\mid Y)=\xi_0\eta_{2} + \boldsymbol{\xi}_1\cdot
\boldsymbol{\eta}_1+\xi_2\eta_0-\xi_3\eta_3.$$
 The vectors $X$ such that $(X\mid X)=0$ form the {\em Lie
  quadric}
$$\Omega:= \{x\in \PP^{n+2}\mid (X\mid X) = 0\}\subset \PP^{n+2}.$$
Cycles $x\in\Omega$ will be called {\em proper cycles}, while cycles
$x\notin\Omega$ will be called {\em non proper cycles}.

If $\mathbf{X}=(X_1,\ldots,X_k)$ denotes a list of homogeneous coordinate vectors, the symbol
$\langle\mathbf{X}\rangle=\langle X_1,\ldots,X_k\rangle$ will stand for the linear subspace
spanned by the vectors $(X_1,\ldots,X_k)\in \RR^{n+3}$, and the symbol
$\langle\mathbf{X}\rangle^\perp=\langle X_1,\ldots,X_k\rangle^\perp$ for the orthogonal\label{orthogonal_complement} 
complement to
$\langle\mathbf{X}\rangle$ with respect to the Lie product, i.e.
$$\langle \mathbf{X}\rangle^\perp = \{ Y\mid (X_i\mid Y)=0, i=1,\ldots,k\}.$$
Following our convention on uppercase and lowercase letters, 
$\langle\mathbf{x}\rangle=\langle x_1,\ldots,x_k\rangle$ and $\langle\mathbf{x}\rangle^\perp=\left< x_1,\ldots,x_k\right>^\perp$ will denote the projective subspace spanned by $\mathbf{x}$ and its dual 
projective subspace, respectively. 
For any nonzero vector $S\in \RR^{n+3}$, the open set $\mathcal{U}_s =
\PP^{n+2}\setminus \left<s\right>^\perp$ together with the map
\begin{equation}\label{map}
\varphi_S\:\mathcal{U}_s \to \RR^{n+3}, \quad
\varphi_S(x) := \frac{1}{(X\mid S)} \, X,
\end{equation}
where $X$ is any vector of homogeneous coordinates of $x$, 
with image in $\{X\mid (X\mid S)=1\}\cong \RR^{n+2}$, 
is a chart on $\PP^{n+2}$ specifying  local coordinates in 
$\mathcal{U}_s$. The collection
$$\{(\mathcal{U}_s,\varphi_S)\mid S\neq 0\in\RR^{n+3}\}$$
gives the standard manifold structure on $\PP^{n+2}$.  

Two
cycles and their corresponding charts have a special role in Lie
geometry: the non proper cycle $r$ with homogeneous coordinates
$R=(0,\mathbf{0},0,1)$ and the proper cycle $w$ with homogeneous coordinates
$W=(1,\mathbf{0},0,0)$. The reason for this is that the equation 
$(X\mid X)=0$ implies that either $\xi_2\neq 0$ and $X\in\mathcal{U}_w$, or $\xi_3\neq 0$ and $X\in\mathcal{U}_r$, 
so 
$$\Omega\subset \mathcal{U}_w\cup\mathcal{U}_r.$$

An oriented geometric cycle in $\RR^n$ is represented by a proper
algebraic cycle $x\in \Omega \subset \PP^{n+2}$ in the following way.
\begin{itemize}
\item The positively oriented (i.e. inward normal) and negatively
oriented sphere with center  $\pp$ and  radius $\rho$ are
represented by the cycles $x$ and $x'$ in $\mathcal{U}_w\cap
\mathcal{U}_r$ with local coordinates
\begin{equation}\label{lk}
\varphi_W(x) = \left(\nu,\pp,1,\rho\right), 
\quad \varphi_R(x) = 
\left(-\frac{\nu}{\rho},-\frac{\pp}{\rho},-\frac{1}{\rho},-1\right)
\end{equation}
\begin{equation}\label{lk1}\varphi_W(x') = \left(\nu,\pp,1,-\rho\right),\quad 
\varphi_R(x') = \left(\frac{\nu}{\rho},\frac{\pp}{\rho},\frac{1}{\rho},-1
\right),\end{equation}
respectively, where $\nu=\frac{1}{2}(\rho^2-\|\pp\|^2)$ and $\rho>0$.

\item A point $\pp\in \RR^n$, i.e. a sphere with radius $0$, is represented 
by the cycle $x\in \langle r\rangle^\perp\subset \mathcal{U}_w$ with local coordinates 
$\varphi_W(x)= (\nu,\pp,1,0)$, where $\nu=-\|p\|^2/2$.

\item A plane with normal $\nn$, where $\|\nn\|=1$, containing the point 
$\qq$ is represented by the cycle $x\in \langle w\rangle^\perp\cap U_r$ with local coordinates
$$\varphi_R(x)=(\nn \cdot \qq,-\nn,0,-1).$$
\end{itemize} 

In the opposite direction, every proper cycle $x\neq w$ represents an oriented geometric cycle in
$\RR^n$. A proper cycle in $\langle w\rangle^\perp \subset \mathcal{U}_r$
represents a  plane, while a proper cycle in $\langle r\rangle^\perp\subset \mathcal{U}_w$
represents a point
in $\RR^n$.  The only exception is $w\in \langle w\rangle^\perp \cap \langle r\rangle^\perp$  which does not represent any geometric cycle. A change of sign
of the last homogeneous coordinate of a cycle $x\in \Omega$, produces
the {\em reoriented cycle} $x'\in \Omega$ representing the same
geometric cycle and an oriented cycle with the opposite orientation. Points have
no orientation: if $x$ is a point, then $x'=x$.

\begin{remark} Spheres and planes in $\RR^n$ correspond through the 
stereographic projection to codimension $1$ spheres on the sphere
$S^{n}$ and, in this setting, the cycle $w$ is the representation of
the pole in $S^{n}$.
\end{remark}

The Lie product computed in different charts reflects different
geometric properties of the corresponding pair of cycles. Following
are some specific cases.

\begin{enumerate}
\item Let $x_1$ and $x_2$ be proper cycles representing geometric cycles $c_1$ and $c_2$ such that $(X_1\mid X_2)=0$. If
one of the cycles, for example $c_1$, is a point, then it lies
on $c_2$. If both are non-point cycles, then $c_1$ and $c_2$ are tangent with compatible orientation. If $c_1$ and $c_2$ are both planes, then they 
are parallel with compatible orientation.  The
proof of this fact amounts to simple geometric verifications and
can be found for example in  \cite{Cecil}.

\item Let $x_1,x_2\in \mathcal{U}_r\cap \Omega$ be two non-point cycles 
representing intersecting geometric cycles $c_1$ and $c_2$. The Lie product  
\begin{equation}\label{cikli1}
(\varphi_R(x_1)\mid \varphi_R(x_2))=
-\cos\alpha-1,\end{equation}
where $\alpha$ is the angle of intersection (figure \ref{kot}, left). If both cycles are spheres
this follows from the law of cosines since  
$$(\varphi_R(x_1)\mid\varphi_R(x_2))=
\frac{-\|\mathbf{p}_1-\mathbf{p}_2\|^2+\rho_1^2+\rho_2^2-2\rho_1\rho_2}{2\rho_1\rho_2}=
-\cos\alpha-1.$$
If one cycle is a plane and one is a sphere, then  
$$(\varphi_R(x_1)\mid\varphi_R(x_2))=
\left(\varphi_R(x_1)\mid\frac{\varphi_W(x_2)}{\rho_2}\right)=
\frac{\mathbf{n}_1\cdot(\mathbf{p}_2-\mathbf{q}_1)-\rho_2}{\rho_2}=
-\cos\alpha-1,$$
where $\mathbf{q}_1$ is a point in the intersection. And finally, if both
cycles are planes, again, 
$$
(\varphi_R(x_1)\mid\varphi_R(x_2))=\mathbf{n}_1\mathbf{n}_2-1 =
-\cos\alpha-1.
$$ 
If the cycles $c_1$ and $c_2$ do not intersect, 
the Lie product is associated with the
Lorentz boost $\chi$.  
\begin{equation}\label{cikli1a}
(\phi_R(x_1)\mid\phi_R(x_2))=\pm\cosh\chi-1\quad\text{and}\quad
\cosh\chi=\left|\frac{\rho_1'^2+\rho_2'^2}{2\rho_1'\rho_2'}\right|,
\end{equation}
%
%
where $\rho_1'$ and $\rho_2'$ are the radii of the corresponding
concentric cycles (see figure \ref{kot} right) 
with the same product \eqref{cikli1a}.

\begin{figure}[h!]
\centering
\includegraphics[scale=0.85]{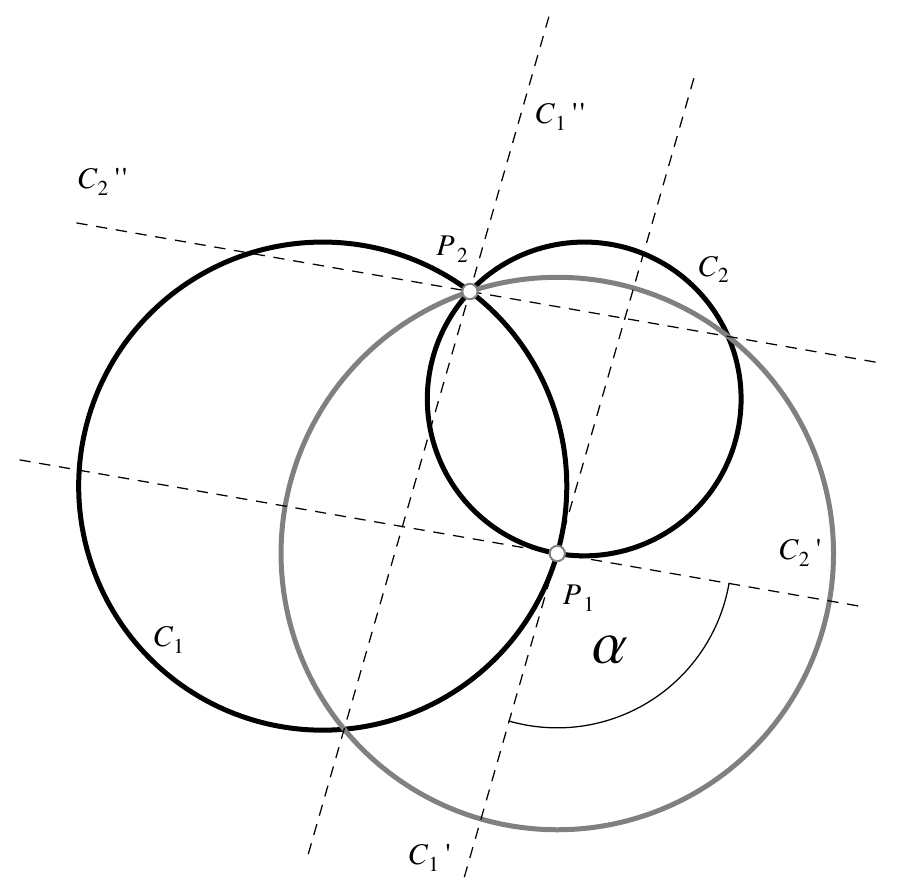}\hfill
\includegraphics[scale=0.85]{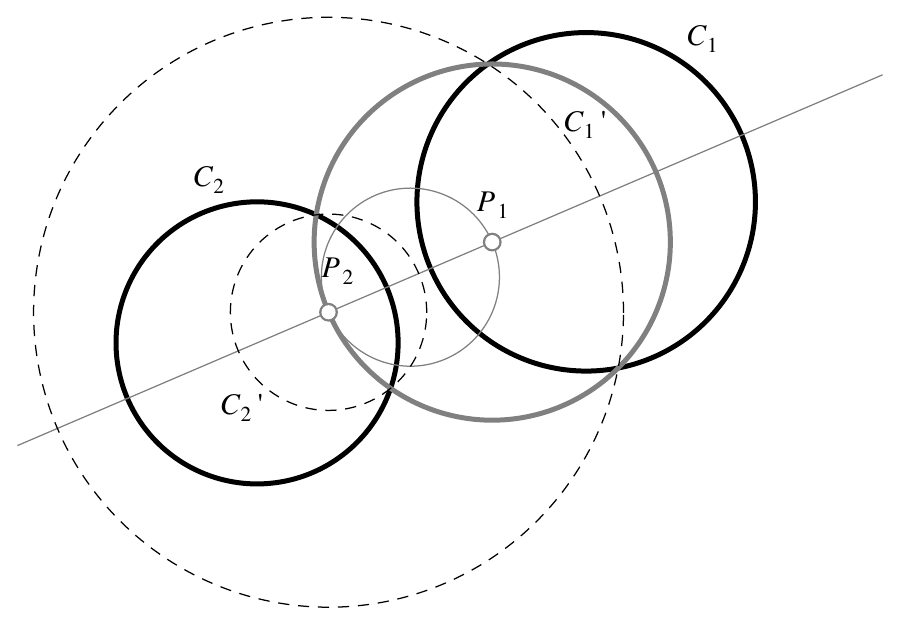}
\caption{Geometric interpretation of the Lie product in the chart $\mathcal{U}_r$ in the case of intersecting (left, equation \eqref{cikli1}), and nonintersecting (right, equation \eqref{cikli1a}) cycles.}\label{kot}
\end{figure}

\item Let $x_1,x_2\in \mathcal{U}_w\cap\Omega$ be non-point cycles representing geometric cycles $c_1$ and $c_2$ with a common tangent plane, i.e.  with 
$(\rho_1-\rho_2)^2-\|\mathbf{p}_1-\mathbf{p}_2\|^2\leq 0$ 
(where $\rho_i$ can be negative, depending on the orientation). Then 
\begin{equation}\label{cikli2}
(\varphi_W(x_1)\mid \varphi_W(x_2))=
\frac{(\rho_1-\rho_2)^2-\|\mathbf{p}_1-\mathbf{p}_2\|^2}{2}=-\frac{d^2}{2},\end{equation}
where $d$ is the tangential distance (figure \ref{potenca}, left).

\begin{figure}[h!]
\centering
\includegraphics[scale=0.85]{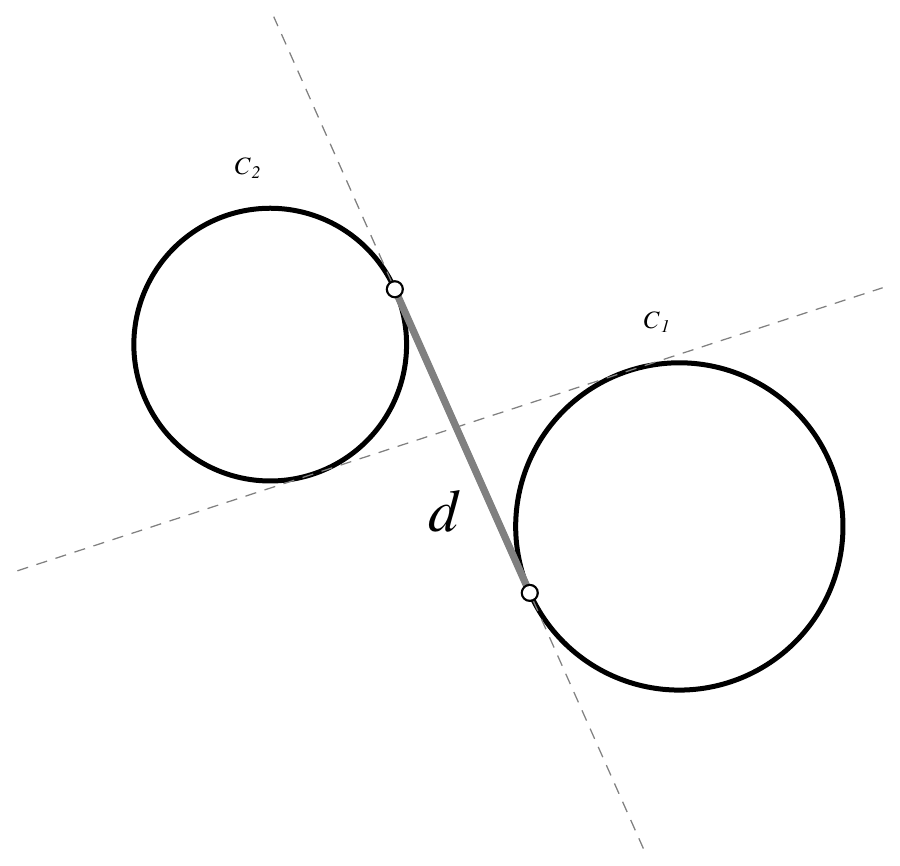}\hfill
\includegraphics[scale=0.85]{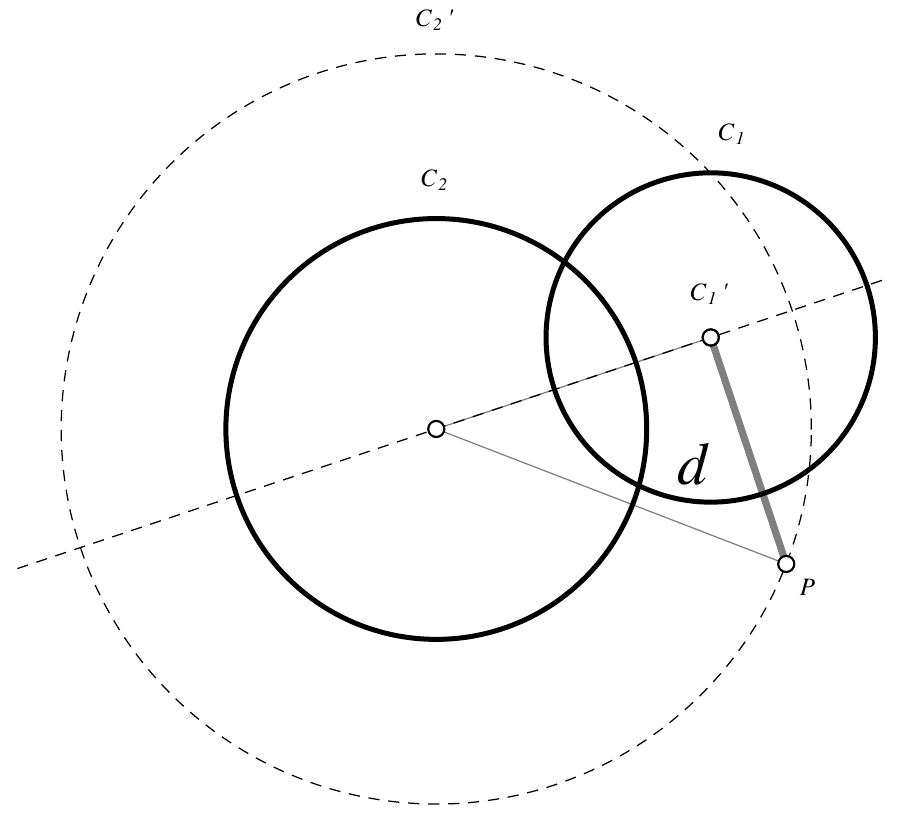}
\caption{Geometric interpretation of the Lie product in the chart $\mathcal{U}_w$. On the left, the two cycles have a common tangent plane (equation \eqref{cikli2}),  and have opposite orientations so $(\rho_1-\rho_2)$ corresponds to the sum of the two radii and $d$ is the tangential distance. On the right, the cycles do not have a common tangent plane (equation \eqref{ex}) and $d$ is the half chord.}\label{potenca}
\end{figure}
\noindent If the cycles do not have a common tangent plane, then the product
\eqref{cikli2} is positive. If $c_1$ is a point, then it lies inside the sphere $c_2$ and  equation (\ref{cikli2}) is
\begin{equation*}
(\varphi_W(x_1)\mid \varphi_W(x_2))=\frac{\rho_2^2-\|\mathbf{p}_1-\mathbf{p}_2\|^2}{2}=\frac{d^2}{2},
\end{equation*}
where $d$ is the half chord of $c_2$ through $c_1$. If both $c_1$ and $c_2$ are spheres, 
then equation (\ref{cikli2}) gives 
\begin{equation}\label{ex}(\varphi_W(x_1)\mid \varphi_W(x_2))=
\frac{(\rho_1-\rho_2)^2-\|\mathbf{p}_1-\mathbf{p}_2\|^2}{2}=\frac{d^2}{2}\end{equation} 
where $d$ is 
the half chord of the circle concentric to $c_2$ with radius $|\rho_1|+|\rho_2|$ through the center of $c_1$. 
A proof of this is a nice application of Lie reflections and is given later  in corollary \ref{poten}.
\end{enumerate}

\section{Projective subspaces and families of cycles}\label{druzine}
\subsection{Families and s-families of cycles}
In this section we will consider families of proper cycles arising from projective 
subspaces in $\PP^{n+2}$ and 
geometric objects corresponding to them.

Let $\mathbf{x}=(x_1,\ldots,x_{k+1})$, $2\le k \le n$, denote a list of cycles spanning the
subspace $\langle\mathbf{x}\rangle\subset \PP^{n+2}$. 
If the intersection $\langle\mathbf{x}\rangle\cap\Omega$ 
is nonempty it is a {\em family} of proper cycles. 
The dual algebraic object $\langle\mathbf{x}\rangle^\perp\cap\Omega$ will 
be called the \emph{cofamily}. 
On the geometric side the cofamily contains all oriented geometric cycles which 
are tangent to geometric cycles corresponding to all $x\in\mathbf{x}$. 

Typically we will consider lists of the type $(\mathbf{x},s)=(x_1,\ldots,x_k,s)$, where $x_i$ are 
proper cycles and $s$ is $r$, $w$ or possibly some other special cycle with 
with $(S\mid S)\le 0$. 
A family $\langle\mathbf{x},s\rangle$ of this type will be called an $s$-{\em family} 
and the corresponding cofamily will be an $s$-{\em cofamily}.
For example, an $r$-cofamily consists of points, and a $w$-cofamily consists of planes.  
If $k=2$, an $s$-family is usually called a \emph{pencil} of cycles. Following 
the standard terminology for pencils, we will call an $r$-family a
{\em Steiner family} and a $w$-family a {\em cone family}. 

\begin{figure}[h!]
\includegraphics[scale=0.58]{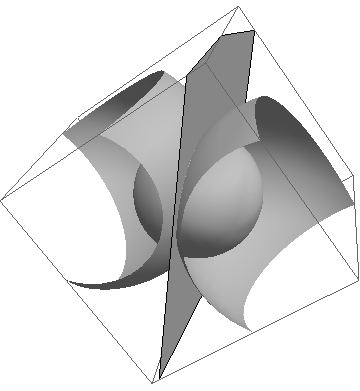}\hfill
\includegraphics[scale=0.58]{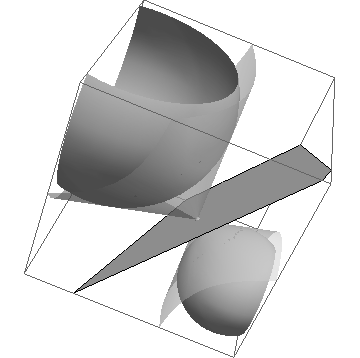}
\caption{A Steiner and a cone pencil in three dimensions}\label{steiner-cone-23}
\end{figure}

Let us take a closer look at geometric pencils in $\RR^3$ (see also \cite{Y} and \cite{BJZNMK1}) . 
A Steiner pencil is determined by two intersecting cycles  
and consists of all cycles containing the intersection circle or line
of these two cycles, so the geometric object it represents is the intersecting circle. 
The cycles of the copencil are the points of this circle. 
A cone pencil is determined by two oriented spheres with 
a common tangent plane.
The cycles of the copencil are the planes tangent (with compatible orientations) to both spheres.  
The envelope of these planes is a double cone tangent to all the
spheres in the pencil with vertex in the common intersection point of the tangent planes, so the geometric
object represented by a cone family is an oriented double cone (which collapses to a line if both spanning cycles are points, or
extends to a cylinder if the two spanning cycles are
spheres with equal radii). 
Another interesting geometric pencil is obtained if the special cycle $s$ is given by $S=\rho W+R$. Then an $s$-pencil spanned by two oriented spheres consists of all spheres with a common
tangent sphere of radius $\rho$. The cycles of the copencil are all spheres 
of radius $\rho$, tangent to both given spheres. The envelope of these spheres forms a torus, so the new geometric object represented by such a pencil is an oriented torus or, if the two spanning cycles are planes, a cylinder. 

\subsection{The determinant of a family}
Consider the Gram matrix with elements the Lie products of vectors from a list of linearly independent vectors $\mathbf{X}=(X_1,\ldots,X_k)$,
\begin{equation}\label{restricted}
\mathbf{A}_{\mathbf{X}}=[\mathbf{X}]^T \mathbf{A} [\mathbf{X}]=\left[\begin{array}{ccc} (X_1\mid X_1) & \cdots & (X_1\mid X_k)\\
\vdots & \ddots & \vdots\\
(X_k\mid X_1) & \cdots & (X_k\mid X_k)\end{array}\right].
\end{equation}
Its determinant 
$\Delta(\mathbf{X})=\det \mathbf{A}_\mathbf{X}$ 
can be positive, negative or even zero, due to the indefiniteness of the Lie product. The sign of the determinant $\Delta(\mathbf{X})$  
is an invariant of the projective subspace 
$\langle\mathbf{x}\rangle$ and consequently an invariant of 
the underlying family $\langle\mathbf{x}\rangle\cap\Omega$.  
It depends on the index 
of the Lie form restricted to the subspace $\langle\mathbf{X}\rangle$. 
The following proposition is a standard result in linear algebra, for example, it is an immediate consequence of 
\cite[Theorem 1.2]{Cecil}.

\begin{proposition}\label{signature-1}
\begin{enumerate}
\item If the Lie form on  
the subspace $\langle\mathbf{X}\rangle$ is 
nondegenerate then it is nondegenerate also on the Lie
orthogonal complement  $\langle\mathbf{X}\rangle^\perp$, and 
$
	\RR^{n+3}=\langle\mathbf{X}\rangle\oplus\langle\mathbf{X}\rangle^\perp.
$ 
\item If $\Delta(\mathbf{X})<0$, then the index of the Lie form on the
subspace $\langle \mathbf{X}\rangle$ is $1$.
\end{enumerate}
\end{proposition}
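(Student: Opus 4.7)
The plan is to treat the two parts separately, with part (1) being a standard fact about nondegenerate bilinear forms restricted to subspaces, and part (2) following from Sylvester's law of inertia once one checks the signature of the ambient Lie form. Throughout, write $V=\langle\mathbf{X}\rangle$, and recall that the Lie form on $\RR^{n+3}$ is nondegenerate (since $\det\mathbf{A}\neq 0$), so for any subspace $V\subseteq\RR^{n+3}$ one has $\dim V+\dim V^\perp=n+3$ and $V^{\perp\perp}=V$.

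For part (1), the key observation is that the radical of the restricted form on $V$ is exactly $V\cap V^\perp$. Nondegeneracy on $V$ therefore means $V\cap V^\perp=\{0\}$. Combined with the dimension formula $\dim V+\dim V^\perp=n+3$ this immediately gives $V\oplus V^\perp=\RR^{n+3}$. For the nondegeneracy of the restriction to $V^\perp$, apply the symmetric argument: the radical of the form on $V^\perp$ is $V^\perp\cap V^{\perp\perp}=V^\perp\cap V=\{0\}$.

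For part (2), first compute the signature of the ambient Lie form: the matrix $\mathbf{A}$ decomposes as a block sum consisting of the $2\times 2$ hyperbolic block $\bigl(\begin{smallmatrix} 0&1\\1&0\end{smallmatrix}\bigr)$ (eigenvalues $\pm 1$), the identity $\mathbf{I}^n$ (eigenvalues $+1$), and the $1\times 1$ block $[-1]$, giving signature $(n+1,2)$, i.e.\ index $2$. Next, observe that $\operatorname{sign}\Delta(\mathbf{X})$ is a genuine invariant of $V$: a change of basis $\mathbf{X}'=\mathbf{X}M$ yields $\mathbf{A}_{\mathbf{X}'}=M^T\mathbf{A}_{\mathbf{X}}M$, hence $\det\mathbf{A}_{\mathbf{X}'}=(\det M)^2\det\mathbf{A}_{\mathbf{X}}$. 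By Sylvester's law of inertia, if the restricted form has $j$ negative eigenvalues, then $\operatorname{sign}\Delta(\mathbf{X})=(-1)^j$; moreover the index of a restriction cannot exceed the ambient index, so $j\leq 2$. The hypothesis $\Delta(\mathbf{X})<0$ forces $j$ odd, and combined with $j\leq 2$ this yields $j=1$.

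There is no real obstacle here, since the statement is a direct packaging of standard linear-algebra facts; the only thing to be careful about is justifying that "the index of the form on $V$" is well-defined and bounded by $2$, which is precisely Sylvester's law applied together with the signature computation of $\mathbf{A}$.
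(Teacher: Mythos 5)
Your proof is correct. Note, however, that the paper does not actually prove this proposition: it simply declares it a standard result of linear algebra and cites it as an immediate consequence of Theorem~1.2 in Cecil's book, so there is no argument in the text to compare yours against. What you have written is the standard derivation that the citation leaves implicit --- part (1) from the identification of the radical of the restriction with $V\cap V^\perp$ together with the dimension formula $\dim V+\dim V^\perp=n+3$ and $V^{\perp\perp}=V$ for a nondegenerate ambient form, and part (2) from Sylvester's law combined with the computation that $\mathbf{A}$ has signature $(n+1,2)$, the basis-invariance of $\operatorname{sign}\Delta(\mathbf{X})$, and the bound that the index of a restriction cannot exceed the ambient index $2$. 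All of these steps are sound, so your write-up is a legitimate self-contained replacement for the paper's external reference.
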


The sign of $\Delta(\mathbf{X})$ determines the position of the 
projective subspace $\langle \mathbf{x}\rangle$ 
and its Lie orthogonal complement with respect to the Lie quadric $\Omega$, as the following theorem shows.

\begin{theorem}\label{proj-family}\ \\
	Let $\mathbf{X}=(X_1,\dots X_k)$, where $2\le k \le n+1$ be  
	linearly independent vectors from $\mathbb{R}^{n+3}$.
	\begin{enumerate}
	\item If $\Delta(\mathbf{X})<0$, then both $\langle\mathbf{x}\rangle$ and its 
	Lie orthogonal complement $\langle\mathbf{x}\rangle^\perp$ intersect $\Omega$.
	\item If $\Delta(\mathbf{X})=0$, then 
	$\langle\mathbf{x}\rangle\cap\langle\mathbf{x}\rangle^\perp\ne\emptyset\subset\Omega$. 
	\item If $\Delta(\mathbf{X})>0$, then exactly one of $\langle\mathbf{x}\rangle$ and 
	$\langle\mathbf{x}\rangle^\perp$ does not intersect $\Omega$. 
	\end{enumerate}
\end{theorem}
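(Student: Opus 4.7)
The engine of the proof is Sylvester's law of inertia, together with the observation that a subspace $V\subset\RR^{n+3}$ meets $\Omega$ in a nontrivial null vector if and only if the restriction of the Lie form to $V$ is not (positive or negative) definite. Since the ambient Lie form has signature $(n+1,2)$, the possible restricted signatures are very constrained, and this is what turns the sign of $\Delta(\mathbf X)$ into an intersection statement. I would organise the three cases around whether the restricted form is nondegenerate, and in the nondegenerate cases around its index.

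For parts (1) and (3), I would first invoke Proposition~\ref{signature-1}(1) to get the orthogonal decomposition $\RR^{n+3}=\langle\mathbf X\rangle\oplus\langle\mathbf X\rangle^\perp$, so that by Sylvester's law the signatures on the two summands add to $(n+1,2)$. For part (1), Proposition~\ref{signature-1}(2) gives that the restriction to $\langle\mathbf X\rangle$ has index $1$, i.e.\ signature $(k-1,1)$; this is indefinite, so $\langle\mathbf x\rangle$ contains a null line and meets $\Omega$. By the signature-additivity, $\langle\mathbf X\rangle^\perp$ has signature $(n+2-k,1)$, which is again indefinite (using $k\le n+1$), so $\langle\mathbf x\rangle^\perp$ also meets $\Omega$.

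For part (2), the identity $\Delta(\mathbf X)=\det \mathbf A_{\mathbf X}=0$ says the Gram matrix is singular, so there is a nonzero combination $Y=\sum\lambda_iX_i\in\langle\mathbf X\rangle$ with $(X_i\mid Y)=0$ for every $i$. Then $Y\in\langle\mathbf X\rangle\cap\langle\mathbf X\rangle^\perp$, and since $Y\perp\langle\mathbf X\rangle$ while $Y\in\langle\mathbf X\rangle$ we immediately get $(Y\mid Y)=0$, putting $y\in\Omega$ on the intersection; this is exactly the assertion in (2). For part (3), $\Delta(\mathbf X)>0$ forces the restricted form to have even index, hence $0$ or $2$. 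In the first situation the form on $\langle\mathbf X\rangle$ is positive definite so $\langle\mathbf x\rangle\cap\Omega=\emptyset$, and by signature-additivity the complement has signature $(n+1-k,2)$, which is indefinite and therefore meets $\Omega$. The opposite case, index $2$, is handled symmetrically: $\langle\mathbf X\rangle$ has signature $(k-2,2)$ (indefinite), while $\langle\mathbf X\rangle^\perp$ acquires signature $(n+3-k,0)$ and is positive definite, missing $\Omega$.

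I expect the main obstacle to be the bookkeeping in part (3), specifically checking that the ``definite'' signature always lands on exactly one of the two summands and that the other summand is genuinely indefinite — the latter needs one to exclude the degenerate boundary situations where one of the summands could become two-dimensional and of signature $(0,2)$. For this I would appeal to the hypothesis $2\le k\le n+1$ to guarantee that in each of the allowed signatures at least one positive direction survives in the indefinite summand, and then rely on Sylvester's law to force the dichotomy claimed.
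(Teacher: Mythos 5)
Your strategy is the one the paper itself uses: a signature analysis of the restricted Lie form. Your parts (1) and (2) coincide with the paper's proof --- the paper phrases (1) via an explicit Lie-orthogonal basis $Y_1,\dots,Y_k$ of $\langle\mathbf X\rangle$ with $(Y_i\mid Y_i)>0$ for $i<k$ and $(Y_k\mid Y_k)<0$, and observes that the projective line $\langle y_i,y_k\rangle$ meets $\Omega$, which is exactly your ``indefinite $\Rightarrow$ contains a null line'' step combined with signature additivity on the orthogonal decomposition; your part (2) is verbatim the paper's.

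The gap is in part (3), and it is precisely the boundary situation you flagged and then claimed to dispose of. The hypothesis $2\le k\le n+1$ does \emph{not} exclude the configuration in which $\langle\mathbf X\rangle$ has signature $(0,2)$: take $k=2$, $X_1=(1,\mathbf 0,-1,0)$ and $X_2=R=(0,\mathbf 0,0,1)$. Then $(X_1\mid X_1)=-2$, $(X_2\mid X_2)=-1$, $(X_1\mid X_2)=0$, so $\Delta(\mathbf X)=2>0$ and $\langle\mathbf X\rangle$ is negative definite, while $\langle\mathbf X\rangle^\perp$ has signature $(n+1,0)$ and is positive definite; \emph{neither} subspace meets $\Omega$, so there is no ``indefinite summand'' in which a positive direction could survive. (Dually, the same configuration occurs at $k=n+1$ with the roles of the two summands exchanged.) Thus your symmetric treatment of the index-$2$ case fails at $k=2$, and the index-$0$ case fails at $k=n+1$. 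Be aware that the paper's own proof of (3) makes the identical unjustified assertion --- it simply declares that the index-$2$ side intersects $\Omega$ --- so the lacuna is shared, and statement (3) is in fact false at this boundary as literally written; in the paper's applications the problem never surfaces because there the spanning vectors are proper cycles, which forces the span to contain null vectors and rules out definiteness. A clean repair is either to assume $\langle\mathbf x\rangle\cap\Omega\ne\emptyset$ (or that the $x_i$ are proper), or to weaken the conclusion of (3) to ``at least one of the two subspaces does not intersect $\Omega$.''
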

\begin{proof}
	\begin{enumerate}
		\item If $\Delta(\mathbf{X})<0$
		the restriction of the Lie form to $\langle\mathbf{X}\rangle$ is
		nondegenerate and has index $1$, so there exists a Lie orthogonal basis of 
		$\langle\mathbf{X}\rangle$
		formed by vectors $Y_i$ such that  $(Y_i\mid Y_i)>0$, $i=1,\dots,k-1$ and 
		$(Y_k\mid Y_k)<0$, (compare \cite[Theorem 1.2]{Cecil}). 
		So the projective line $\langle y_i,y_k\rangle$, $1\le i<k$, intersects the 
		quadric $\Omega$. The same is true for Lie orthogonal complement 
		$\langle\mathbf{X}\rangle^\perp$. 
		\item If  $\Delta(\mathbf{X})=0$, the Lie form is degenerate 
		on the subspace  
		$\langle \mathbf{X} \rangle$ and there exists a vector $X$
		Lie orthogonal to all vectors of the subspace including
		itself, so 
		$x\in\langle\mathbf{x}\rangle^\perp\cap\langle\mathbf{x}\rangle\subset\Omega$.   
		\item Finally, in the case $\Delta(\mathbf{X})>0$ we have two possibilities. 
		Either the index of the Lie form is $2$  on 
		$\langle\mathbf{X}\rangle$ and $0$ on $\langle\mathbf{X}\rangle^\perp$ and 
		$\langle\mathbf{X}\rangle$ intersects $\Omega$ 
		while $\langle\mathbf{X}\rangle^\perp$ 
		does not, or the other way around. 
	\end{enumerate} 
\end{proof}

An immediate application of theorem \ref{proj-family} is a necessary and 
sufficient condition for the existence of solutions of the oriented Apollonius problem in $\RR^n$ which asks for 
an oriented geometric cycle tangent (with compatible orientations) to $n+1$ given oriented cycles $\mathbf{c}=(c_1,\ldots,c_{n+1})$. 

\begin{coro} The oriented Apollonius problem on a configuration of $n+1$ 
oriented cycles $\mathbf{c}$ with corresponding algebraic cycles $\mathbf{x}$ which
are represented by the linearly independent set of homogeneous coordinate vectors $\mathbf{X}$, has 
exactly two solutions if $\Delta(\mathbf{X})<0$, no solution if $\Delta(\mathbf{X})>0$, 
and one solution if $\Delta(\mathbf{X})=0$.
\end{coro}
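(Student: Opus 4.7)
The plan is to translate the Apollonius condition into Lie geometric language and then apply Theorem~\ref{proj-family} to the orthogonal complement $\langle\mathbf{x}\rangle^\perp$. A cycle $y$ solves the oriented Apollonius problem if and only if $y\in\Omega$ and $y$ is in oriented contact with each $c_i$; by item~(1) of the discussion of the Lie product in Section~\ref{cikli} this is the linear condition $(Y\mid X_i)=0$ for every $i$, so the solution set is exactly $\langle\mathbf{x}\rangle^\perp\cap\Omega$. Since the $n+1$ vectors of $\mathbf{X}$ are linearly independent in $\mathbb{R}^{n+3}$, $\langle\mathbf{X}\rangle^\perp$ has linear dimension $2$ and $\langle\mathbf{x}\rangle^\perp$ is a projective line in $\mathbb{P}^{n+2}$.

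The crucial observation is that each $x_i$ is a proper cycle and therefore lies in $\Omega$, so $\langle\mathbf{x}\rangle\cap\Omega$ is always nonempty. With this in hand the three cases of Theorem~\ref{proj-family} fall out directly. If $\Delta(\mathbf{X})>0$ then by part~(3) exactly one of $\langle\mathbf{x}\rangle$ and $\langle\mathbf{x}\rangle^\perp$ misses $\Omega$, and since $\langle\mathbf{x}\rangle$ meets $\Omega$ the complement does not, so there are no solutions. If $\Delta(\mathbf{X})=0$ then part~(2) supplies a point in $\langle\mathbf{x}\rangle\cap\langle\mathbf{x}\rangle^\perp\subset\Omega$, giving at least one solution. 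For $\Delta(\mathbf{X})<0$, Proposition~\ref{signature-1}(2) and the ambient signature $(n+1,2)$ force the restriction of the Lie form to $\langle\mathbf{X}\rangle^\perp$ to have signature $(1,1)$; the null cone of such a form on a two-dimensional space is the union of two distinct lines through the origin, yielding exactly two projective points and hence exactly two solutions.

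The step I expect to require the most care is the uniqueness claim when $\Delta(\mathbf{X})=0$. Here one must rule out that $\langle\mathbf{x}\rangle^\perp$ is entirely contained in $\Omega$, which would produce an infinite family of Apollonius solutions. I would argue this by a signature bookkeeping on $\langle\mathbf{X}\rangle^\perp$: its radical equals $\langle\mathbf{X}\rangle\cap\langle\mathbf{X}\rangle^\perp$, and because the ambient form has index only~$2$ while $\langle\mathbf{X}\rangle$ already contains the isotropic direction forced by $\Delta(\mathbf{X})=0$, this common radical is one-dimensional. The induced nondegenerate form on the one-dimensional quotient of $\langle\mathbf{X}\rangle^\perp$ is then anisotropic, so the only isotropic line in $\langle\mathbf{X}\rangle^\perp$ is the radical itself, which yields exactly one solution and completes the three cases.
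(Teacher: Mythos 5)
Your reduction of the problem to counting $\langle\mathbf{x}\rangle^\perp\cap\Omega$ and your treatment of the cases $\Delta(\mathbf{X})<0$ and $\Delta(\mathbf{X})>0$ follow the paper's own proof, with the welcome extra detail that the restricted form on the two-dimensional complement has signature $(1,1)$ (which is what actually yields \emph{exactly} two points, a fact the paper leaves implicit) and that $\langle\mathbf{x}\rangle$ meets $\Omega$ because the $x_i$ are proper.

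The gap is in the case $\Delta(\mathbf{X})=0$, precisely the step you flagged. The claim that the radical $\langle\mathbf{X}\rangle\cap\langle\mathbf{X}\rangle^\perp$ is one-dimensional does not follow from ``the ambient index is $2$ and $\langle\mathbf{X}\rangle$ already contains an isotropic direction'': a form of index $2$ admits totally isotropic \emph{planes}, and nothing in that sentence excludes $\langle\mathbf{X}\rangle^\perp$ being such a plane, in which case the entire projective line $\langle\mathbf{x}\rangle^\perp$ would lie in $\Omega$ and the problem would have infinitely many solutions. What actually rules this out is the properness of the spanning cycles, which you invoke in the other cases but not here. If the radical were two-dimensional, then $\langle\mathbf{X}\rangle^\perp\subset\langle\mathbf{X}\rangle$ would be totally isotropic, so the induced form on $\langle\mathbf{X}\rangle/\langle\mathbf{X}\rangle^\perp$ would have signature $(n-1,0)$ and the form on $\langle\mathbf{X}\rangle$ would be positive semidefinite; but every isotropic vector of a positive semidefinite form lies in its radical, so all $n+1\ge 3$ linearly independent vectors $X_i$ (which satisfy $(X_i\mid X_i)=0$) would have to lie in a two-dimensional subspace, a contradiction. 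With the radical one-dimensional your quotient argument does give exactly one solution. (The paper itself merely asserts that the line is tangent to $\Omega$, so your instinct that this step needs justification was correct; only the justification needs repair.)
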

\begin{proof}
A solution of the Apollonius problem is given by  
an intersection of $\langle\mathbf{x}\rangle^\perp$ with 
the Lie quadric. 
If  $\Delta(\mathbf{X})<0$ then 
$\langle\mathbf{x}\rangle^\perp$ is a projective line which intersects the
Lie quadric in two cycles corresponding to the two solutions. 
If $\Delta(\mathbf{X})>0$ then, since $\langle\mathbf{x}\rangle$ intersects the quadric its 
dual $\langle\mathbf{x}\rangle^\perp$ does not, and the problem has no solutions. 
Finally, if $\Delta(\mathbf{X})=0$ then the
projective line $\langle\mathbf{x}\rangle^\perp$ is tangent 
to  $\Omega$, so it contains 
exactly one point from $\Omega$, so the problem has exactly one solution.
\end{proof}

\subsection{Lie projections and Lie reflections}
Let $\mathbf{X}=(X_1,\ldots,X_k)\in \RR^{n+3}$ be a $k$-tuple of linearly
independent vectors such that $\Delta(\mathbf{X})\neq 0$ (that is, with $\mathbf{A_X}$ nonsingular). 
The {\em Lie orthogonal projection} 
onto the subspace $\langle\mathbf{X}\rangle$
is given by
\begin{equation}\label{projektor}
P_{\langle\mathbf{X}\rangle}Y =
[\mathbf{X}]\mathbf{A}_\mathbf{X}^{-1}[\mathbf{X}]^T \mathbf{A} Y.\end{equation}

The Lie orthogonal projection onto $\langle\mathbf{X}\rangle$ determines a projective map 
$$P_{\langle\mathbf{x}\rangle}\:
\PP^{n+2}\setminus\left<\mathbf{x}\right>^\perp\to\mathbf{x}\subset \PP^{n+2}.$$

\begin{proposition}\label{projektorji}
The Lie orthogonal projection has the following properties:
\begin{enumerate}
\item $P_{\langle\mathbf{X}\rangle^\perp}= \mbox{\rm Id} - P_{\langle\mathbf{X}\rangle}$.
\item  For any $Y_1,Y_2,Y\in \RR^{n+3}$, 
  $$(Y_1\mid P_{\langle\mathbf{X}\rangle}Y_2)=(P_{\langle\mathbf{X}\rangle}Y_1\mid
  Y_2)\quad\mbox{and}\quad(Y\mid P_{\langle\mathbf{X}\rangle}Y) =
  (P_{\langle\mathbf{X}\rangle}Y\mid P_{\langle\mathbf{X}\rangle}Y).$$
  \end{enumerate}
\end{proposition}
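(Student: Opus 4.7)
The plan is to first identify $P_{\langle\mathbf{X}\rangle}$ with the algebraic projection onto $\langle\mathbf{X}\rangle$ parallel to $\langle\mathbf{X}\rangle^\perp$ (well defined by Proposition \ref{signature-1}, which gives $\mathbb{R}^{n+3}=\langle\mathbf{X}\rangle\oplus\langle\mathbf{X}\rangle^\perp$), and then read off both items from this identification. Since $\Delta(\mathbf{X})\ne 0$ the Gram matrix $\mathbf{A}_\mathbf{X}$ is invertible, so formula \eqref{projektor} makes sense.

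First I would verify the two defining properties of a linear projection. The image of $P_{\langle\mathbf{X}\rangle}$ is automatically contained in $\langle\mathbf{X}\rangle$ because of the leading factor $[\mathbf{X}]$. If $Y=[\mathbf{X}]c\in \langle\mathbf{X}\rangle$, then $[\mathbf{X}]^T\mathbf{A}[\mathbf{X}]c=\mathbf{A}_\mathbf{X}c$ by definition \eqref{restricted}, so
$$P_{\langle\mathbf{X}\rangle}Y=[\mathbf{X}]\mathbf{A}_\mathbf{X}^{-1}\mathbf{A}_\mathbf{X}c=[\mathbf{X}]c=Y.$$
If $Y\in \langle\mathbf{X}\rangle^\perp$, then $(X_i\mid Y)=0$ for all $i$, which says exactly that $[\mathbf{X}]^T\mathbf{A}Y=0$, and hence $P_{\langle\mathbf{X}\rangle}Y=0$. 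Together with the direct sum decomposition, this pins down $P_{\langle\mathbf{X}\rangle}$ as the projection onto $\langle\mathbf{X}\rangle$ along $\langle\mathbf{X}\rangle^\perp$.

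For part (1) I would note that, by Proposition \ref{signature-1}, the Lie form is also nondegenerate on $\langle\mathbf{X}\rangle^\perp$, so $P_{\langle\mathbf{X}\rangle^\perp}$ is defined by the same formula \eqref{projektor} applied to any basis of $\langle\mathbf{X}\rangle^\perp$, and the argument above (with the roles of the two complementary subspaces swapped) identifies it as the projection onto $\langle\mathbf{X}\rangle^\perp$ along $\langle\mathbf{X}\rangle$. Writing any $Y=Y_1+Y_2$ with $Y_1\in\langle\mathbf{X}\rangle$, $Y_2\in\langle\mathbf{X}\rangle^\perp$ then gives $P_{\langle\mathbf{X}\rangle}Y=Y_1$ and $P_{\langle\mathbf{X}\rangle^\perp}Y=Y_2=Y-P_{\langle\mathbf{X}\rangle}Y$, which is (1).

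For part (2), the first identity is a direct matrix computation: both $\mathbf{A}$ and $\mathbf{A}_\mathbf{X}$ are symmetric, hence
$$\mathbf{A}\,P_{\langle\mathbf{X}\rangle}=\mathbf{A}[\mathbf{X}]\mathbf{A}_\mathbf{X}^{-1}[\mathbf{X}]^T\mathbf{A}$$
is a symmetric matrix, so $(Y_1\mid P_{\langle\mathbf{X}\rangle}Y_2)=Y_1^T(\mathbf{A}P_{\langle\mathbf{X}\rangle})Y_2$ equals $(P_{\langle\mathbf{X}\rangle}Y_1\mid Y_2)$. For the second identity I would apply the first together with part (1): writing $Y=P_{\langle\mathbf{X}\rangle}Y+(Y-P_{\langle\mathbf{X}\rangle}Y)$, the second summand lies in $\langle\mathbf{X}\rangle^\perp$ by (1), so it is Lie orthogonal to $P_{\langle\mathbf{X}\rangle}Y\in\langle\mathbf{X}\rangle$, giving
$$(Y\mid P_{\langle\mathbf{X}\rangle}Y)=(P_{\langle\mathbf{X}\rangle}Y\mid Y)=(P_{\langle\mathbf{X}\rangle}Y\mid P_{\langle\mathbf{X}\rangle}Y).$$
There is no real obstacle here; the only subtlety is remembering that the existence of $P_{\langle\mathbf{X}\rangle^\perp}$ (needed in part (1)) requires Proposition \ref{signature-1}(1), which is why the hypothesis $\Delta(\mathbf{X})\ne 0$ is essential.
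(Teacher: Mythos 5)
Your proof is correct and follows essentially the same route as the paper: identify $P_{\langle\mathbf{X}\rangle}$ with the projection determined by the direct sum $\RR^{n+3}=\langle\mathbf{X}\rangle\oplus\langle\mathbf{X}\rangle^\perp$ for part (1), and use the symmetry of $\mathbf{A}$ and $\mathbf{A}_\mathbf{X}$ for the first identity in part (2), deducing the second from it (the paper invokes $P_{\langle\mathbf{X}\rangle}^2=P_{\langle\mathbf{X}\rangle}$ where you invoke orthogonality of $Y-P_{\langle\mathbf{X}\rangle}Y$ to $\langle\mathbf{X}\rangle$, which is the same thing). Your explicit check that formula \eqref{projektor} fixes $\langle\mathbf{X}\rangle$ and annihilates $\langle\mathbf{X}\rangle^\perp$ is a small but welcome addition that the paper leaves implicit.
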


\begin{proof}
\begin{enumerate}
\item Since $\Delta(\mathbf{X})\neq 0$, the Lie form is nondegenerate on $\mathbf{X}$ and, by lemma \ref{signature-1}, each vector $Y\in \RR^{n+3}$ has a unique decomposition 
into $Y=Y_1+Y_2$, with $Y_1\in \langle\mathbf{X}\rangle$ and $Y_2\in \langle\mathbf{X}\rangle^\perp$. Then 
$P_{\langle\mathbf{X}\rangle}(Y)=Y_1$ and $P_{\langle\mathbf{X}\rangle^\perp}(Y)=Y_2$, and so
$P_{\langle\mathbf{X}\rangle^\perp}(Y)=Y-P_{\langle\mathbf{X}\rangle}(Y)$.
\item Since $\mathbf{A}$, as well as  $\mathbf{A}_{\mathbf{X}}=([\mathbf{X}]^T \mathbf{A} [\mathbf{X}])$, is symmetric 
\begin{eqnarray*}
(Y_1\mid P_{\langle\mathbf{X}\rangle}Y_2)
&=& ([\mathbf{X}]\mathbf{A}_{\mathbf{X}}^{-1}[\mathbf{X}]^T\mathbf{A}Y_2)^T 
\mathbf{A}Y_1\\
&=&Y_2^T\mathbf{A}([\mathbf{X}]\mathbf{A}_{\mathbf{X}}^{-1}[\mathbf{X}]^T)%
\mathbf{A}Y_1\\
&=&Y_2^T\mathbf{A}P_{\langle\mathbf{X}\rangle}Y_1\\
&=&\left(P_{\langle\mathbf{X}\rangle}Y_1\mid Y_2\right)
\end{eqnarray*}
The second statement follows from the first, since 
$P_{\langle\mathbf{X}\rangle}^2=P_{\langle\mathbf{X}\rangle}$. 
\end{enumerate}
\end{proof}

\begin{proposition}\label{sprbaze}
Let $\mathbf{X}=(X_1,\ldots,X_k)$, $\mathbf{Y}=(Y_1,\ldots,Y_m)$, and 
$\mathbf{Z}=(X_1,\ldots,X_k,Y_1,\ldots,Y_m)$ be linearly independent vectors and $\Delta(\mathbf{X})\ne 0$. Then  
\begin{equation}\label{sprbazef}
\Delta(\mathbf{Z})
=\Delta(\mathbf{X})\Delta(P_{\langle\mathbf{X}\rangle^\perp}\mathbf{Y}).
\end{equation}\label{razbiti-na-produkt}
\end{proposition}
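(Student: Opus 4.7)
The plan is to split each $Y_j$ along the direct sum decomposition $\RR^{n+3}=\langle\mathbf{X}\rangle\oplus\langle\mathbf{X}\rangle^\perp$ guaranteed by Proposition \ref{signature-1} (which applies because $\Delta(\mathbf{X})\ne 0$ forces the Lie form to be nondegenerate on $\langle\mathbf{X}\rangle$), and then recognize the determinant on the right-hand side as a Gram determinant computed after projecting away the $\langle\mathbf{X}\rangle$-components. Concretely, I will set $Y_j':=P_{\langle\mathbf{X}\rangle^\perp}Y_j=Y_j-P_{\langle\mathbf{X}\rangle}Y_j$, so that $Y_j'=Y_j+\sum_i c_{ij}X_i$ for some scalars $c_{ij}$.

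First I would observe that the list $\mathbf{Z}':=(X_1,\ldots,X_k,Y_1',\ldots,Y_m')$ is obtained from $\mathbf{Z}$ by a block-triangular change of basis of determinant $1$, so $[\mathbf{Z}']=[\mathbf{Z}]\,\mathbf{M}$ with $\det\mathbf{M}=1$, and consequently
\[
\Delta(\mathbf{Z}')=\det\bigl(\mathbf{M}^T[\mathbf{Z}]^T\mathbf{A}[\mathbf{Z}]\mathbf{M}\bigr)=\Delta(\mathbf{Z}).
\]
Next I would use that each $Y_j'\in\langle\mathbf{X}\rangle^\perp$, so $(X_i\mid Y_j')=0$ for all $i,j$. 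Hence the Gram matrix $\mathbf{A}_{\mathbf{Z}'}$ is block diagonal,
\[
\mathbf{A}_{\mathbf{Z}'}=\begin{bmatrix}\mathbf{A}_{\mathbf{X}} & 0\\ 0 & \mathbf{A}_{\mathbf{Y}'}\end{bmatrix},
\]
and its determinant factors as $\Delta(\mathbf{X})\,\Delta(\mathbf{Y}')=\Delta(\mathbf{X})\,\Delta(P_{\langle\mathbf{X}\rangle^\perp}\mathbf{Y})$, which is the desired identity.

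The only point that needs a brief verification is that the $Y_j'$ are still linearly independent together with the $X_i$'s, so that $\Delta(\mathbf{Y}')$ is well-defined; this is immediate from the fact that $\mathbf{M}$ is upper triangular with ones on the diagonal and from the assumed linear independence of $\mathbf{Z}$. I do not anticipate any real obstacle: the whole argument is a Gram–Schmidt-style reduction in the indefinite Lie form, with Proposition \ref{signature-1} playing the essential role of legitimising the orthogonal splitting and Proposition \ref{projektorji}(2) guaranteeing that the Lie products among the $Y_j'$ coincide with those obtained by projecting inside $(\,\mid\,)$, so that $\mathbf{A}_{\mathbf{Y}'}$ really is the Gram matrix of $P_{\langle\mathbf{X}\rangle^\perp}\mathbf{Y}$. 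An equivalent and equally clean route would be to invoke the Schur complement formula directly on the block form of $\mathbf{A}_{\mathbf{Z}}$ and then identify the Schur complement $\mathbf{A}_{\mathbf{Y}}-\mathbf{B}^T\mathbf{A}_{\mathbf{X}}^{-1}\mathbf{B}$, where $\mathbf{B}=[\mathbf{X}]^T\mathbf{A}[\mathbf{Y}]$, with $\mathbf{A}_{\mathbf{Y}'}$ via the explicit formula \eqref{projektor} for $P_{\langle\mathbf{X}\rangle}$.
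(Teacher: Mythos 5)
Your proof is correct and is essentially the paper's own argument: the paper applies the block-determinant (Schur complement) identity to $\mathbf{A}_{\mathbf{Z}}$ and then identifies the Schur complement $\mathbf{A}_{\mathbf{Y}}-[\mathbf{Y}]^T\mathbf{A}[\mathbf{X}]\,\mathbf{A}_{\mathbf{X}}^{-1}[\mathbf{X}]^T\mathbf{A}[\mathbf{Y}]$ with the Gram matrix of $P_{\langle\mathbf{X}\rangle^\perp}\mathbf{Y}$, which is exactly the computation your unimodular change of basis $[\mathbf{Z}']=[\mathbf{Z}]\mathbf{M}$ performs explicitly (and which you yourself flag as the equivalent route at the end). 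There are no gaps.
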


\begin{proof}
The determinant of a block matrix is 
$$\det \left[\begin{array}{cc}\mathbf{A} & \mathbf{C}\\ \mathbf{D} & \mathbf{B}\end{array}\right]=\det \mathbf{A} \det\left(\mathbf{B}-\mathbf{D}\mathbf{A}^{-1}\mathbf{C}\right),$$
and so 
\begin{eqnarray*}\Delta(\mathbf{Z}) &=& \det \left[\begin{array}{cc}
[\mathbf{X}]^T\mathbf{A}[\mathbf{X}] & [\mathbf{X}]^T\mathbf{A}[\mathbf{Y}]\cr 
[\mathbf{Y}]^T
\mathbf{A}[\mathbf{X}] & [\mathbf{Y}]^T
\mathbf{A}[\mathbf{Y}]\end{array}\right]\\
&=& \mbox{det}\,([\mathbf{X}]^T\mathbf{A}[\mathbf{X}]))
\cdot\mbox{det}\,([\mathbf{Y}]^T\mathbf{A}[\mathbf{Y}]- 
[\mathbf{Y}]^T\mathbf{A}[\mathbf{X}] ([\mathbf{X}]^T\mathbf{A}[\mathbf{X}])^{-1} [\mathbf{X}]^T\mathbf{A}
[\mathbf{Y}])\\
&=& \Delta(\mathbf{X})\cdot\mbox{det}\,([\mathbf{Y}]^T\mathbf{A}([\mathbf{Y}]-P_{\langle\mathbf{X}\rangle}[\mathbf{Y}]))\\
&=& \Delta(\mathbf{X})\cdot\mbox{det}\,([\mathbf{Y}]^T\mathbf{A}P_{\langle\mathbf{X}\rangle^\perp}[\mathbf{Y}])\\
&=& \Delta(\mathbf{X})\cdot\mbox{det}\,([P_{\mathbf{X}^\perp}(\mathbf{Y})]^T\mathbf{A}[P_{\langle\mathbf{X}\rangle^\perp}
(\mathbf{Y})])\\
&=& \Delta(\mathbf{X})\Delta(P_{\langle\mathbf{X}\rangle^\perp}(\mathbf{Y}))
\end{eqnarray*}
\end{proof}

The {\em Lie reflection} with respect to the subspace $\left<\mathbf{X}\right>$
is given by 
$$
L_{\langle\mathbf{X}\rangle}Y = Y - 2P_{\langle\mathbf{X}\rangle^\perp}Y=
P_{\langle\mathbf{X}\rangle}Y-P_{\langle\mathbf{X}\rangle^\perp}Y.
$$
If $\Delta(\mathbf{X})\neq 0$ the reflection $L_{\langle\mathbf{X}\rangle}$ is an isomorphism and determines 
the projective map 
$$L_{\langle\mathbf{x}\rangle}\: \PP^{n+2}\to \PP^{n+2}$$ 
which is the identity on 
$\left<\mathbf{x}\right>$ and on $\langle\mathbf{x}\rangle^\perp$.\\
It has the following obvious properties:
\begin{enumerate}
\item
$L_{\langle\mathbf{X}\rangle}=-L_{\langle\mathbf{X}\rangle^\perp}$,
\item if $Y\in \left<\mathbf{X}\right>$ then $L_{\langle\mathbf{X}\rangle}Y=Y$ and if $Y\in \left<\mathbf{X}\right>^\perp$ then $L_{\langle\mathbf{X}\rangle}Y=-Y$,
\item $(L_{\langle\mathbf{X}\rangle}Y\mid
L_{\langle\mathbf{X}\rangle}Z)=(Y\mid Z)$,
\item if $S\in \left<\mathbf{X}\right>$ 
then 
$$(S\mid Y)=-(L_{\langle\mathbf{X}\rangle}S\mid Y) = -(S\mid
L_{\langle\mathbf{X}\rangle}Y),$$
\item if
$S\in\left<\mathbf{X}\right>$ then
$$(S\mid Y)=(L_{\langle\mathbf{X}\rangle}S\mid Y)=(S\mid
L_{\langle\mathbf{X}\rangle}Y)$$
and the reflection
$L_{\langle\mathbf{X}\rangle}$ preserves the chart
$\mathcal{U}_s$
and local coordinates in it,
since
$$
L_{\langle\mathbf{X}\rangle}\varphi_S(y)=
L_{\langle\mathbf{X}\rangle}\left(\frac{1}{(S\mid Y)}Y\right)= 
\frac{1}{(Y\mid S)}L_{\langle\mathbf{X}\rangle} \, Y=
\frac{1}{(
L_{\langle\mathbf{X}\rangle} \,Y
\mid S)}L_{\langle\mathbf{X}\rangle} \, Y=
\varphi_S(
L_{\langle\mathbf{x}\rangle}y).
$$
\end{enumerate}
Here are some interesting particular cases:
\begin{enumerate}
	\item If either $r\in \langle\mathbf{x}\rangle$ or $r\in \langle\mathbf{x}\rangle^\perp$, the  reflection $L_{\langle\mathbf{x}\rangle}$ 
	preserves angles. 
	\item	If either $w\in \langle\mathbf{x}\rangle$ or $w\in \langle\mathbf{x}\rangle^\perp$, the reflection $L_{\langle\mathbf{x}\rangle}$ preserves tangential distance.
	\item If $x\in \langle{r}\rangle^\perp\setminus\Omega$ is a nonproper cycle,  
	then $L_x$ represents the geometric inversion across the proper cycle 
	$\langle{x,r}\rangle\cap\Omega$ (if it exists). 
	\item The reflection $L_{\langle r\rangle}$ reverses orientation of cycles.
	\item If the special cycle 
$s$ is of the form $S=\rho W+R$, $\rho\in \RR$, then $L_{s}$ reverses orientation of spheres and changes the radius 
by $\frac{1}{2\rho}$. Since $s\in \langle{w}\rangle^\perp$ it also preserves 
	tangential distances. Points are thus mapped to spheres of radius $\frac{1}{2\rho}$, while spheres of 
	radius $\frac{1}{2\rho}$ are mapped to points. 
	On planes $L_{s}$ represents a parallel shift and 
	reverses orientation. 
	\end{enumerate}  
The reflection $L_{s}$ from the last example above can be used to give a nice proof of what is the Lie product of two spheres 
in the case $(\varphi_W(x)\mid \varphi_W(y))>0$ (figure \ref{potenca}): 

\begin{coro}\label{poten}
In the case when $x$ and $y$ represent spheres $c_x$ and $c_y$ with radii $\rho_x$ and $\rho_y$ with no common tangent plane, the product $(\varphi_W(x)\mid \varphi_W(y))$ 
is $d^2/2$, where $d$ is the half chord of a sphere concentric to $c_y$ 
with radius $\rho_x+\rho_y$ through the center of $c_y$. 
\end{coro}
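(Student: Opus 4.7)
The plan is to apply a Lie reflection $L_s$ with $S=\sigma W+R$ for a well-chosen $\sigma$ so as to turn $c_x$ into a point, reducing the problem to the point-inside-sphere case that has already been established just above the corollary.

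First I would record how $L_s$ acts on spheres in the $\mathcal{U}_w$-chart. Since $(S\mid S)=-1$ and $(W\mid S)=0$, the vector $S$ lies in $\langle w\rangle^\perp$, so by property (5) of reflections, $L_s$ preserves the chart $\mathcal{U}_w$ and its local coordinates. A short direct calculation with $\varphi_W(x)=(\nu,\pp,1,\rho)$ and the formula $L_s X = 2\frac{(X\mid S)}{(S\mid S)}S-X$ shows that $L_s X$ is (up to a projective scalar) another $W$-normalized vector whose center is still $\pp$ and whose new signed radius is $2\sigma-\rho$. Thus $L_s$ fixes centers of spheres and acts on the signed radius by $\rho\mapsto 2\sigma-\rho$.

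Next I set $\sigma=\rho_x/2$. Under this particular $L_s$, the cycle $c_x$ (center $\pp_x$, radius $\rho_x$) is sent to the point $\pp_x$, while $c_y$ (center $\pp_y$, radius $\rho_y$) is sent to a sphere $\tilde c_y$ concentric with $c_y$ having signed radius $\rho_x-\rho_y$. Lie reflections preserve the bilinear form, and because the $\mathcal{U}_w$-chart is preserved,
$$(\varphi_W(x)\mid\varphi_W(y))=(\varphi_W(L_s x)\mid\varphi_W(L_s y)).$$
Now invoke the point-sphere case already handled in the text: if one cycle is a point $\pp_x$ lying inside a sphere with center $\pp_y$ and radius $\tilde\rho$, then the Lie product equals $(\tilde\rho^2-\|\pp_x-\pp_y\|^2)/2 = d^2/2$, where $d$ is the half chord of that sphere through $\pp_x$. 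Applying this with $\tilde\rho=\rho_x-\rho_y$ yields the stated identity, the assumption that $c_x$ and $c_y$ share no common tangent plane being exactly $(\rho_x-\rho_y)^2>\|\pp_x-\pp_y\|^2$, which guarantees that $\pp_x$ lies strictly inside $\tilde c_y$ so that the half chord is well defined.

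The main subtlety is bookkeeping of signs and orientations: one must check that the signed radius $2\sigma-\rho_y=\rho_x-\rho_y$ arising from the reflection matches the quantity $\rho_x+\rho_y$ appearing in the statement (this requires reading the radii as the signed radii compatible with the orientation convention used for formula \eqref{ex}), and that "through the center of $c_y$" in the statement is to be read as "through the center of $c_x$", as is clear from the analogous formula stated just before the corollary and from the right-hand picture in Figure \ref{potenca}.
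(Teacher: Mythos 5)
Your proof is correct and follows essentially the same route as the paper's: apply the Lie reflection $L_s$ with $S=\sigma W+R$ so that $c_x$ becomes its center point and $c_y$ a concentric sphere, then invoke the point-inside-sphere case already established in the text. In fact your explicit derivation of the radius map $\rho\mapsto 2\sigma-\rho$ and the resulting choice $\sigma=\rho_x/2$ is more careful than the paper's version, which takes $S=\frac{1}{\rho_x}W+R$ (a slip traceable to the imprecise constant in the paper's property that ``spheres of radius $\frac{1}{2\rho}$ are mapped to points''), and your two bookkeeping remarks are also right: the radii must be read as signed radii so that $\rho_x-\rho_y$ becomes the sum of the unsigned radii, and ``through the center of $c_y$'' in the statement should read ``through the center of $c_x$''.
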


\begin{proof} The reflection $L_{s}$, where $S=\frac{1}{\rho_x}W+R$ preserves the chart
$\mathcal{U}_w$ and local coordinates in it, so 
$$(\varphi_W(x)\mid \varphi_W(y))=(\varphi_W(L_{s}x)\mid \varphi_W(L_{s}y))=\frac{d^2}{2},$$
where $L_{s}(x)$ is a point cycle representing the center of $c_x$, and $z=L_{s}(y)$ 
represents the concentric sphere $c_{z}$ to $c_y$ with radius $\rho_x+\rho_y$, so $d$ 
is the half chord of $c_z$ through the center of $c_x$. 
\end{proof}

\subsection{Hyperbolic $s$-families}
Let $\langle\mathbf{x},s\rangle\cap\Omega$ be an $s$-family given by linearly independent vectors $\mathbf{X}=(X_1,\dots,X_k)$, $2\le k\le n$ and $S$.    
Depending on the sign of determinant $\Delta(\mathbf{X},S)$ 
we distinguish three types of $s$-families
$\langle\mathbf{x},s\rangle\cap\Omega$.
\begin{enumerate}
	\item If $\Delta(\mathbf{X},S)>0$ the family is 
	\emph{elliptic}, 
	\item if $\Delta(\mathbf{X},S)<0$ the family is \emph{hyperbolic} and
	\item if $\Delta(\mathbf{X},S)=0$ the family is \emph{parabolic}. 
\end{enumerate} 
We will be interested only in hyperbolic families, 
since, according to theorem \ref{proj-family} a hyperbolic family as well as its cofamily are both nonempty 
and thus determine geometric objects. 

\begin{definition} Let $x$ be a nonproper cycle. 
An intersection of the projective line $\langle x,s\rangle$ with $\Omega$ 
will be called a {\em projection of $x$ onto $\Omega$ along $s$}. \end{definition}

\begin{lema}If $(S\mid S)\neq 0$ then a nonproper cycle $x$ has two different projections onto $\Omega$ along $s$ if $\Delta(X,S)<0$, one projection if $\Delta(X,S)=0$ and no projections if $\Delta(X,S)>0$. If $(S\mid S)=0$ then one of the projections is equal to $s$, and a second one exists if and only if $\Delta(X,S)\neq 0$.
\end{lema}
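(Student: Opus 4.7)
The plan is to reduce the question to a single quadratic equation in two homogeneous parameters by parametrizing the projective line $\langle x,s\rangle$. A point of this line has homogeneous coordinates $\alpha X+\beta S$ for $(\alpha,\beta)\neq(0,0)$, and it lies on $\Omega$ precisely when
$$(\alpha X+\beta S\mid \alpha X+\beta S)=\alpha^2(X\mid X)+2\alpha\beta(X\mid S)+\beta^2(S\mid S)=0.$$
The key algebraic observation I would use is that, by definition of $\Delta(X,S)$,
$$\Delta(X,S)=(X\mid X)(S\mid S)-(X\mid S)^2,$$
so the discriminant of this binary quadratic form is $-4\Delta(X,S)$. Projectively proportional solutions correspond to the same projection, so I am really counting projective classes of nonzero solutions $(\alpha,\beta)$.

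For the first part, suppose $(S\mid S)\neq 0$. Since $x$ is nonproper we have $(X\mid X)\neq 0$, and I would first rule out solutions with $\beta=0$: such a solution would force $\alpha^2(X\mid X)=0$, hence $\alpha=0$, contradicting $(\alpha,\beta)\neq(0,0)$. So every projective solution is normalized by $\beta=1$, leaving the honest quadratic $\alpha^2(X\mid X)+2\alpha(X\mid S)+(S\mid S)=0$ in $\alpha$, with discriminant $-4\Delta(X,S)$. The three sign cases on $\Delta(X,S)$ then give two, one, or zero real values of $\alpha$, which are the three cases of the lemma.

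For the second part, suppose $(S\mid S)=0$, so $s\in\Omega$ already. The quadratic form collapses to $\alpha\bigl[\alpha(X\mid X)+2\beta(X\mid S)\bigr]=0$. The factor $\alpha=0$ produces the projective point $s$ itself, proving that $s$ is always one of the projections. The second factor, using $(X\mid X)\neq 0$, can be solved for $\alpha$ in terms of $\beta$ and yields a nonzero $\alpha$ (hence a projectively different second point) if and only if $(X\mid S)\neq 0$. In this case $\Delta(X,S)=-(X\mid S)^2$, so the condition $(X\mid S)\neq 0$ is equivalent to $\Delta(X,S)\neq 0$, which is exactly the claim.

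The computations are entirely elementary, and I do not foresee a substantive obstacle. The only delicate bookkeeping is to use the hypothesis $(X\mid X)\neq 0$ in the right places: once to exclude the spurious solution $\beta=0$ in the first case, and once to guarantee that the linear factor in the second case really produces a second projective point distinct from $s$ whenever $(X\mid S)\neq 0$.
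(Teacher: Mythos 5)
Your proposal is correct and takes essentially the same approach as the paper: parametrize the line $\langle x,s\rangle$, observe that the resulting quadratic has discriminant proportional to $-\Delta(X,S)$, and treat $(S\mid S)=0$ as the degenerate case where the equation becomes linear and $s$ itself is automatically one projection. The only cosmetic difference is that you use homogeneous parameters $(\alpha,\beta)$ where the paper uses the affine parameter $\lambda$ in $X+\lambda S$, which lets you recover the solution $s$ as the root $\alpha=0$ instead of noting it separately.
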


\begin{proof} If $(S\mid S)\neq 0$ then the equation 
\begin{equation}\label{nn}(X+\lambda S\mid X+\lambda S)=(X\mid X)+2\lambda(X\mid S)+\lambda^2(S\mid S)=0\end{equation}
is quadratic with discriminant $-\Delta(X,S)$, so the claim follows.
If $(S\mid S)=0$ then $s$ is automatically the projection of an arbitrary $x$ onto $\Omega$ along $s$. A second projection exists if $(X\mid S)\neq 0$, it is the solution of equation (\ref{nn}) which is linear in this case. If $(X\mid S)=0$, equation (\ref{nn}) has no solutions.
\end{proof}
 
 \begin{theorem}\label{discr-family}
	Let $\langle\mathbf{x},s\rangle\cap \Omega$ be a hyperbolic $s$-family.  	
	\begin{enumerate} 
		\item If  $(S\mid S)<0$, 
		then each nonproper cycle $x\in\langle\mathbf{x},s\rangle$ different from $s$  
		has two projections onto $\Omega$ along $s$. Each cycle $x\in\langle\mathbf{x},s\rangle\cap\langle s \rangle^\perp$ is nonproper with 
		$(X\mid X)>0$. The  subspace $\langle\mathbf{X},S\rangle$ is a direct sum 
		\begin{equation}\label{direct}
		\langle\mathbf{X},S\rangle=\langle\mathbf{X},S\rangle\cap\langle
		S \rangle^\perp\oplus\langle S\rangle.
		\end{equation} 		
		\item If $(S\mid S)=0$ then a cycle 
		$x\in \langle\mathbf{x},s\rangle\cap\langle s \rangle^\perp$ 
		different from $s$ has no projection onto $\Omega$ along $s$ different from $s$, and is  nonproper with $(X\mid X)>0$. The subspace 
		$\langle\mathbf{X},S\rangle$ is a direct sum 
		\begin{equation}\label{direct1}\langle\mathbf{X},S\rangle=\langle\mathbf{X},S\rangle\cap\langle S\rangle^\perp
                \oplus\langle Y\rangle\end{equation}
                where $y$ is any proper cycle in $\langle\mathbf{x},s\rangle$ not in 
		$\langle s\rangle^\perp$.  
	\end{enumerate}
\end{theorem}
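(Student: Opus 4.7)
The plan is to split along the two cases for $(S\mid S)$ and, in both, to exploit the fact (guaranteed by the hyperbolic hypothesis $\Delta(\mathbf{X},S)<0$ together with Proposition~\ref{signature-1}(2)) that the Lie form restricted to $V:=\langle\mathbf{X},S\rangle$ is nondegenerate of index exactly one. All the assertions will then reduce to linear algebra inside $V$ combined with the preceding lemma describing projections onto $\Omega$ along $s$.

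For case (1), $(S\mid S)<0$, I would note that $\langle S\rangle$ is a nondegenerate negative definite line inside the nondegenerate space $V$, so applying Proposition~\ref{signature-1}(1) \emph{inside} $V$ yields $V=\langle S\rangle\oplus(V\cap\langle s\rangle^\perp)$, which is exactly~\eqref{direct}. Because the unique negative direction of $V$ is already accounted for by $S$, the induced form on $V\cap\langle s\rangle^\perp$ is positive definite, and this instantly gives $(Y\mid Y)>0$ for every nonzero $Y\in V\cap\langle s\rangle^\perp$. For the two-projections claim, I would write $X=\alpha S+X'$ with $X'\in V\cap\langle s\rangle^\perp$. The assumption $x\neq s$ forces $X'\neq 0$, hence $(X'\mid X')>0$, and a one-line computation gives $\Delta(X,S)=(X\mid X)(S\mid S)-(X\mid S)^2=(X'\mid X')(S\mid S)<0$; the preceding lemma then supplies the two projections along $s$.

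For case (2), $(S\mid S)=0$, the classical Lorentz-geometry picture enters: $S$ is a null direction in the signature $(k,1)$ space $V$, so $V\cap\langle s\rangle^\perp$ is a hyperplane containing $S$ on which the form is positive semidefinite with one-dimensional kernel equal to $\langle S\rangle$. I would verify this by completing $S$ to a hyperbolic pair $(S,T)$ with $(T\mid T)=0$ and $(S\mid T)=1$, decomposing $V=\langle S,T\rangle\oplus W$ with $W$ the $(k-1)$-dimensional positive definite orthogonal complement of $\langle S,T\rangle$ inside $V$; then $V\cap\langle s\rangle^\perp=\langle S\rangle\oplus W$, which exhibits exactly the claimed signature. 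It follows that any $x\in V\cap\langle s\rangle^\perp$ with $x\neq s$ satisfies $(X\mid X)>0$, and since $(X\mid S)=0$ the previous lemma leaves $s$ as the only projection onto $\Omega$ along $s$. For the decomposition~\eqref{direct1}, I would start from any $T\in V$ with $(T\mid S)\neq 0$ (available by nondegeneracy of the Lie form on $V$) and renormalise to the proper vector $Y=T-\tfrac{(T\mid T)}{2(T\mid S)}S$, which still satisfies $(Y\mid S)=(T\mid S)\neq 0$, so $Y\notin V\cap\langle s\rangle^\perp$; dimension counting then closes the argument.

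The only genuinely new input is the Lorentz structural lemma in case (2): recognising that the orthogonal complement of a null direction is positive semidefinite with the null direction itself as kernel is the real crux of the theorem. Once one has passed to a hyperbolic pair $(S,T)$ everything is routine, and case (1) is essentially a single application of the signature proposition already stated in the paper.
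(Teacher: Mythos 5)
Your proof is correct, but it follows a genuinely different route from the paper's, most visibly in case (2). For case (1) the paper argues pointwise: for a nonproper $x\ne s$ it asserts that the index of the form on $\langle X,S\rangle$ is $1$, hence $\Delta(X,S)<0$, and it deduces $(X\mid X)>0$ for $x\in\langle s\rangle^\perp$ from the quadratic equation $(X+\lambda S\mid X+\lambda S)=0$ satisfied by a projection; the splitting \eqref{direct} is then declared ``clear.'' You instead establish the orthogonal splitting $V=\langle S\rangle\oplus(V\cap\langle s\rangle^\perp)$ first, observe that the second summand is positive definite, and read off both the positivity statement and the bound $\Delta(X,S)=(X'\mid X')(S\mid S)<0$ from the decomposition $X=\alpha S+X'$; this is tighter, since index $1$ on a plane does not by itself exclude $\Delta(X,S)=0$, whereas your computation does. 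For case (2) the divergence is larger: the paper obtains $(X\mid X)>0$ by applying the Lie reflection $L_x$ to a proper cycle $y$ and computing $(X\mid X)=-2(\varphi_S(y)\mid\varphi_S(L_xy))$, and it derives \eqref{direct1} from differences of $\varphi_S$-coordinates; you use a Witt-type decomposition $V=\langle S,T\rangle\oplus W$ with $(S,T)$ a hyperbolic pair and $W$ positive definite, which identifies $V\cap\langle s\rangle^\perp=\langle S\rangle\oplus W$ as positive semidefinite with radical exactly $\langle S\rangle$ and makes the positivity immediate. Your version buys a self-contained linear-algebra argument that avoids the reflection computation (whose final inequality the paper does not justify and which, unwound, is essentially equivalent to the positivity being proved); the paper's version buys consistency with its toolkit of charts, projections and reflections that it reuses throughout. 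One small remark: \eqref{direct1} is asserted for an arbitrary proper $y\notin\langle s\rangle^\perp$, so you should state explicitly that your hyperplane-plus-dimension count applies to any such $y$, your construction of $Y=T-\tfrac{(T\mid T)}{2(T\mid S)}S$ serving only to show that one exists.
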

\begin{proof} 
	\begin{enumerate}
       		\item Let $(S\mid S)<0$. The restriction of the Lie form to $\langle \mathbf{x},s\rangle$ has index $1$. For any $x\in \langle\mathbf{x},s\rangle$ where $x$ is nonproper and different from $s$ the same is true also for the subspace $\langle x,s\rangle$, so $\Delta(X,S)<0$, and by the lemma $x$ has two projection onto $\Omega$ along $s$.  If $x\in \langle\mathbf{x},s\rangle \cap \langle s\rangle^\perp$, 
		the projection of $x$ onto $\Omega$ along $s$ has homogeneous coordinates $X+\lambda S$ satisfying the equation 
		$$(X+\lambda S\mid X+\lambda S)=(X\mid X)+\lambda^2(S\mid S),$$
		so $(X\mid X)=-\lambda^2(S\mid S)>0$ and $x$ is non proper. Clearly, equation (\ref{direct}) holds in this case.
		
		\item Let  $(S\mid S)=0$, in this case 
		$s\in \langle\mathbf{x},s\rangle\cap\langle s \rangle^\perp$.
		For each cycle $x\in
		\langle\mathbf{x},s\rangle\cap\langle s\rangle^\perp$
		$$\Delta(X,S)=(X\mid X)(S\mid S)-(X\mid S)^2=0$$
		so by the lemma there is no projection of $x$ onto $\Omega$ along $s$ different from $s$. Let $y\in\langle \mathbf{x},s\rangle$ be any fixed proper cycles different from $s$. For any $y'\neq y,s$, the difference $X=\varphi_S(y)-\varphi_S(y')$ is in $\langle S\rangle^\perp$, so $Y'=X+Y$ and equation (\ref{direct1}) is valid. Finally, for any $x\in\langle\mathbf{x},s\rangle\cap\langle s\rangle^\perp$ different from $s$ let $y'=L_xy$ and let $X=\varphi_S(y)-\varphi_S(y')$. Then 
		$$(X\mid X)=(\varphi_S(y)-\varphi_S(y')\mid \varphi_S(y)-\varphi_S(y'))=-2(\varphi_S(y)\mid\varphi_S(y'))>0.$$
			
	\end{enumerate}
\end{proof}

\subsection{Determinants and geometry}\label{simplex}

Recall the following well known determinant from geometry in $\RR^n$. 
Let $a_0,a_1,\dots,a_k$ be the vertices of a  
$k$-dimensional simplex in $\RR^n$. Then the volume of the simplex is given by the {\em Cayley-Menger determinant} 
\begin{equation} \label{Cayley-Menger}
	\mbox{\rm{vo}l}(a_0,\ldots,a_k)^2=\frac{(-1)^{k-1}}{2^k\,k!^2}\,
        \left| \begin{array}{cccccc}
        0 & d^2_{01}& d^2_{02} & \cdots & d^2_{0k} & 1\\
        d^2_{10} & 0 & d^2_{12}&\cdots & d^2_{1k} & 1\\
        \vdots & \vdots & \vdots & \ddots & \vdots & \vdots\\
        d^2_{k0} & d^2_{k1} & d^2_{k2}&\cdots & 0 & 1 \\
         1 & 1 & 1 & 1 & 1 & 0 \end{array} \right|
\end{equation}
where $d_{ij}=|a_i-a_j|$. 
The \emph{polar sine} of the angle at the vertex $a_0$ of 
the simplex $(a_0,a_1,\dots,a_k)$ is defined as 
\begin{equation}\label{Polar-sine}
\psin_{a_0}(a_0,a_1,\dots,a_k)=k!\frac{\vol(a_0,a_1,\dots,a_k)}{|a_1-a_0|\dots|a_k-a_0|}.
\end{equation}
It is the ratio of the volume of the given simplex
by the volume of the cube
with edges from the vertex $a_0$ of the same lengths. 

Determinants of $r$-families and $w$-families computed in different charts have several interesting geometric interpretations. Following are some examples.
\begin{enumerate}
\item Let $\mathbf{x}=(x_1,\dots,x_k)$ be independent proper cycles, 
where $2\le k \le n-1$ and 
$X_i=\varphi_W(x_i)$ the homogeneous coordinates in the chart $\mathcal{U}_w$. 
\begin{itemize}

\item The determinant  $\Delta(\mathbf{X},W)$ equals
\begin{equation}\label{Vol}
   \Delta(\mathbf{X},W)=
        \left| \begin{array}{cccccc}
        0 & -\frac{d^2_{12}}{2} & -\frac{d^2_{13}}{2} & \cdots & -\frac{d^2_{1k}}{2} & 1\\
        -\frac{d^2_{21}}{2} & 0 &-\frac{d^2_{23}}{2} & \cdots & -\frac{d^2_{2k}}{2} & 1\\
        \vdots & \vdots &\vdots &\ddots & \vdots & \vdots\\
        -\frac{d^2_{k1}}{2} & -\frac{d^2_{k2}}{2} & -\frac{d^2_{k3}}{2} & \cdots & 0 & 1 \\
         1 & 1 & 1 & 1 & 1 & 0 \end{array} \right|
\end{equation}
where $d_{ij}$ is the tangential distance between the cycle $x_i$ and $x_j$.  
If $\Delta(\mathbf{X},W)<0$ then by (\ref{Cayley-Menger})
$$
	\Delta(\mathbf{X},W) =-(k-1)!^2 \vol(q_1,\ldots, q_k)^2
$$
where $q_1,\ldots,q_k$ are the vertices of the {\em contact simplex}, that is, the points of tangency of the cycles of $\mathbf{x}$ to their common tangent  plane. 
The sides of this simplex have lengths equal to the tangential distances 
between the cycles of $\mathbf{x}$ (Figure \ref{simpleksi}, left). 

\begin{center}
\begin{figure}
\includegraphics[width=7cm]{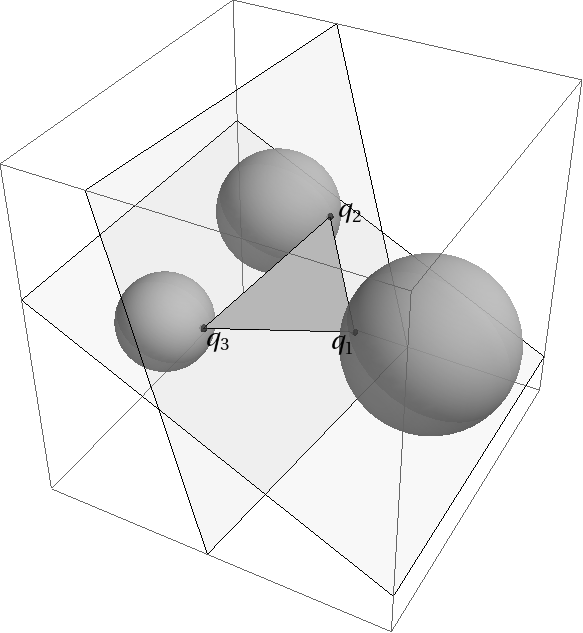}\qquad
\includegraphics[width=7cm]{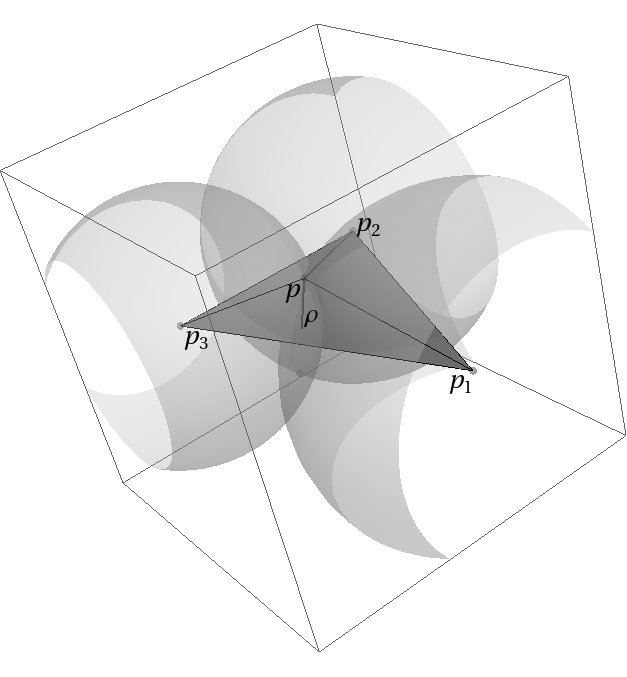}
\caption{The contact simplex (left) and the simplex with vertices in the centers of the cycles (right)\label{simpleksi}}
\end{figure}
\end{center}

\item  The determinant 
$\Delta(\mathbf{X},R,W)$ equals  by proposition \ref{sprbaze} to
\begin{equation}\label{Vol0}
\Delta(\mathbf{X},R,W)=(R\mid R)\Delta(P_{\langle R\rangle^\perp}\mathbf{X},W)=-\Delta(P_{\langle R\rangle^\perp}\mathbf{X},W)
\end{equation}
where  $P_{\langle R\rangle^\perp}\mathbf{X}$ is formed by the coordinate vectors in the $W$-chart of the point cycles $p_i$ corresponding to
the centers of $x_i$, so  
\begin{equation}\label{kren}\Delta(\mathbf{X},R,W) = (k-1)!^2\vol(p_1,\ldots,p_k)^2,\end{equation}
gives the squared volume of the simplex with vertices in the centers $p_i$ of the cycles $x_i$ (Figure \ref{simpleksi}, right).

\item Finally, let $(x_1,\ldots, x_k)$ be a $k$-tuple of intersecting proper cycles, $p$ a point in the common intersection, and $P$ its coordinate vector in the 
$W$-chart. Then 
\begin{equation*}
	\Delta(\mathbf{X},P,R,W)=
\left|\begin{array}{cccccc}
        0 & (X_1\mid X_2) & \cdots & 0 & (X_1\mid R) & 1\\
        (X_2\mid X_1) & 0 &\cdots & 0 & (X_2\mid R) & 1\\
        \vdots & \vdots &\ddots &\vdots & \vdots & \vdots\\
        0 & 0 & 0  & \cdots & 0 & 1 \\
	(R\mid X_1) & (R\mid X_2) &\cdots & 0 & -1 & 0 \\
	1 & 1 &\cdots & 1 & 0 & 0
	\end{array} \right| = \Delta(\mathbf{X},R).
\end{equation*}
and 
\begin{equation}\label{Vol1}
\Delta(\mathbf{X},P,R,W)=(R\mid R)\Delta(P_{\langle
r\rangle^\perp}(\mathbf{X},P)),W)
=-\Delta(P_{\langle
r\rangle^\perp}\mathbf{X},P,W)
\end{equation}
where $P_{\langle
r\rangle^\perp}\mathbf{X}$ represents the point cycles $(p_1,\ldots,p_k)$ of the centers of $x_i$ expressed in the $W$-chart.
By (\ref{kren}),  
\begin{equation}\label{prejsnji}\Delta(\mathbf{X},P,R,W)=\Delta(\mathbf{X},R)=-k!^2\vol(p_1,\ldots,p_k,p)^2\end{equation}
where $\vol(p_1,\ldots,p_k,p)$ is the volume of the simplex with vertices in the centers $p_i$ of the cycles $x_i$ and a point $p$ in the intersection. 

\end{itemize}
\item Let $\mathbf{x}=(x_1,\dots,x_k)$, $2\le k \le n-1$, be independent proper cycles representing spheres and 
let $X_i=\varphi_R(x_i)$ be homogeneous coordinates in the chart $\mathcal{U}_r$. 
Then  $X_i=\varphi_R(x_i)=\varphi_W(x_i)/\rho_i$ where $\rho_i$ is the radius of the sphere $x_i$ and by (\ref{Polar-sine}) and (\ref{prejsnji}),
\begin{equation}\label{StY}
\Delta(\mathbf{X},R)=-\frac{\Delta(\varphi_W(\mathbf{x}),R)}{\rho_1^2 \dots \rho_k^2}=
-\frac{k!^2\vol(p,p_1,\ldots,p_k)^2}{\rho_1^2\dots \rho_k^2}=-\psin_p(p,p_1,\ldots,p_k)^2,
\end{equation}
On the other hand 
$$(P_{\langle R\rangle^\perp}X_i\mid P_{\langle R\rangle^\perp}X_j)=(X_i\mid X_j)+1 =- \cos \psi_{ij}$$ 
$$ \Delta(\mathbf{X},R)=(R\mid R)\Delta(P_{\langle R \rangle^\perp}\mathbf{X})=-\Delta(P_{\langle R \rangle^\perp}\mathbf{X})$$
it follows that 
\begin{equation}
	\Delta(\mathbf{X},R)=
	-\psin(p,p_1,\ldots,p_k)^2=
	-\left| \begin{array}{cccc}
        1 & -\cos\psi_{12} &  \cdots & -\cos\psi_{1k}\\
        -\cos\psi_{21} & 1 &  \cdots & -\cos\psi_{2,k}\\
        \vdots & \vdots &  \ddots & \vdots\\
        -\cos\psi_{k1} & -\cos\psi_{k2} &  \cdots & 1\\
	 \end{array} \right|
\end{equation}
where $\psi_{ij}$ is the intersection angle of the spheres 
$x_i$ and $x_j$.
\end{enumerate}

\subsection{Discriminants}
Since the value
of the determinant of a family depends on the choice of coordinate vectors, 
it can not, by itself, reflect geometric properties of the 
underlying geometric objects like the radius, 
center and orientation of a circle given by a Steiner 
pencil or the  vertex, axis and angle at the
vertex of a cone given by a cone pencil. These are, however, reflected by   
quotients of determinants. For example, the square 
of the radius of a sphere given by $x$ is equal to 
$$\rho^2=-\frac{\Delta(X,R)}{\Delta(X,R,W)}.$$

\begin{definition} Let $S'$ be such that $(\mathbf{X},S,S')$ are linearly independent, $\Delta(\mathbf{X},S)\neq 0$, and $(S'\mid S)=0$. 
The {\em $S'$-discriminant} of $\langle\mathbf{x},s\rangle$ is given by 
\begin{equation}\label{definicija-delta}
        \delta_{S'}(\mathbf{x},s)=\frac{\Delta(\mathbf{X},S,S')}{\Delta(\mathbf{X},S)}
\end{equation}
\end{definition}
Note that the value of $\delta_{S'}(\mathbf{x},s)$  depends not only on the cycle $s'$ but also on the choice of the vector $S'$.  

For example, if $\mathbf{x}=x$ is a single cycle, the $S'$-discriminant of $(x,s)$ is 
\begin{eqnarray*}\delta_{S'}(x,s)&=&\frac{\Delta(X,S,S')}{\Delta(X,S)}=-\frac{1}{(X\mid X)(S\mid S)-(X\mid S)^2} \, \left|\begin{array}{ccc} 
(X\mid X) & (X\mid S) & (X\mid S')\\ (S\mid X) & (S\mid S) & 0\\ (S'\mid X) & 0 & (S'\mid S')\end{array}\right|\\
&=&(S'\mid S')-\frac{(S\mid S)(X\mid S')^2}{(S\mid S)(X\mid X)-(X\mid S)^2}.\end{eqnarray*}

For a fixed subspace $\langle \mathbf{x},s\rangle$ let $C=\pxv S'$ denote the Lie projection of the vector $S'$   onto the subspace $\langle \mathbf{X},S\rangle$, and 
$\mathbf{L}=\pxm \langle C\rangle^\perp$ the Lie projection of the orthogonal subspace to $\langle C\rangle$ onto $\langle\mathbf{X},S\rangle$.  

\begin{proposition}\label{ciklivsopu}
If $\Delta(\mathbf{X},S)<0$,  
\begin{enumerate}
\item $ \delta_{S'}(\mathbf{x},s)=(S'-C\mid S'-C)=(S'\mid S')-(C\mid C)$
\item $c\in\langle s\rangle^\perp$ and $\mathbf{l}\subset \langle s'\rangle^\perp$
\item $\delta_{S'}(\ell,s)=(S'\mid S')$ for any $\ell \in \mathbf{l}$.
\end{enumerate}
\end{proposition}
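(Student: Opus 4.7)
The plan is to derive all three parts from the two basic properties of Lie projections established in Propositions \ref{projektorji} and \ref{sprbaze}.

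For part (1), I would start by writing $S' = C + (S' - C)$ with $S' - C = P_{\langle\mathbf{X},S\rangle^\perp}S'$, so that the list $(\mathbf{X}, S, S'-C)$ spans the same subspace as $(\mathbf{X}, S, S')$ and hence has the same Gram determinant. Proposition \ref{sprbaze} applied to $\mathbf{Z}=(\mathbf{X},S,S'-C)$ with $\mathbf{Y} = S'-C \in \langle\mathbf{X},S\rangle^\perp$ gives
\[
\Delta(\mathbf{X},S,S') = \Delta(\mathbf{X},S,S'-C) = \Delta(\mathbf{X},S)\cdot (S'-C\mid S'-C),
\]
from which the first equality follows immediately by the definition of $\delta_{S'}(\mathbf{x},s)$. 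For the second equality, expanding $(S'-C\mid S'-C)$ and using property (2) of Proposition \ref{projektorji} in the form $(S'\mid C) = (S'\mid \pxv S') = (\pxv S'\mid \pxv S') = (C\mid C)$ collapses the cross term.

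For part (2), the claim $c\in\langle s\rangle^\perp$ amounts to $(C\mid S)=0$. Using property (2) of Proposition \ref{projektorji} together with the fact that $S\in\langle\mathbf{X},S\rangle$ implies $\pxv S=S$, I get $(C\mid S)=(\pxv S'\mid S) = (S'\mid \pxv S)=(S'\mid S)=0$, since $(S'\mid S)=0$ is part of the hypothesis on $S'$. Similarly, for $\mathbf{l}\subset \langle s'\rangle^\perp$, any $L\in \mathbf{L}$ is of the form $\pxv Y$ for some $Y$ with $(Y\mid C)=0$, and then $(L\mid S') = (\pxv Y\mid S')= (Y\mid \pxv S')=(Y\mid C)=0$.

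For part (3), I use the single-cycle formula derived in the excerpt,
\[
\delta_{S'}(\ell,s)=(S'\mid S')-\frac{(S\mid S)(L\mid S')^2}{(S\mid S)(L\mid L)-(L\mid S)^2},
\]
which is valid provided $(\mathbf{L},S,S')$ are linearly independent and $\Delta(L,S)\neq 0$. By part (2), $(L\mid S')=0$ for every $L\in\mathbf{L}$, so the correction term vanishes and $\delta_{S'}(\ell,s)=(S'\mid S')$. The main thing to watch is that this closed form is well-defined for $\ell\in\mathbf{l}$, i.e.\ that $\ell$, $s$, $s'$ are indeed independent and $\Delta(L,S)\neq 0$; this is the minor technical point of the argument, but it is straightforward since $\langle\mathbf{x},s\rangle$ is hyperbolic and $\mathbf{l}$ is the Lie projection of $\langle C\rangle^\perp$, so generic $\ell\in\mathbf{l}$ lies outside $\langle s\rangle$ and gives a nondegenerate $2$-plane with $s$. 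No substantial obstacle is expected: the whole proposition is essentially an unpacking of Propositions \ref{projektorji} and \ref{sprbaze} combined with the hypothesis $(S'\mid S)=0$.
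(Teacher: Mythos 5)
Your proposal is correct and follows essentially the same route as the paper: part (1) via Proposition \ref{sprbaze} and the identity $(C\mid S')=(C\mid C)$, part (2) via the self-adjointness of $\pxv$, and part (3) by observing that $(L\mid S')=(S\mid S')=0$ makes the $S'$-contribution to the determinant factor out. The only cosmetic differences are your redundant preliminary replacement of $S'$ by $S'-C$ in part (1) — Proposition \ref{sprbaze} applied directly to $(\mathbf{X},S,S')$ already produces $P_{\langle\mathbf{X},S\rangle^\perp}S'=S'-C$ — and your use of the explicit single-cycle formula in part (3) where the paper just reads off the block structure of $\Delta(L,S,S')$.
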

\begin{proof}
\begin{enumerate}
       \item By proposition \ref{sprbaze}, 
       $$\delta_{S'}(\mathbf{x},s)=\frac{\Delta(\mathbf{X},S,S')}{\Delta(\mathbf{X},S)}=
        \Delta(P_{\langle\mathbf{X},S\rangle^\perp}S')=\Delta(S'-\pxv (S'))=(S'-C\mid
S'-C)=(S'\mid S')-(C\mid C).$$
The last equality follows from proposition \ref{projektorji}, since  $(C\mid C)=(C\mid S')$.
	\item $(C\mid S)=(\pxv S'\mid S)=
		(S'\mid \pxv S)=(S'\mid S)=0$. For any $L=\pxv Y$ where $Y\in \langle C\rangle^\perp$
		$(L\mid S')=(\pxv Y\mid S')=
		(Y\mid \pxv S')=(Y'\mid C)=0.$
	\item By the definition, $\Delta(L,S,S')=(S'\mid S')\Delta(L,S)$.
\end{enumerate}
\end{proof}

\begin{theorem} \label{ekstremi1}
Let $\langle\mathbf{x},s\rangle\cap\Omega$ be a hyperbolic $s$-family and $(S\mid S)<0$.
The function 
$$h(x):=\delta_{S'}(x,s)$$
is defined on all $\langle\mathbf{x},s\rangle\cap\Omega$ and achieves its
extreme values on the compact set $\langle\mathbf{x},s\rangle$ at $c$ and on the subspace 
$\mathbf{l}$.
\end{theorem}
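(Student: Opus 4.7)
The plan is to parametrize $\langle\mathbf{x},s\rangle$ via a Lie-orthogonal splitting adapted to both $s$ and $c$, reduce $h(x)$ to a rational function of the resulting coordinates, and read off the extrema.

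The starting point is a signature argument. Since $\Delta(\mathbf{X},S)<0$, Proposition \ref{signature-1}(2) tells us that the Lie form restricted to $\langle\mathbf{X},S\rangle$ has index $1$; combined with $(S\mid S)<0$, the entire negative direction is carried by $\langle S\rangle$, so the Lie-orthogonal complement of $\langle S\rangle$ inside $\langle\mathbf{X},S\rangle$ is positive definite. Proposition \ref{ciklivsopu}(2) gives $(C\mid S)=0$, placing $C$ in this positive-definite piece; in particular $(C\mid C)>0$ provided $C\ne 0$ (if $C=0$ then $h$ is constantly $(S'\mid S')$ and the statement is trivial). Setting $\mathbf{L}_0:=\langle\mathbf{X},S\rangle\cap\langle C,S\rangle^\perp$, one obtains a Lie-orthogonal direct sum $\langle\mathbf{X},S\rangle=\langle C\rangle\oplus\mathbf{L}_0\oplus\langle S\rangle$, and a dimension count identifies $\mathbf{L}_0\oplus\langle S\rangle$ with $\mathbf{L}$.

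Next, for a general $X=\alpha C+Z+\beta S$ with $Z\in\mathbf{L}_0$, the orthogonality relations together with $(C\mid S')=(C\mid C)$ (Proposition \ref{projektorji}(2) applied to $C=\pxv S'$) yield $\Delta(X,S)=(S\mid S)[\alpha^2(C\mid C)+(Z\mid Z)]$ and $(X\mid S')=\alpha(C\mid C)$. Plugging these into the single-cycle formula $\delta_{S'}(x,s)=(S'\mid S')-\frac{(S\mid S)(X\mid S')^2}{\Delta(X,S)}$ recorded just after the definition of $\delta_{S'}$ produces
$$h(X)=(S'\mid S')-\frac{\alpha^2(C\mid C)^2}{\alpha^2(C\mid C)+(Z\mid Z)},$$
a quantity independent of $\beta$. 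Since $(C\mid C)>0$ and $(Z\mid Z)\ge 0$, the subtracted fraction lies in $[0,(C\mid C)]$, so
$$(S'\mid S')-(C\mid C)\le h(X)\le (S'\mid S'),$$
with the right-hand equality precisely when $\alpha=0$, i.e.\ $x\in\mathbf{l}$, and the left-hand equality precisely when $Z=0$, a condition satisfied on the line $\langle C,S\rangle\setminus\langle S\rangle$ and in particular at $c$. By Proposition \ref{ciklivsopu}(1) and (3) these attained values agree with $\delta_{S'}(\mathbf{x},s)$ and $\delta_{S'}(\ell,s)$ respectively. For the domain claim, $\Delta(X,S)=0$ forces $\alpha=Z=0$, i.e.\ $x=s$, but $s\notin\Omega$ since $(S\mid S)<0$, so $h$ is defined throughout $\langle\mathbf{x},s\rangle\cap\Omega$.

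The main obstacle is really just the positivity $(C\mid C)>0$, which ties together the index statement of Proposition \ref{signature-1}(2), the hypothesis $(S\mid S)<0$, and the orthogonality $(C\mid S)=0$ from Proposition \ref{ciklivsopu}(2); once that is in hand the rest is routine bookkeeping in the Lie-orthogonal coordinates $(\alpha,Z,\beta)$.
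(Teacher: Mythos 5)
Your proof is correct, and it takes a genuinely different route from the paper's. The paper reduces to $\langle\mathbf{x},s\rangle\cap\langle s\rangle^\perp$ via the splitting of theorem \ref{discr-family}, writes $h(x)=(S'\mid S')-(X\mid S')^2/(X\mid X)$ there, and then differentiates: the vanishing of $dh$ splits into the two cases $(X\mid S')=0$ (giving $\mathbf{l}$) and $X$ proportional to $C$, so the paper identifies the critical points but leaves the global max/min roles to be inferred from compactness. You instead refine the splitting to a full Lie-orthogonal decomposition $\langle\mathbf{X},S\rangle=\langle C\rangle\oplus\mathbf{L}_0\oplus\langle S\rangle$ (correctly identifying $\mathbf{L}_0\oplus\langle S\rangle$ with $\mathbf{L}$, since $P_{\langle\mathbf{X},S\rangle}\langle C\rangle^\perp=\langle\mathbf{X},S\rangle\cap\langle C\rangle^\perp$), obtain the closed form $h=(S'\mid S')-\alpha^2(C\mid C)^2/(\alpha^2(C\mid C)+(Z\mid Z))$, and read off the two-sided bound $(S'\mid S')-(C\mid C)\le h\le(S'\mid S')$ together with the exact equality loci. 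This buys several things the paper's argument does not make explicit: global rather than merely critical extremality, the identification of $c$ as the minimizer and $\mathbf{l}$ as the maximizing locus (via $(C\mid C)>0$, which you correctly extract from the index-$1$ signature and $(C\mid S)=0$), a clean treatment of the domain ($\Delta(X,S)=0$ only at $s\notin\Omega$), and the degenerate case $C=0$. All the auxiliary identities you invoke — $(C\mid S')=(C\mid C)$, $(Z\mid S')=(Z\mid C)=0$, $(S\mid S')=0$ — check out against propositions \ref{projektorji} and \ref{ciklivsopu}.
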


\begin{proof} The function $h$ is constant on projective lines through $s$,  so $h(x)=h(y)$ for any $x\in\langle\mathbf{x},s\rangle\cap\Omega$ and $y\in
\left<x,s\right>$.  
According to theorem \ref{discr-family},
$$ \langle\mathbf{x},s\rangle=\langle\mathbf{x},s\rangle\cap\langle
                s \rangle^\perp\oplus\langle s\rangle$$ 
                so it suffices to consider values $h(x)$ for $x\in\langle s\rangle^\perp$. Then $(X\mid X)\neq 0$ and 
$$h(x)= (S'\mid S')-\frac{(X\mid S')^2}{(X\mid X)}$$
is well defined. The differential
$$
dh=-\frac{2(X\mid S')(dX\mid S')(X\mid X)-2(dX\mid X)(X\mid S')^2}{(X\mid X)^2}=
-2(X\mid S)\frac{\left(dX\mid (X\mid X)S'-(X\mid S')X\right)}{(X\mid X)^2}.$$ 
is equal to $0$ in two cases: if 
$(X\mid S')=0$ so $x\in\mathbf{l}$, and $h(x)=(S'\mid S')$ and if   
$$\left(dX\mid (X\mid X)S'-(X\mid S')X\right)=0$$ 
for all $dX\in\langle X,S\rangle$.
In this case $dX=P_{\langle X,S\rangle}dX$,  
$$\left(P_{\langle \mathbf{X},S\rangle}dX\mid (X\mid X)S'-(X\mid S')X\right)=
\left(dX\mid (X\mid X)\pxv S'-(X\mid \pxv S')Y\right)$$
$$=
\left(dX\mid (X\mid X)C-(X\mid C)X\right)=0$$
which implies that $X$ is a multiple of $C$.
\end{proof}

\subsection{Subcycles and cones}
Let us take a closer look at the two cases where $(s,S')$ is equal to $(r,W)$ and $(w,R)$. 

\begin{figure}[h!]
\includegraphics[scale=0.8]{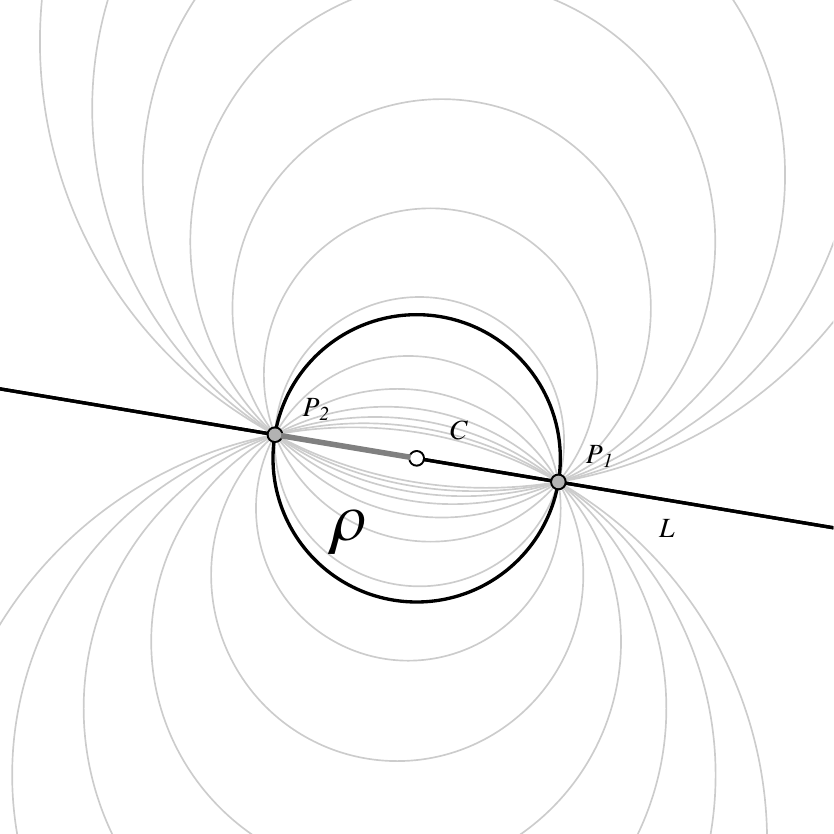}\hfill
\includegraphics[scale=0.8]{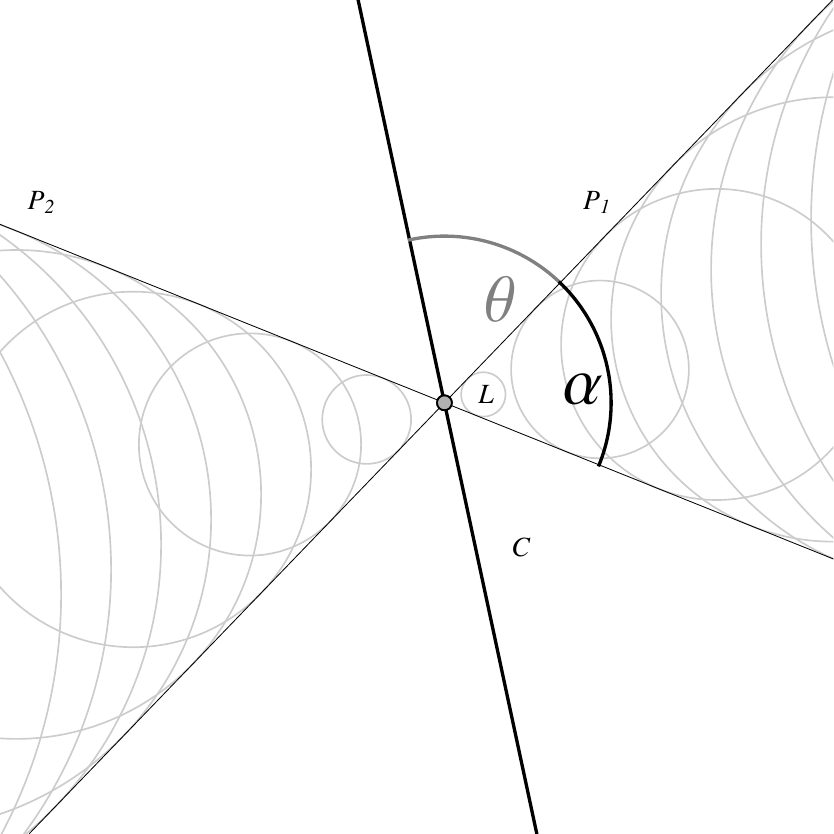}
\caption{In an $r$-family (left) the cycle $c$ projected onto $\Omega$ along $r$ represents the two oriented spheres with the smallest radius, and the subspace 
$\mathbf{l}$ projected onto $\Omega$ along $r$ contains the planes of the family. In a cone family (right) $c$ represents the plane orthogonal to the axis of the cone and $\mathbf{l}$ contains the point cycles of the cone, in particular in $\RR^2$, it is the vertex .}\label{steiner-cone}
\end{figure}

First let $\langle\mathbf{x},r\rangle\cap\Omega$ 
be a hyperbolic $r$-family, that is, a subcycle and let $S'=W$. 
Let $x$ be a cycle from the family.
Then 
$$h(x)=\delta_W(x,r)=\frac{\Delta(X,R,W)}{\Delta(X,R)} = -\frac{1}{\rho^2},$$
where $\rho$ is the radius of the 
corresponding subcycle. Its $W$-discriminant  equals
$$\delta_W(\mathbf{x},r)=-(C\mid C), \quad C=P_{\langle\mathbf{X},R\rangle}W.$$
If $c$ is a non proper cycle and $\delta_W(\mathbf{x},r)\neq 0$, it can be projected onto $\Omega$ along $r$.
We will give a geometric meaning to the discriminant $\delta_W(\mathbf{x},r)$, 
the cycle $c$ and 
the subspace $\mathbf{l}$. 

Assume first that $\delta_W(\mathbf{x},r)\neq 0$.  
The cycle $c=P_{\langle\mathbf{x},r\rangle}(w)$ is a nonproper cycle and according to proposition \ref{ciklivsopu}, belongs to the 
subspace $\langle r \rangle^\perp$. According to theorem \ref{discr-family} it can be projected onto $\Omega$ along $r$, and the projection consists of 
two cycles $c_{1,2}=\langle c, s\rangle\cap\Omega$ which represent one unoriented sphere with both orientations. 
By proposition \ref{ciklivsopu}
\begin{equation}
	\delta_W(\mathbf{x},r)=
	-\frac{(C\mid W)^2}{(C\mid C)}=\frac{\Delta(C_i,R,W)}{\Delta(C_i,R)}=-\frac{1}{\rho^2},
\end{equation} 
where $\rho$ is the radius of the cycles $c_{1,2}$ (see
\ref{steiner-cone} on the left). 
By theorem \ref{ekstremi1}, the spheres $c_{1,2}$ have
the smallest absolute radius in the family.   
Also cycles $\ell\in \mathbf{l} $  can be projected onto the
Lie quadric along $r$. Since $(L\mid W)=0$ holds for all $L\in \mathbf{L} $, the projected cycles represent the planes of the
family. 
The family has no point cycles,
$(\langle \mathbf{x},r\rangle\cap\Omega) \cap\langle
r\rangle^\perp=(\langle \mathbf{x},r\rangle\cap\Omega) \cap(\langle\mathbf{x},r\rangle^\perp\cap\Omega)=\emptyset$.

If $\delta_W(\mathbf{x},r)=0$, the family consists of
planes. In this case the projection $c$ does not exists.  

\subsubsection{\bf{Cones: $s=w$ and $s'=r$}}\label{wr}
The cycles of a hyperbolic $w$-family $\langle \mathbf{x},w \rangle\cap\Omega$ correspond to a family of geometric cycles with common tangent planes. 
The points of tangency form a cone in $\RR^n$, so we will call such a 
family simply a {\em cone}. 
The $R$-discriminant $\delta_R(\mathbf{X},W)$ of such a family is always nonzero since, if 
$\Delta(\mathbf{X},W,R)=0$ 
also $\Delta(\mathbf{X},W)=0$, 
which contradicts the hyperbolicity of the family.  
Using the fact that $(C\mid C)=(R\mid C)$, it is equal to  
\begin{equation}
        \delta_R(\mathbf{x},w)=(P_{\langle\mathbf{X},W\rangle^\perp}R\mid 
		P_{\langle\mathbf{X},W\rangle^\perp}R)=(R-C\mid R-C)=-1-(C\mid C),
\end{equation}  
The cycle
$c$ cannot be projected onto $\Omega$ along $w$ since $(C\mid W)=0$. Its  
projection along $r$ is an unoriented plane with both orientations,  $c_{1,2}$. 
Using coordinates in the chart $\mathcal{U}_r$ we write $C=(\mu,\mathbf{n},0,-\rho)$.  From $(C\mid C)=(R\mid C)=\rho$, we obtain 
$$  
 \|\mathbf{n}\|^2-\rho^2=\rho,\quad
	\|\mathbf{n}\|^2=\rho^2+\rho
$$
Let $C_0=(\mu_0,\mathbf{n}_0,0,\rho_0)=C/\|\mathbf{n}\|$. Then 
$$
	\rho_0=\pm\sqrt{\frac{\rho}{1+\rho}}=\pm\cos\theta
$$
so $|\rho_0|\leq1$ and $\rho\ge 0$.
For an arbitrary cycle $p$ from the cofamily,
\begin{equation*}
        P=\varphi_R(p)=(\mu,\mathbf{n},0,1),\quad 
        (C_0\mid P)=\mathbf{n}_0\cdot\mathbf{n}-\rho_0= 0,\quad
	\mathbf{n}_0\cdot\mathbf{n}=
        \pm\cos\theta
\end{equation*}
The planes forming the cofamily intersect the two planes
$c_{1,2}$ under the angle $\theta$ (see figure \ref{steiner-cone}),
which is complementary to $\alpha/2$ where $\alpha$ is the vertex angle  so  
$$
	\cos^2\theta=\sin^2\frac{\alpha}{2}=\frac{\rho}{\rho+1}\quad\mbox{and}\quad \rho=\tan^2\frac{\alpha}{2}.
$$
The discriminant is then
\begin{equation}\label{cosa}
	\delta_R(\mathbf{x},w)=-1-(C\mid C)=-1-\rho=-1-\tan^2\frac{\alpha}{2}=
	-\frac{1}{\cos^2\frac{\alpha}{2}}.
\end{equation}

All points of the subspace $\mathbf{l}$ can be projected onto $\Omega$ along $w$ and the 
projections represent the point cycles (see \ref{ciklivsopu}). 
In the case of a cone pencil, the subspace $\mathbf{l}$ has
only one cycle representing the vertex of the cone. 
In the case of a cone family generated  by three independent
cycles the cofamily has two elements which represent two tangent planes. The proper cycles in $\mathbf{l}$
are points on the line of intersection of these two planes.  

The family has no planes since 
$$(\langle\mathbf{x},w\rangle\cap\Omega)\cap\langle
w\rangle^\perp\subset (\langle\mathbf{x},w\rangle\cap\Omega)\cap(\langle\mathbf{x},w\rangle^\perp\cap\Omega)=\emptyset.$$  

\section{A family and a cycle}\label{mesan} 
In this section we focus on the relationship between a cycle and an
$s$-family.  Throughout this section we assume that $(S\mid S)\leq 0$, 
$\mathbf{x}=(x_1,\dots,x_k)$, $2\le k\le n-1$,  is a list of proper cycles such that 
$\langle\mathbf{x},s\rangle\cap\Omega$ is a hyperbolic $s$-family, and 
$y\notin\langle\mathbf{x},s\rangle$ is a proper cycle with $(Y\mid S)\neq 0$. 
We define the discriminant of the triple $(\mathbf{x},y,s)$ and describe it in the case $s=r$ and $s=w$ in 
terms of the special determinants of section \ref{simplex}. We next consider the discriminant of a triple $\langle x,y,s\rangle$ with $x\in\mathbf{x}$ as a function $h(x)$ and 
study its critical points.  

\begin{definition}\label{deltas}  The {\em discriminant} of the triple $(\mathbf{x},y,s)$ is the quotient  
  $$\delta(\mathbf{x},y,s) =
  \frac{\Delta(\mathbf{X},Y,S)}{\Delta(\mathbf{X},S)\Delta(Y,S)}.$$
\end{definition}

\begin{theorem} \label{ssopcikel} The
  cofamily $\langle\mathbf{x},s\rangle^\perp\cap\Omega$ contains cycles tangent to $y$ if
  and only if $\delta(\mathbf{x},y,s)\leq 0$. 
\end{theorem}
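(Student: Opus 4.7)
My plan is to reinterpret the condition that the cofamily contains a cycle tangent to $y$ as the non-emptiness of $\langle\mathbf{x},y,s\rangle^\perp\cap\Omega$, and then to read off the answer from Theorem \ref{proj-family} applied to the enlarged subspace $\langle\mathbf{x},y,s\rangle$.

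First I would note that, by the characterization of oriented contact recalled in section \ref{cikli}, a proper cycle $z\in\langle\mathbf{x},s\rangle^\perp\cap\Omega$ is tangent (with compatible orientation) to $y$ precisely when $(Z\mid Y)=0$. So the cofamily contains such a cycle iff $\langle\mathbf{x},y,s\rangle^\perp\cap\Omega\neq\emptyset$. Since $y\notin\langle\mathbf{x},s\rangle$ and $\Delta(\mathbf{X},S)\neq 0$, the vectors $\mathbf{X},Y,S$ are linearly independent, so Theorem \ref{proj-family} applies to $\langle\mathbf{x},y,s\rangle$.

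Next I would match the sign of $\delta(\mathbf{x},y,s)$ to the sign of the determinant $\Delta(\mathbf{X},Y,S)$ that controls Theorem \ref{proj-family}. The denominator is $\Delta(\mathbf{X},S)\,\Delta(Y,S)$. By hyperbolicity of the family, $\Delta(\mathbf{X},S)<0$. Since $y$ is proper we have $(Y\mid Y)=0$, and hence
$$
\Delta(Y,S)=(Y\mid Y)(S\mid S)-(Y\mid S)^2=-(Y\mid S)^2<0,
$$
using the hypothesis $(Y\mid S)\neq 0$. Thus the denominator is positive, and $\delta(\mathbf{x},y,s)$ has the same sign as $\Delta(\mathbf{X},Y,S)$.

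Finally I would conclude by inspecting the three cases of Theorem \ref{proj-family}: if $\Delta(\mathbf{X},Y,S)<0$ both $\langle\mathbf{x},y,s\rangle$ and its Lie orthogonal complement meet $\Omega$, so tangent cycles exist; if $\Delta(\mathbf{X},Y,S)=0$ then $\langle\mathbf{x},y,s\rangle\cap\langle\mathbf{x},y,s\rangle^\perp$ is a nonempty subset of $\Omega$, which again gives a cycle in the cofamily tangent to $y$. The slightly delicate case is $\Delta(\mathbf{X},Y,S)>0$, where exactly one of the two subspaces fails to meet $\Omega$, and I must argue that this must be the dual. That is the only real obstacle, and it is resolved by observing that $y\in\langle\mathbf{x},y,s\rangle\cap\Omega$ is a proper cycle, so $\langle\mathbf{x},y,s\rangle$ itself meets $\Omega$; the missed one must therefore be $\langle\mathbf{x},y,s\rangle^\perp$, and no tangent cycle exists. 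Combining the three cases gives the stated equivalence $\delta(\mathbf{x},y,s)\leq 0$.
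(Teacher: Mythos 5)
Your proof is correct and follows essentially the same route as the paper: reduce tangency to $\langle\mathbf{x},y,s\rangle^\perp\cap\Omega\neq\emptyset$, show the denominator $\Delta(\mathbf{X},S)\Delta(Y,S)$ is positive so that $\delta$ and $\Delta(\mathbf{X},Y,S)$ have the same sign, and invoke Theorem \ref{proj-family}. In fact you are slightly more careful than the paper in the case $\Delta(\mathbf{X},Y,S)>0$, where you explicitly use that $y\in\langle\mathbf{x},y,s\rangle\cap\Omega$ to conclude that the subspace which misses $\Omega$ must be the dual one --- a step the paper leaves implicit.
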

\begin{proof}
Since $\Delta(\mathbf{X},S)<0$ and
$\Delta(Y,S)=(S\mid S)-(Y\mid S)^2<0$ it follows that  
$\delta(\mathbf{x},y,s)\leq 0$ precisely when
$\Delta(\mathbf{X},Y,S)\leq 0$. By  theorem \ref{proj-family} this is equivalent to 
$\langle\mathbf{x},y,s\rangle^\perp\cap\Omega\neq \emptyset$, which is true if and only if   
$\langle\mathbf{x},s\rangle^\perp\cap\Omega$ intersects $\langle y\rangle^\perp$. But this means that there exists a $Z\in \langle\mathbf{X},S\rangle^\perp$ 
such that $(Z\mid Z)=0$ and $(Z\mid Y)=0$, so $z$ is a proper cycle in $\langle\mathbf{x},s\rangle^\perp$ which is tangent to $y$.
\end{proof}

Assume that $y\notin\langle s\rangle^\perp$. Since $y$ is proper  $\Delta(Y,S)=(Y\mid S)\neq 0$, and the discriminant 
$\delta(x,y,s)$ determines a function 
$$h\:\PP^{n+1}\setminus \{x\mid \Delta(X,S)=0\}\to \RR, \quad h(x)=\delta(x,y,s).$$
The function $h$ is constant on projective lines $\left<x,s\right>$. 
If $(S\mid S)<0$ then by theorem \ref{discr-family} any $x\in\langle\mathbf{x},s\rangle$ has two projections onto $\Omega$ along $s$ and $\Delta(X,S)< 0$. So $h(x)$ is defined for all 
$x\in\langle\mathbf{x},s\rangle$, and $h$ achieves its maximum and minimum on the compact set $\langle\mathbf{x},s\rangle\cap\Omega$. 
If $(S\mid S)=0$
the situation is different, since $\Delta(X,S)=0$ for all $x\in\langle s\rangle^\perp\cap\langle x,s\rangle\neq \emptyset$ and so $h(x)$ is not
bounded on $\langle \mathbf{x},s\rangle\cap\Omega$.

\begin{theorem} \label{osopihinciklih}
The discriminant $\delta(\mathbf{x},y,s)$ is equal to the value of $h$ at the Lie projection $\pxm y$ of $y$ onto $\langle\mathbf{x},s\rangle$. The cycle $\pxm y$ is a critical point of $h$. 
If $(S\mid S)<0$, then $h(x)$ has a second critical point at a cycle
$x_0\in\langle\mathbf{x},s\rangle$ where its value is  ${\displaystyle h(x_0)=\frac{1}{(S\mid S)}}$.
\end{theorem}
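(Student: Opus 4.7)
The plan rests on the orthogonal decomposition $Y=C+Y^\perp$, where $C=\pxv Y\in\langle\mathbf{X},S\rangle$ and $Y^\perp=Y-C\in\langle\mathbf{X},S\rangle^\perp$. Three Lie-product identities will drive every subsequent computation: $(C\mid Y)=(C\mid C)$, $(C\mid S)=(Y\mid S)$, and $(C\mid C)+(Y^\perp\mid Y^\perp)=(Y\mid Y)=0$, the last because $y$ is proper.

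I would first establish $h(\pxm y)=\delta(\mathbf{x},y,s)$. Applying Proposition \ref{sprbaze} to the linearly independent tuple $(\mathbf{X},S,Y)$ immediately gives $\delta(\mathbf{x},y,s)=(Y^\perp\mid Y^\perp)/\Delta(Y,S)$. For $h(c)$, substituting the three identities into the $3\times 3$ Gram matrix of $(C,Y,S)$ and performing two row/column operations reduces it to block form and shows $\Delta(C,Y,S)=(Y^\perp\mid Y^\perp)\,\Delta(C,S)$, so division by $\Delta(C,S)\Delta(Y,S)$ yields $h(c)=(Y^\perp\mid Y^\perp)/\Delta(Y,S)$ as well.

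For criticality of $c$, I would compute $dh(X)$ via the Lie-form gradients $\nabla_X\Delta(X,S)=2[(S\mid S)X-(X\mid S)S]$ and $\nabla_X\Delta(X,Y,S)=2[\Delta(Y,S)X-M_{12}Y+M_{13}S]$ with the usual $2\times 2$ cofactors $M_{12},M_{13}$. Writing $u=\Delta(X,Y,S)$ and $v=\Delta(X,S)$, the gradient of the numerator of $h$ is $v\nabla u-u\nabla v$. At $X=C$, the cofactors simplify to $M_{12}(C)=\Delta(C,S)$ and $M_{13}(C)=(C\mid C)(Y\mid S)$, and combined with the algebraic identity $(Y\mid S)^2+(Y^\perp\mid Y^\perp)(S\mid S)=-\Delta(C,S)$ the $C$-, $Y$-, and $S$-components of $v(C)\nabla u(C)-u(C)\nabla v(C)$ collapse to $-2\,\Delta(C,S)^2\,Y^\perp$. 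Since $Y^\perp\in\langle\mathbf{X},S\rangle^\perp$, its Lie pairing with any $dX\in\langle\mathbf{X},S\rangle$ vanishes, so $dh(c)$ annihilates every tangent direction in $\langle\mathbf{x},s\rangle$.

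For the second critical point when $(S\mid S)<0$: hyperbolicity $\Delta(\mathbf{X},S)<0$ combined with $(S\mid S)<0$ forces the Lie form on $V:=\langle\mathbf{X},S\rangle\cap\langle S\rangle^\perp$ to be positive definite, since $S$ spans the unique negative direction of $\langle\mathbf{X},S\rangle$. Pick any nonzero $X_0\in V\cap\langle Y\rangle^\perp$, nonempty because $\dim V=k\ge 2$; then $(X_0\mid X_0)>0$ places $x_0$ in the domain of $h$. The vanishings $(X_0\mid S)=(X_0\mid Y)=0$ together with $(Y\mid Y)=0$ collapse the $3\times 3$ determinant to $\Delta(X_0,Y,S)=(X_0\mid X_0)\,\Delta(Y,S)$, giving $h(x_0)=1/(S\mid S)$. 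Moreover both gradients at $X_0$ reduce to scalar multiples of $X_0$, so $v(X_0)\nabla u(X_0)-u(X_0)\nabla v(X_0)$ lies along $X_0$ itself, a projectively trivial radial direction, making $x_0$ critical. The main obstacle is the gradient calculation at $X=C$: pinning down the two cancellations $(Y\mid S)^2+(Y^\perp\mid Y^\perp)(S\mid S)=-\Delta(C,S)$ and $(C\mid C)+(Y^\perp\mid Y^\perp)=0$ so that only the family-orthogonal component $Y^\perp$ survives; the remaining steps are direct substitution and invocation of propositions already proved.
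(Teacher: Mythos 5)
Your proof is correct and follows essentially the same route as the paper's: both rest on the decomposition $Y=\pxv Y+P_{\langle\mathbf{X},S\rangle^\perp}Y$, on Proposition \ref{sprbaze} for identifying $\delta(\mathbf{x},y,s)$ with $(P_{\langle\mathbf{X},S\rangle^\perp}Y\mid P_{\langle\mathbf{X},S\rangle^\perp}Y)/\Delta(Y,S)$, and on direct differentiation of the discriminant, with the critical points characterized by orthogonality to $P_{\langle\mathbf{X},S\rangle^\perp}Y$ (for $c$) and to $Y$ and $S$ (for $x_0$). The one genuine organizational difference is in the criticality argument for $c$: the paper normalizes coordinates ($(Y\mid S)=1$, and $(S\mid S)=-1$ with $x$ restricted to $\langle s\rangle^\perp$ in the hyperbolic case) and splits into the cases $(S\mid S)=0$ and $(S\mid S)<0$, whereas your homogeneous cofactor computation, showing that the Lie gradient of the numerator of $dh$ at $X=C$ collapses to $-2\Delta(C,S)^2\,P_{\langle\mathbf{X},S\rangle^\perp}Y$, treats both cases in one stroke; the case split is then needed only to produce the second critical point, which is where it genuinely belongs. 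Your $x_0$, a positive vector of $\langle\mathbf{X},S\rangle\cap\langle S\rangle^\perp\cap\langle Y\rangle^\perp$ (nonempty since $k\ge2$ and the Lie form is positive definite on $\langle\mathbf{X},S\rangle\cap\langle S\rangle^\perp$ by Theorem \ref{discr-family}), is exactly the paper's locus $(X_0\mid Y)=0$, $(X_0\mid S)=0$, and your observation that there the gradient is purely radial, hence projectively trivial, is a clean way to finish. One shared blemish: both you and the paper implicitly assume $\Delta(C,S)\neq0$, i.e.\ $\pxm y\neq s$; for $(S\mid S)=0$ this follows from the standing hypothesis $(Y\mid S)\neq0$, but for $(S\mid S)<0$ it is an unstated nondegeneracy assumption that would be worth flagging.
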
 

\begin{proof}
Let $(Y\mid S)=1$, that is, $Y=\varphi_S(y)$. Then  
$$h(x)=\frac{(X\mid X)+\left(-2(X\mid S)+(S\mid S)(X\mid Y)\right)(X\mid Y)}{-(X\mid S)^2+(X\mid X)(S\mid S)}.$$ 

  By propositions \ref{sprbaze}
  and \ref{projektorji},
  $$\delta(\mathbf{x},y,s)=-\frac{\Delta(\mathbf{X},Y,S)}{\Delta(\mathbf{X},S)}=
	-\Delta(P_{\langle\mathbf{X},S\rangle^\perp}Y)=
  -\Delta(Y-\pxv Y)=(\pxv Y\mid Y).$$
  On the
  other hand,
  $$\Delta(\pxv Y,Y,S)=\Delta(\pxv Y,S)
	\Delta(P_{\langle\mathbf{X},S\rangle^\perp}Y)=
-(\pxv Y\mid Y)\Delta(\pxv Y,S),$$
  so 
  $$h(\pxm(y))=-\frac{\Delta(\pxv Y,Y,S)}
	{\Delta(\pxv Y,S)}
  =(\pxv Y\mid Y)=\delta(\mathbf{x},s,y)$$
  and the first statement is true.

 Now let $x\in\langle \mathbf{x}, s\rangle$ be different 
from $s$ and let $(X\mid S)=1$. Since $(Y\mid S)=1$, also 
$$(P_{\langle X,S\rangle}Y\mid S)=(Y\mid P_{\langle X,S\rangle}S)=(Y\mid S)=1.$$
  We will consider the cases when $(S\mid S)<0$ and $(S\mid S)=0$ separately.  
  \begin{itemize}	
	\item In the case $(S\mid S)=0$, 
	the function $h(x)$ is defined  for 
	$x\in\langle\mathbf{x},s\rangle\setminus \langle s \rangle^\perp$ and  
	$$h(x)=(X\mid X)-2(X\mid Y),$$
	so
	$$dh=2(dX\mid X)-2(dX\mid Y)=2(dX\mid X-Y).$$
	For $X=\pxv Y$, 	
	$$(dX\mid X-Y)=(dX\mid \pxv Y-Y)=-(dX\mid \pxv ^\perp Y)=0$$
	for all $dX\in\langle\mathbf{X},S\rangle$.
	\item In the second case we may assume that $(S\mid S)=-1$. Since $h(x)$
	is constant on projective 
	lines and since every projective line  $\langle x, s\rangle$  intersects $\langle s\rangle^\perp$ we may pick $x\in
	\langle\mathbf{x},s\rangle\cap\langle
	s\rangle^\perp$. Then,  
	$$h(x)=-1+\frac{(X\mid Y)^2}{(X\mid X)}.$$
	and 
	$$
	  dh=-\frac{2(X\mid Y)}{(X\mid X)^2}\left((X\mid X)(dX\mid Y)-(X\mid Y)(dX\mid X)\right)
	$$ 
	equals $0$ if either $(X\mid Y)=0$
	or $\left(dX\mid(X\mid X)Y-(X\mid Y)X\right)=0$. 
	
	Let $X=\pxv Y$. Then 
		$$
		\left(dX\mid(X\mid X)Y-(X\mid Y)X\right)=
		(\pxv Y\mid Y)\left(dX\mid Y-\pxv Y\right)=0
	$$
	for all $dX\in\langle\mathbf{X},S\rangle$, since
	$Y-\pxv Y=\pxv ^\perp Y$, so $X$ is a critical point of $h$. 
	
	The second critical point is at the cycle $x_0$ where $(X_0\mid Y)=0$ and $h(x_0)=-1$.
 \end{itemize}
\end{proof}

Let us consider the two special cases $S=R$ and $S=W$.
\begin{enumerate}
\item The case of $S=R$. Without loss of generality we
can assume that all cycles $\mathbf{x}$ represent spheres. Then
	\begin{eqnarray*}\delta(\mathbf{x},y,r)&=&
	\frac{\Delta(\varphi_R(\mathbf{x}),\varphi_R(y),R)}
	{\Delta(\varphi_R(\mathbf{x}),R)\Delta(\varphi_R(y),R)}=-
		\frac{\Delta(\varphi_W(\mathbf{x}),\varphi_W(y),R)}
        {\Delta(\varphi_W(\mathbf{x}),R)(\varphi_W(y)\mid R)^2}=\\ &=&
	-\frac{(k+1)^2\vol(q_1,\ldots,q_k,q_y,p)}{\vol(q_1,\ldots,q_k,p)^2\rho^2}=-\frac{v^2}{\rho^2}=-\sin^2\alpha,\end{eqnarray*}
        where $q_i$ and $q_y$ are centers of the spheres 
$x_i$, $i=1,\ldots,k$, and $y$, respectively, 
$p$ is a point in the intersection,
        $\rho$ is the radius of the cycle $y$ and $v$ 
is the height of the simplex spanned by 
$(q_1,\ldots,q_k,q_y,p)$ to the base simplex spanned by 
        $(q_1,\ldots,q_k,p)$, and 
        $\alpha$ is the angle between the plane of the base
        simplex and the line segment
        which connects the center $q_y$ with an intersection point $p$. 
        
        \item The case of $S=W$. In this case
        $$\delta(\mathbf{x},y,w)=
	\frac{\Delta(\varphi_W\mathbf{x},\varphi_W(y),W)}{
	\Delta(\varphi_W(\mathbf{x}),W)\Delta(\varphi_W(y),W)}
        =-\frac{k^2\vol(q_1,\ldots,q_k,q_y)^2}{\vol(q_1,\ldots,q_k)^2}=-v^2,
	$$
	where the points $(q_1,\ldots,q_k,q_y)$ span the contact simplex that lies in the common
	tangent plane of the cycles $(\mathbf{x},y)$,  
	$(q_1,\ldots,q_k)$ span the contact simplex of $\mathbf{x}$ (see \eqref{Vol}) and 
	$v$ is the height
	of the first simplex above to the second. The height $v$ is 
	the tangential distance from the cycle $y$ to
	the cone.

\end{enumerate}

\section{Two families}\label{dvaenaka}
In this section we consider the relation between two $s$-families of the same dimension. We define the discriminant of the triple $(\mathbf{x},\mathbf{y},s)$ and show that 
it can be expressed in terms of the eigenvalues of the product $\pxv \pyv $. We discuss the critical points of the function obtained by considering the determinant of triples $(x,y,s)$, 
$x\in \langle\mathbf{x},s\rangle$, $y\in\langle\mathbf{y},s\rangle$ as a function of $(x,y)$ and show that they are closely associated to the fixed points of the products $\pxm\pym$ and $\pym\pxm$ (which correspond to the eigenvectors of the products 
$\pxv \pyv $ and $\pyv \pxv $). We also give a geometric interpretation in the two cases $s=r$ and $s=w$.  

\begin{definition}
The {\em discriminant} of two hyperbolic $s$-families $\langle\mathbf{x},s\rangle\cap\Omega$ and $\langle\mathbf{y},s\rangle\cap\Omega$
spanned by linearly independent proper cycles $\mathbf{x}=(x_1,\dots,x_{k})$ and $\mathbf{y}=(y_1,\dots,y_{k})$ is defined by
\begin{equation}
\delta(\mathbf{x},\mathbf{y},s)=
\frac{\Delta(\mathbf{X},\mathbf{Y},S)}{\Delta(\mathbf{X},S)\Delta(\mathbf{Y},S)}.
\end{equation}
\end{definition}

Because of the hyperbolicity of the two families and the linear independence of the spanning cycles, the family 
$\langle\mathbf{x},\mathbf{y},s\rangle\cap\Omega$ 
is hyperbolic if and only if $\delta(\mathbf{x},\mathbf{y},s)<0$.  

\begin{proposition}\label{dvasopa1}
The value $\delta(\mathbf{x},\mathbf{y},s)$ of the discriminant of
two hyperbolic $s$-families $\langle\mathbf{x},s\rangle\cap\Omega$ and $\langle\mathbf{y},s\rangle\cap\Omega$ is equal
to $0$ if either they contain a common cycle 
$z\in \langle\mathbf{x},s\rangle\cap\langle\mathbf{y},s\rangle$ 
or the corresponding cofamilies have a nonempty intersection.
\end{proposition}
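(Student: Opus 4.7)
The plan is to reduce the proposition to showing that $\Delta(\mathbf{X},\mathbf{Y},S)=0$ under each of the two stated sufficient conditions; this reduction is immediate, since by the hyperbolicity of both $s$-families the denominators $\Delta(\mathbf{X},S)$ and $\Delta(\mathbf{Y},S)$ in the definition of $\delta(\mathbf{x},\mathbf{y},s)$ are strictly negative and thus nonzero.

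The common-cycle case is handled by a direct linear-dependence argument. If $z\in\langle\mathbf{x},s\rangle\cap\langle\mathbf{y},s\rangle$ is any common projective point distinct from $s$, its coordinate vector $Z$ admits two distinct representations $Z=\sum a_iX_i+\alpha S=\sum b_jY_j+\beta S$; subtracting these yields a nontrivial linear relation among $(\mathbf{X},\mathbf{Y},S)$, so the Gram matrix $\mathbf{A}_{\mathbf{X},\mathbf{Y},S}$ has dependent rows and $\Delta(\mathbf{X},\mathbf{Y},S)=0$ at once.

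For the cofamily case, let $u\in\langle\mathbf{x},s\rangle^\perp\cap\langle\mathbf{y},s\rangle^\perp\cap\Omega$. Then the coordinate vector $U$ is Lie-orthogonal to every $X_i$, $Y_j$, and $S$, and satisfies $(U\mid U)=0$. The plan is to form the $(2k+2)\times(2k+2)$ Gram matrix of the enlarged tuple $(\mathbf{X},\mathbf{Y},S,U)$: by the orthogonality conditions it is block-diagonal with $\mathbf{A}_{\mathbf{X},\mathbf{Y},S}$ and a $1\times 1$ zero block, so its determinant vanishes. Combining this with proposition \ref{razbiti-na-produkt} applied to the partition $(\mathbf{X},\mathbf{Y},S)\cup(U)$, and using that $P_{\langle\mathbf{X},\mathbf{Y},S\rangle^\perp}U=U$, collapses the enlarged determinant to $\Delta(\mathbf{X},\mathbf{Y},S)\cdot(U\mid U)=0$, which will give the desired conclusion provided one has $U\in\langle\mathbf{X},\mathbf{Y},S\rangle$.

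The hard part will be justifying, in the cofamily case, precisely the final step — that $U$ actually lies in $\langle\mathbf{X},\mathbf{Y},S\rangle$ (equivalently, that the Lie form is degenerate on $\langle\mathbf{X},\mathbf{Y},S\rangle$ itself rather than merely on its enlargement by $U$). My plan for this step is to invoke theorem \ref{proj-family} for the combined family $\langle\mathbf{x},\mathbf{y},s\rangle$: this family is nonempty because it contains both constituent hyperbolic families, and its cofamily is nonempty because it contains $u$, so the determinant is forced to be nonpositive. Combined with the signature decomposition of theorem \ref{discr-family}, in which $\langle S\rangle$ is the unique negative direction of each hyperbolic constituent, a signature count on $\langle\mathbf{X},\mathbf{Y},S\rangle$ should rule out the strict inequality and force $\Delta(\mathbf{X},\mathbf{Y},S)=0$.
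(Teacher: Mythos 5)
Your reduction and your treatment of the common-cycle case are exactly the paper's: a common cycle $z\neq s$ gives a nontrivial linear relation among $(\mathbf{X},\mathbf{Y},S)$, hence a singular Gram matrix. The problem is the cofamily case, where you have correctly put your finger on the crux --- everything hinges on the common cofamily vector $U$ lying in $\langle\mathbf{X},\mathbf{Y},S\rangle$ and not merely in its Lie orthogonal complement (your enlarged determinant $\Delta(\mathbf{X},\mathbf{Y},S,U)=0$ is by itself vacuous, being $\Delta(\mathbf{X},\mathbf{Y},S)\cdot(U\mid U)=\Delta\cdot 0$) --- but the argument you propose for this step does not close it. Theorem \ref{proj-family} only yields $\Delta(\mathbf{X},\mathbf{Y},S)\le 0$, and the signature count cannot exclude $\Delta(\mathbf{X},\mathbf{Y},S)<0$: that inequality means the Lie form is nondegenerate of index $1$ on $\langle\mathbf{X},\mathbf{Y},S\rangle$, so its orthogonal complement is nondegenerate of index $1$ as well, and as soon as that complement has dimension at least $2$ it contains nonzero isotropic vectors, i.e.\ the two cofamilies can intersect while $\Delta(\mathbf{X},\mathbf{Y},S)<0$. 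Nor does theorem \ref{discr-family} rescue this: it splits each of $\langle\mathbf{X},S\rangle$ and $\langle\mathbf{Y},S\rangle$ into a positive definite part plus $\langle S\rangle$, but a sum of two positive definite subspaces need not be positive definite, so no control on the index of the join follows.

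For comparison, the paper's proof skips your detour entirely: it reads the relations $(Z\mid X_i)=(Z\mid Y_i)=(Z\mid S)=0$ as a homogeneous linear system whose coefficient matrix is the Gram matrix of $(\mathbf{X},\mathbf{Y},S)$, which presupposes exactly the membership $Z\in\langle\mathbf{X},\mathbf{Y},S\rangle$ that you flag, and asserts it without argument. That membership is automatic when $\langle\mathbf{X},\mathbf{Y},S\rangle^\perp$ is one-dimensional: an isotropic line is contained in its own orthogonal complement, which by double duality is $\langle\mathbf{X},\mathbf{Y},S\rangle$. This covers the situation $2k+1=n+2$ arising in the paper's applications, such as two pencils in $\RR^3$. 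With that dimension hypothesis and this one-line justification your argument closes; without it, the step remains a genuine gap --- one the paper's own proof also glosses over.
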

\begin{proof}
In the first case, the intersection $\left<\mathbf{X},S\right>\cap
\left<\mathbf{Y},S\right>$ contains a vector $Z\notin\langle S\rangle$, so the
columns of $\Delta(\mathbf{X},\mathbf{Y},S)$ are linearly dependent.  In the
second case, the equation $(Z\mid Z)=0$ has a nontrivial solution in
the subspace $\langle\mathbf{X},\mathbf{Y},S\rangle$. The resulting homogeneous
system
$$
(Z\mid S)=0,\quad (Z\mid X_i)=0,\quad (Z\mid Y_i)=0,\quad i=1,\dots,k
$$ 
thus has a nontrivial solution and the coefficient matrix of
this system, which equals $\Delta(\mathbf{X},\mathbf{Y},S)$, must be $0$.
\end{proof}

\subsection{Eigenvalues and eigenvectors of a product of two projectors}
We will need some properties of the eigenvectors and eigenvalues of the products $\pxv \pyv$ and $\pyv \pxv$ which we prove in this section. 
Since $S\in \langle\mathbf{X},S\rangle\cap\langle\mathbf{Y},S\rangle$, one of the eigenvalues is $1$, and one of the corresponding eigenvectors is $S$. 
The eigenvectors corresponding to the eigenvalue $1$ are precisely the vectors in the intersection $\langle\mathbf{X},S\rangle\cap\langle\mathbf{Y},S\rangle$. 

\begin{proposition}\label{lastnevrednosti}
The products of the Lie projectors $\pxv \pyv $ and
  $\pyv \pxv $ have the same
  eigenvalues.  If $E$ is an eigenvector of $\pxv \pyv$ then $F=\pyv E$ is 
an eigenvector of $\pyv \pxv$. 
  Eigenvectors which belong to different nonzero eigenvalues are Lie orthogonal.
\end{proposition}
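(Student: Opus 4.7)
The plan is to lean on the self-adjointness of the Lie projections (Proposition \ref{projektorji}(2)) together with the standard algebraic trick that relates the spectra of $AB$ and $BA$. Throughout, both $\pxv$ and $\pyv$ are Lie self-adjoint projections, so we are effectively asked to analyze a product of two self-adjoint idempotents with respect to an indefinite form; the indefiniteness plays no role because we only need orthogonality, not reality of eigenvalues.

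First I would handle the intertwining. Suppose $\pxv \pyv E = \lambda E$ with $E \neq 0$, and set $F := \pyv E$. Applying $\pyv$ on the left gives
\begin{equation*}
\pyv \pxv F = \pyv \pxv \pyv E = \pyv (\lambda E) = \lambda F,
\end{equation*}
so as soon as $F \neq 0$, it is an eigenvector of $\pyv \pxv$ with the same eigenvalue. If $\lambda \neq 0$ then $F = \pyv E = 0$ would force $\pxv \pyv E = 0 = \lambda E$, contradicting $E \neq 0$; hence the assignment $E \mapsto \pyv E$ is a nonzero map from the $\lambda$-eigenspace of $\pxv \pyv$ to that of $\pyv \pxv$ for every nonzero $\lambda$. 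By symmetry (swap the roles of $\mathbf{X}$ and $\mathbf{Y}$) the same is true in the other direction, so the nonzero parts of the two spectra coincide. For $\lambda = 0$, I would invoke $\det(\pxv \pyv) = \det(\pyv \pxv)$: zero is an eigenvalue of one iff it is an eigenvalue of the other. This establishes the claims about eigenvalues and the intertwining map $E \mapsto \pyv E$.

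Next I would prove Lie orthogonality of eigenvectors with distinct nonzero eigenvalues. Let $\pxv \pyv E_1 = \lambda_1 E_1$ and $\pxv \pyv E_2 = \lambda_2 E_2$ with $\lambda_1 \neq \lambda_2$, both nonzero. Since $\lambda_i \neq 0$, each $E_i$ lies in the image of $\pxv$, i.e.\ $\pxv E_i = E_i$. Using the self-adjointness of $\pxv$ and $\pyv$ from Proposition \ref{projektorji}(2),
\begin{equation*}
\lambda_1 (E_1 \mid E_2) = (\pxv \pyv E_1 \mid E_2) = (\pyv E_1 \mid \pxv E_2) = (\pyv E_1 \mid E_2) = (E_1 \mid \pyv E_2),
\end{equation*}
and symmetrically
\begin{equation*}
\lambda_2 (E_1 \mid E_2) = (E_1 \mid \pxv \pyv E_2) = (\pxv E_1 \mid \pyv E_2) = (E_1 \mid \pyv E_2).
\end{equation*}
Subtracting the two identities yields $(\lambda_1 - \lambda_2)(E_1 \mid E_2) = 0$, whence $(E_1 \mid E_2) = 0$, as required.

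The main thing to keep straight is which operator each eigenvector belongs to: the map $E \mapsto \pyv E$ moves $\pxv \pyv$-eigenvectors to $\pyv \pxv$-eigenvectors, and the orthogonality argument works cleanly only when both eigenvectors live for the \emph{same} product operator, so that the identity $\pxv E_i = E_i$ can be exploited. The minor technical point is the zero-eigenvalue case, which is handled separately by the determinant argument above since the intertwining map $E \mapsto \pyv E$ can collapse when $\lambda = 0$.
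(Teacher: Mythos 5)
Your proof is correct and follows essentially the same route as the paper: the intertwining comes from left-multiplying the eigenvalue equation by $\pyv$, and the orthogonality comes from the self-adjointness of the Lie projections (Proposition \ref{projektorji}) combined with the fact that eigenvectors for nonzero eigenvalues satisfy $\pxv E = E$ (the paper evaluates $(\pxv\pyv E_i\mid \pxv\pyv E_j)$ in two ways to get $\lambda_i\lambda_j$ versus $\lambda_i^2$, while you evaluate $(E_1\mid\pyv E_2)$ in two ways, but this is only a reorganization of the same computation). Your extra care with the degenerate cases ($F=0$ when $\lambda\neq 0$, and the zero eigenvalue via determinants) goes slightly beyond what the paper writes down, but does not change the method.
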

\begin{proof}
Multiplying both sides of the equation 
	$\pxv \pyv E=\lambda
	E$ by
	$\pyv $, we get\\
	$$\pyv 
	\pxv \pyv E=\lambda
        \pyv E.$$ If 
	$\pyv E=F$, it follows that   
	$\pyv \pxv F=\lambda
        F$. So the vector $F$ is an eigenvector of
	$\pyv \pxv $
	corresponding to the same eigenvalue $\lambda$.	
	
   Let $E_i$ and $E_j$ be eigenvectors of 
	$\pxv \pyv $,
	corresponding to eigenvalues 
	$\lambda_i$ and $\lambda_j$ respectively, where 
	$\lambda_i\ne \lambda_j$
	and $\lambda_i,\lambda_j\ne0$. Then
	\begin{equation*}
	(\pxv \pyv E_i\mid 
	\pxv \pyv E_j)=
	\lambda_i\lambda_j(E_i\mid E_j).
	\end{equation*}
	On the other hand
	\begin{eqnarray*}
	(\pxv \pyv E_i\mid 
        \pxv \pyv E_j)&=& 
	(\pyv 
        \pxv \pyv E_i\mid E_j)\\
	&=&(\pyv 
        \pxv \pyv 
	E_i\mid \pxv E_j)\\
	&=&
	(\pxv \pyv 
        \pxv \pyv E_i\mid E_j)\\
        &=&
	\lambda_i^2(E_i\mid E_j). 
	\end{eqnarray*}
	Since $\lambda_i\ne\lambda_j$ it follows that
	$(E_i\mid E_j)=0.$ 
\end{proof}

From now on, let $\mathbf{E}$ and $\mathbf{F}$ denote lists of linearly independent eigenvectors of $\pxv \pyv$ and $\pyv \pxv$, respectively,  which are independent from $S$.

\begin{proposition}
  Assume that $\delta(\mathbf{x},\mathbf{y},s)<0$. 
	 \begin{enumerate} 
  \item The eigenvalues of the products $\pxv    \pyv    $ and $\pyv \pxv$ different from zero are positive.
	The restriction of the Lie form to $\langle\mathbf{E}\rangle$ and $\langle\mathbf{F}\rangle$
	is positive definite. 
  \item If $E\in\mathbf{E}$ and $F\in\mathbf{F}$ are eigenvectors belonging to the same eigenvalue such that $(E\mid E)=1$
	and  $(F\mid F)=1$, then
	$(E\mid F)=\sqrt{\lambda}$
	where  $\lambda$ is the corresponding eigenvalue.
	
  \item  If $(S\mid S)<0$, 
	then all eigenvalues are nondegenerate, and 
	the corresponding eigenvectors span
	the subspaces $\langle\mathbf{X},S\rangle$ and 
	$\langle\mathbf{Y},S\rangle$. All cycles 
	$e\in\langle\mathbf{e}\rangle$ and $f\in\langle\mathbf{f}\rangle$ different from $s$, can be  
        projected onto the Lie quadric along $s$.\\
	If $(S\mid S)=0$ the eigenvalue 1 is degenerate and the
	cycles, corresponding to the eigenvectors independent from $S$,  
	cannot be projected onto the Lie quadric along $s$.   
  \end{enumerate}
\end{proposition}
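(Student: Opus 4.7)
The plan is to derive a master identity relating $\lambda$, $(E\mid E)$ and $(F\mid F)$ from the self-adjointness of the Lie projections, and then to control signs using the hyperbolic decomposition of $\langle\mathbf{X},S\rangle$ supplied by Theorem \ref{discr-family}. Fix $E$ with $T(E)=\lambda E$, where $T:=\pxv\pyv$ and $\lambda\neq 0$; then $E\in\langle\mathbf{X},S\rangle$ and $F:=\pyv E$ is an eigenvector of $\pyv\pxv$ with the same $\lambda$ by Proposition \ref{lastnevrednosti}. Using the symmetry and idempotency of Proposition \ref{projektorji},
$$\lambda(E\mid E)=(\pxv F\mid E)=(F\mid\pxv E)=(F\mid E)=(\pyv E\mid E)=(\pyv E\mid\pyv E)=(F\mid F).$$
A separate application gives $(T(E)\mid S)=(E\mid\pyv\pxv S)=(E\mid S)$, so $(\lambda-1)(E\mid S)=0$ and every eigenvector with $\lambda\neq 1$ is Lie-orthogonal to $S$.

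For part (1), Theorem \ref{discr-family} ensures that every $x\in\langle\mathbf{x},s\rangle\cap\langle s\rangle^\perp$ different from $s$ satisfies $(X\mid X)>0$, whether $(S\mid S)<0$ or $(S\mid S)=0$. Proposition \ref{dvasopa1} combined with $\delta(\mathbf{x},\mathbf{y},s)<0$ rules out $\lambda=1$ for any eigenvector independent from $S$: such an $E$ would lie in $\langle\mathbf{X},S\rangle\cap\langle\mathbf{Y},S\rangle$ and force $\delta=0$. Hence every $E\in\mathbf{E}$ has $(E\mid S)=0$ and $(E\mid E)>0$; symmetrically $(F\mid F)>0$, and $\lambda=(F\mid F)/(E\mid E)>0$. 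Eigenvectors for distinct eigenvalues are Lie-orthogonal by Proposition \ref{lastnevrednosti}, and $\mathbf{E}\subset\langle\mathbf{X},S\rangle\cap\langle S\rangle^\perp$, where the Lie form is positive semidefinite with radical $\langle S\rangle$ (excluded by independence from $S$); therefore the Lie form is positive definite on $\langle\mathbf{E}\rangle$ and, by the parallel argument, on $\langle\mathbf{F}\rangle$.

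For part (2), given $E$ with $(E\mid E)=1$ I would take $F_0:=\pyv E$; the master identity gives $(F_0\mid F_0)=\lambda$, and renormalizing $F:=F_0/\sqrt{\lambda}$ yields $(F\mid F)=1$ with
$$(E\mid F)=\frac{(E\mid F_0)}{\sqrt{\lambda}}=\frac{(F_0\mid F_0)}{\sqrt{\lambda}}=\sqrt{\lambda}.$$
For part (3) when $(S\mid S)<0$, Theorem \ref{discr-family}(1) supplies the $T$-invariant direct sum $\langle\mathbf{X},S\rangle=\langle S\rangle\oplus(\langle\mathbf{X},S\rangle\cap\langle S\rangle^\perp)$ whose second summand is positive definite of dimension $k$; $T$ restricted there is self-adjoint, hence orthogonally diagonalizable, yielding $k$ eigenvectors independent from $S$ which together with $S$ span $\langle\mathbf{X},S\rangle$. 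Each such $e$ satisfies $\Delta(E,S)=(E\mid E)(S\mid S)<0$, so the projection lemma supplies two projections of $e$ onto $\Omega$ along $s$. When $(S\mid S)=0$, Proposition \ref{dvasopa1} forces $\langle\mathbf{X},S\rangle\cap\langle\mathbf{Y},S\rangle=\langle S\rangle$, so $\lambda=1$ has only $S$ as a genuine eigenvector, while the induced action of $T$ on the one-dimensional quotient $\langle\mathbf{X},S\rangle/(\langle S\rangle^\perp\cap\langle\mathbf{X},S\rangle)$ is the identity, contributing extra algebraic multiplicity to $\lambda=1$ not absorbed by a true eigenvector; hence $1$ is degenerate. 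Any true eigenvector independent from $S$ then has $\lambda\neq 1$, so $(E\mid S)=0$, and with $(S\mid S)=0$ this forces $\Delta(E,S)=-(E\mid S)^2=0$, so the projection lemma forbids a projection onto $\Omega$ along $s$ distinct from $s$.

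The main obstacle I expect is the $(S\mid S)=0$ portion of part (3): the Lie form degenerates on $\langle\mathbf{X},S\rangle\cap\langle S\rangle^\perp$, the spectral theorem does not apply directly, and the degeneracy of eigenvalue $1$ must be extracted from a Jordan argument on the hyperplane/quotient decomposition together with Proposition \ref{dvasopa1} and the strict inequality $\delta(\mathbf{x},\mathbf{y},s)<0$ to exclude an honest extra eigenvector in $\langle\mathbf{X},S\rangle\cap\langle\mathbf{Y},S\rangle$ beyond $\langle S\rangle$.
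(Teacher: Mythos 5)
Your proof is correct and follows essentially the same route as the paper: the chain $\lambda(E\mid E)=(F\mid F)$ via the self-adjointness and idempotency of the Lie projections, positivity of $(E\mid E)$ and $(F\mid F)$ from Theorem \ref{discr-family} after excluding extra eigenvalue-$1$ eigenvectors using $\delta(\mathbf{x},\mathbf{y},s)<0$, and the normalization computation for $(E\mid F)=\sqrt{\lambda}$. The only difference is that for part (3) the paper simply writes that it ``follows directly from theorem \ref{discr-family}'', whereas you supply the details (spectral theorem on the positive definite summand when $(S\mid S)<0$, and the Jordan/quotient argument for the degeneracy of the eigenvalue $1$ when $(S\mid S)=0$), which is a welcome elaboration rather than a departure.
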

\begin{proof}
\begin{enumerate}
\item 	Since $\delta(\mathbf{x},\mathbf{y},s)<0$, $S$ is the only eigenvector corresponding to $1$ and by proposition \ref{lastnevrednosti} 
	$\langle\mathbf{E}\rangle\subset S^\perp$ and also $\langle\mathbf{F}\rangle\subset S^\perp$. By theorem 
	\ref{discr-family} this implies that $(E\mid E)> 0$ and $(F\mid F)> 0$ for all $E\in \mathbf{E}$ and $F\in\mathbf{F}$, and the restriction of the Lie form to the span
	$\langle\mathbf{E}\rangle$ and $\langle\mathbf{F}\rangle$ 
	of eigenvectors is positive definite. 
	Also,  
	\begin{eqnarray*}
	(F\mid F)&=&(\pyv    E\mid \pyv    E)=(\pyv    E\mid E)\\
	&=&(\pyv    E\mid \pxv    E)=(\pxv    \pyv    E\mid E)\\
	&=&\lambda(E\mid E)>0,\end{eqnarray*}
	so $\lambda>0$.
\item If $E$ and $F$ are eigenvectors belonging to the same eigenvalue $\lambda$ and $(E\mid E)=1$ and $(F\mid F)=1$ 
then $\pyv    E=c F$, $(c F\mid c F)=c^2$. On the other hand 
\begin{eqnarray*}
(c F\mid c F)&=&(\pyv    E\mid \pyv    E)=(\pyv    E\mid E)\\
&=&(\pyv    E\mid \pxv    E)=(\pxv    \pyv    E\mid E)=\lambda,\end{eqnarray*}
so $c=\sqrt{\lambda}$. 
Also,
$$(E\mid F)=(E\mid \frac{1}{\sqrt{\lambda}}\pyv    E)=(\pxv    E\mid \frac{1}{\sqrt{\lambda}}\pyv    E)=
(E\mid \frac{1}{\sqrt{\lambda}}\pxv    \pyv    E)=\sqrt{\lambda}$$
\item This follows directly from theorem \ref{discr-family}. 
\end{enumerate}
\end{proof}
\subsection{The discriminant}
Let $\langle\mathbf{x},s\rangle\cap\Omega$ and $\langle\mathbf{y},s\rangle\cap\Omega$ be two
$k$-dimensional hyperbolic families with 
$\delta(\mathbf{x},\mathbf{y},s)<0$. 
We discuss the cases $(S\mid S)<0$ and $(S\mid S)=0$ separately. 
\subsubsection{The case of $(S\mid S)<0$}\label{steiner-base}
In this case $(\mathbf{E},S)$ and  $(\mathbf{F},S)$ are bases of $\langle\mathbf{X},S\rangle$ and  $\langle\mathbf{Y},S\rangle$ consisting of eigenvectors of 
the products $\pxv\pyv$ and $\pyv\pxv$, respectively, such that 
$(E_i\mid E_j)=\delta_{ij}$, $(F_i\mid F_j)=\delta_{ij}$, $F_i=\pxv E_i$ and 
$(E_i\mid F_j)=\sqrt{\lambda_i}\delta_{ij}$ for all $i,j=1,\ldots,k$. 
Each cycle $e_i\in\mathbf{e}$ and $f_j\in\mathbf{f}$ can be projected onto the Lie quadric in the
direction of $s$. The discriminant equals
$$
	\delta(\mathbf{x},\mathbf{y},s)=
	  \left|\begin{array}{ccccccc}
          1 & 0 &\cdots & \sqrt{\lambda_1} & 0 & 0 \\
          0 & 1 &\cdots & 0 & \sqrt{\lambda_2} & 0 \\
	  \hdotsfor{6}\\
          \sqrt{\lambda_1} & 0 &\cdots& 1 & 0 & 0 \\
          0 &  \sqrt{\lambda_2}&\cdots & 0 & 1 & 0\\
          0 & 0 &\dots& 0 & 0 & -1
	\end{array}\right|=
	\left|\begin{array}{ccc}
	  \mathbf{I}&\mathbf{\Lambda}&\mathbf{0}\\
          \mathbf{\Lambda}&\mathbf{I}&\mathbf{0}\\
	  \mathbf{0}^T&\mathbf{0}^T&1  	
	\end{array}\right|=-|\mathbf{I}-\mathbf{\Lambda}^2|=-\prod_{i=1}^k(1-\lambda_i),
$$ 
where 
$$\Lambda=\left[\begin{array}{cccc}
	\sqrt{\lambda_1} & 0& \cdots & 0\\
	0 &\sqrt{\lambda_2} & \cdots & 0\\
	\hdotsfor{4}\\
	0 & 0 & \cdots & \sqrt{\lambda_k}
	\end{array}\right]$$ 
	and $\mathbf{I}$ is the identity matrix
of the size $k\times k$. 

Let $S=R$. Each cycle $e_i\in\mathbf{E}$ and
$f_i\in\mathbf{F}$ belongs to the space $\langle r \rangle^\perp$ and can be
projected onto the Lie quadric in the direction of the $r$. The projections 
$\hat{e}_i$ and $\hat{f}_i$ have homogeneous coordinates 
$\hat{E}_i=E_i-R$ and $\hat{F}_i=F_i-R$ (since $(E_i\mid E_i)=1$ and $(F_i\mid F_i=1)$ and 
$$(\hat{E}_i\mid \hat{F}_i)=(E_i-R\mid F_i-R)=\sqrt{\lambda_i}-1.$$ 
Since $(\hat{E}_i\mid R)=(\hat{F}_i\mid R)=1$ it follows that $\hat{E}_i$ and $\hat{F}_i$ are homogeneous coordinates in the chart $\mathcal{U}_r$ and 
by \eqref{cikli1} 
$\sqrt{\lambda_i}=-\cos\alpha_i$, where $\alpha_i$ is the angle of intersection of the geometric cycles corresponding to $\hat{e}_i\in\langle\mathbf{x},s\rangle\cap\Omega$ 
and $\hat{f}_i\in\langle\mathbf{y},s\rangle\cap\Omega$. The eigenvalue $\lambda_i$ is thus equal to $\cos^2\alpha_i$. 

\subsubsection{The case of $(S\mid S)=0$}
Since $1$ is a degenerate eigenvalue, the eigenvectors $(\mathbf{E},S)$ and $(\mathbf{F},S)$ do not span the subspaces $\langle\mathbf{X},S\rangle$ and 
$\langle\mathbf{Y},S\rangle$, so there exist vectors $T$ and $U$ such that
$\langle T, S\rangle=\langle \mathbf{E} \rangle^\perp\cap 
\langle\mathbf{X,S}\rangle$ and $\langle U,
S\rangle=\langle \mathbf{F} \rangle^\perp\cap 
\langle\mathbf{Y,S}\rangle$. 
Let  $\hat{t}$ and $\hat{u}$ be the projections of $t$ and $u$ onto the quadric along $s$, 
and $\hat{T}$ and $\hat{U}$ homogeneous coordinates in
the chart $\mathcal{U}_s$ so that a basis of the space $\langle T, S\rangle$, $(\hat{T},S)$ and
similarly $(\hat{U},S)$. Then 
$\langle\mathbf{E},\hat{T},S\rangle=\langle \mathbf{X},S\rangle$ and  
$\langle\mathbf{F},\hat{U},S\rangle=\langle \mathbf{Y},S\rangle$ 
and the discriminant is 
 
\begin{equation}\label{zafix}
        \delta(\mathbf{x},\mathbf{y},s)=\left|\begin{array}{ccc}
          \mathbf{I}&\mathbf{\Lambda}&\mathbf{0}\\
          \mathbf{\Lambda}&\mathbf{I}&\mathbf{0}\\
          \mathbf{0}^T&\mathbf{0}^T&D   
        \end{array}\right|,
\end{equation}
where 
$$\Lambda=\left[\begin{array}{cccc}
        \sqrt{\lambda_1} & 0& \cdots & 0\\
        0 &\sqrt{\lambda_2} & \cdots & 0\\
        \hdotsfor{4}\\
        0 & 0 & \cdots & \sqrt{\lambda_{k-1}}
        \end{array}\right],$$
        $\mathbf{I}$ is the identity matrix of 
	size of $k-1\times k-1$ and\\ 
	$$D=\left[\begin{array}{ccc}
		0&(\hat{T}\mid \hat{U})&1\\
		(\hat{T}\mid \hat{U})&0&1\\
		1&1&0		
		\end{array}\right|.$$
		
Thus, 
$$
	\delta(\mathbf{x},\mathbf{y},s)=2(\hat{T}\mid \hat{U})
\prod_{i=1}^{k-1}(1-\lambda_i).
$$
 
Let $S=W$. 
Cycles $e\in\mathbf{e}$ and $f\in\mathbf{f}$ belong to the space 
$\langle w\rangle^\perp$ and cannot be
projected onto
the Lie quadric along $w$. Let  $\hat{e}_i$ and $\hat{f}_i$ be the projections  
along $r$. These cycles represent the planes of the two $w$-families. Local coordinates in the chart $\mathcal{U}_r$ are 
$\hat{E}_i=E_i-R$ and
$\hat{F}_i=F_i-R$, so $(\hat{E}_i\mid \hat{F}_i)=1-\lambda_i=\sin^2\alpha_i$ where $\alpha_i$   
the angle between the planes $\hat{e}_i$ and
$\hat{f}_i$. On the other hand, $\hat{T}$ and $\hat{U}$ are local coordinates in $\mathcal{U}_w$, 
$(\hat{T}\mid\hat{U})=-\frac{d^2}{2}$, where $d$ is the tangential distance between the cycles
$\hat{t}$ and $\hat{u}$. 
As we will see later, this tangential distance is the minimal 
tangential distance between the cycles of the families $\langle\mathbf{x},w\rangle\cap\Omega$ and
$\langle\mathbf{y},w\rangle\cap\Omega$.  
Thus 
$$\delta(\mathbf{x},\mathbf{y},w)=-d^2\prod_{i=1}^{k-1}\sin^2\alpha_i.$$

\subsection{Critical points}
Let $\langle\mathbf{x},s\rangle\cap\Omega$ and $\langle\mathbf{y},s\rangle\cap\Omega$ be
$k$-parametric hyperbolic families, such that
$\delta(\mathbf{x},\mathbf{y},s)<0$. The two products
$\pxv \pyv $ and
$\pyv \pxv $ both have rank
$k+1$ and there are at most $k+1$
nonzero eigenvalues. One of them is $1$, and its corresponding
eigenvector is $S$.

Now consider the function
\begin{equation}
h(x,y)=\frac{\Delta(X,Y,S)}{\Delta(X,S)\Delta(Y,S)}
\end{equation}
where $x\in\langle\mathbf{x},s\rangle$ and $y\in\langle\mathbf{y},s\rangle$ vary independently.
\begin{theorem}\label{dvasopa}
  The function $h(x,y)$ restricted to 
$\langle\mathbf{x},s\rangle\cap\Omega\times\langle\mathbf{y},s\rangle\cap\Omega$
  has at least one critical point.
  If $(e,f)$ is a critical point of $h(x,y)$ then 
  $e$ is a critical point of 
  $h_1(y)=\frac{\Delta(\mathbf{X},Y,S)}{\Delta(\mathbf{X},S)\Delta(Y,S)}$
  and $f$ is a critical point of
  $h_2(x)=\frac{\Delta(X,\mathbf{Y},S)}{\Delta(X,S)\Delta(\mathbf{Y},S)}$.
  \begin{enumerate}
  \item If $(S\mid S)<0$ the function $h(x,y)$
  restricted to
  $\langle\mathbf{x},s\rangle\cap\Omega\times\langle\mathbf{y},s\rangle\cap\Omega$
  has a critical point
  at a pair $(\hat{e},\hat{f})$ obtained by projecting a fixed point $e$ of 
  $\pxm\pym$ and
  $f$ of $\pym(e)$ of
  $\pym\pxm$
  onto $\langle\mathbf{x},s\rangle\cap\Omega$ and
  $\langle\mathbf{y},s\rangle\cap\Omega$, respectively, along $s$. 
  All critical points are of this type.
  \item If $(S\mid S)=0$, there exists a unique fixed projective line $\langle t, s\rangle$ of 
  $\pxm\pym$ and a unique fixed projective line $\langle u, s \rangle$ 
  of $\pym\pxm$. These intersect the Lie quadric in points $\hat{t}$ and $\hat{u}$. The point $(\hat{t},\hat{u})$ is the only  
 critical point of 
  $h(x,y)$ restricted to the $\langle\mathbf{x},s\rangle\cap\Omega\times\langle\mathbf{y},s\rangle\cap\Omega$.
  \end{enumerate}
\end{theorem}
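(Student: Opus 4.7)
The plan is to reduce the joint critical-point problem to the single-family problem handled by Theorem~\ref{osopihinciklih}. Since the variables $x$ and $y$ range independently over the two families, a pair $(e,f)$ is a critical point of $h(x,y)$ on $\langle\mathbf{x},s\rangle\cap\Omega\times\langle\mathbf{y},s\rangle\cap\Omega$ if and only if $e$ is a critical point of $x\mapsto h(x,f)=\delta(x,f,s)$ on $\langle\mathbf{x},s\rangle\cap\Omega$ and $f$ is a critical point of $y\mapsto h(e,y)=\delta(e,y,s)$ on $\langle\mathbf{y},s\rangle\cap\Omega$. By Theorem~\ref{osopihinciklih} each of these single-variable critical points lies on the projective line through $s$ and the Lie projection onto the opposite family; composing the two conditions shows that the projective line $\langle e,s\rangle$ is invariant under $\pxm\pym$ and $\langle f,s\rangle$ is invariant under $\pym\pxm$, which at the linear level is exactly the statement that $E$ and $F$ are eigenvectors of $\pxv\pyv$ and $\pyv\pxv$ modulo $\langle S\rangle$.

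For part~(1), when $(S\mid S)<0$, the eigenvalue analysis of \S\ref{steiner-base} provides $k$ linearly independent nontrivial eigenvectors $E_1,\dots,E_k$ of $\pxv\pyv$ in addition to the trivial eigenvector $S$. Each $E_i$ lies outside $\langle s\rangle^\perp$, so by Theorem~\ref{discr-family} it has two projections $\hat e_i^{\pm}$ onto $\Omega$ along $s$; the partner $F_i=\pyv E_i$ projects similarly, yielding the critical pairs $(\hat e,\hat f)$ described in the statement. Since the $E_i$ together with $S$ span $\langle\mathbf{X},S\rangle$, every critical pair arises in this way, and existence follows because the hypothesis $\delta(\mathbf{x},\mathbf{y},s)<0$ forces at least one nontrivial eigenvalue.

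The main obstacle is part~(2). When $(S\mid S)=0$ the eigenvalue $1$ of $\pxv\pyv$ is degenerate and the eigenvectors $(\mathbf{E},S)$ span only a codimension-one subspace of $\langle\mathbf{X},S\rangle$, so one must augment the basis with the extra vector $T$ introduced just before the theorem (and $U$ on the $\mathbf{y}$-side), whose $\pxv\pyv$-image again lies in $\langle T,S\rangle$. The projective line $\langle t,s\rangle$ is therefore invariant under $\pxm\pym$; the nontrivial eigenvector lines $\langle e_i,s\rangle$ are also invariant but lie entirely in $\langle s\rangle^\perp$ and hence meet $\Omega$ only at $s$, producing no genuine critical cycle. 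Since $T$ can be chosen with $(T\mid S)\neq 0$, the line $\langle t,s\rangle$ is transverse to $\langle s\rangle^\perp$ at $s$ and meets $\Omega$ in exactly one further point $\hat t$, whose existence is secured by the hyperbolicity hypothesis. Repeating the argument for $\pym\pxm$ produces $\hat u$, and $(\hat t,\hat u)$ is the unique critical pair, completing the proof.
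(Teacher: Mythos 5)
Your overall strategy---reduce to the one-variable problem via Theorem \ref{osopihinciklih} and identify critical pairs with eigenvectors of $\pxv\pyv$---is broadly the paper's strategy, but three steps do not hold up. First, your reduction asserts that every single-variable critical point ``lies on the projective line through $s$ and the Lie projection onto the opposite family.'' Theorem \ref{osopihinciklih} says more than that: when $(S\mid S)<0$ the function $x\mapsto\delta(x,f,s)$ has a \emph{second} critical locus, the cycles with $(X\mid F)=0$, where $h=1/(S\mid S)$. Your composition argument silently discards this branch, so it cannot deliver ``all critical points are of this type'': a pair $(\hat e_i,\hat f_j)$ with $i\ne j$ satisfies $(E_i\mid F_j)=0$, is therefore critical for $h$, yet $f_j\neq\pym e_i$. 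The paper avoids this by differentiating $h_1(y)=\delta(\mathbf{x},y,s)$ --- the discriminant of $y$ against the \emph{whole} family --- whose critical equation $\pyv\pxv Y=(\pxv Y\mid Y)\,Y$ has no second branch and forces $Y$ to be an eigenvector of $\pyv\pxv$.

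Second, your claim in part (1) that ``each $E_i$ lies outside $\langle s\rangle^\perp$'' is false and contradicts the unnumbered proposition following Proposition \ref{lastnevrednosti}: since $\delta(\mathbf{x},\mathbf{y},s)<0$, the nontrivial eigenvectors satisfy $\langle\mathbf{E}\rangle\subset\langle S\rangle^\perp$. The two projections onto $\Omega$ along $s$ exist for exactly the opposite reason --- by Theorem \ref{discr-family} a cycle of $\langle\mathbf{x},s\rangle\cap\langle s\rangle^\perp$ has $(E_i\mid E_i)>0$, hence $\Delta(E_i,S)<0$ and the Lemma gives two projections. Third, in part (2) you assert without proof that the $\pxv\pyv$-image of $T$ lies again in $\langle T,S\rangle$; since the Lie adjoint of $\pxv\pyv$ is $\pyv\pxv$ rather than itself, invariance of the complement $\langle\mathbf{E}\rangle^\perp\cap\langle\mathbf{X},S\rangle$ is not automatic. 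The paper obtains it by computing $dh_1=0$ explicitly in the chart $(Y\mid S)=1$ and deriving equation \eqref{tix}, $\pyv\pxv Y=Y-S\,(Y\mid Y-\pxv Y)$, which is what actually exhibits the fixed projective line as $\langle u,s\rangle$ with $U\in\langle\mathbf{F}\rangle^\perp$ and yields uniqueness. Without some version of that computation your part (2) is unsupported.
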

\begin{proof}
  Consider the function 
  	$$h_1(y)=\frac{\Delta(\mathbf{X},Y,S)}{\Delta(\mathbf{X},S)\Delta(Y,S)}=\frac{\Delta(P_{\langle\mathbf{X,S}\rangle^\perp}Y)}{\Delta(Y,S)}=
	\frac{(Y\mid Y)-(Y\mid P_{\langle \mathbf{X},S\rangle}Y)}{\Delta(Y,S)}.$$ 
  \begin{enumerate}
  \item  Let $(S\mid S)<0$ and let $(e,f)$ be such that $e$ is a fixed point of 
	$\pxm\pym$, $f$ is a fixed point of
  	$\pym\pxm$, and
  	$e=\pxm f$ (and thus
  	$f=\pym e$). 
  	According the theorem \ref{osopihinciklih}, $e$ is a critical point of $h(x,f)$ and $f$ is a critical point  
  	of $h(e,y)$.
  	Without loss of generality we
  	may assume that $(S\mid S)=-1$. 
  	Since $h_1$ is constant on projective lines $\left<y,s\right>$, it
  	suffices to consider its values on the subspace
  	$\left<s\right>^\perp$ to find its critical points on
  	$\left<\mathbf{y},s\right>$. Let $y\in
  	\left<\mathbf{y},s\right>\cap\left<s\right>^\perp$, so $\Delta(Y,S)=-(Y\mid Y)$. 
	By theorem \ref{discr-family}, 
  	$\langle\mathbf{y},s\rangle\cap\left<s\right>^\perp\cap\Omega=\emptyset$,
  	and $(Y\mid Y)>0$ and
  	$$h_1(y)=-\frac{\Delta(Y-\pxv Y)}{(Y\mid Y)}= 
        -\left(1-\frac{\pxv Y\mid Y)}{(Y\mid Y)}\right).$$
  	The critical points $y$ satisfies the condition
  	$$
	\frac{2(\pxv Y\mid dY)(Y\mid Y)-
	(\pxv Y\mid Y)(Y\mid dY)}{(Y\mid Y)^2}=0
	$$
  	for all
  	$dY\in\langle\mathbf{Y},S\rangle$. 
	Without loss of generality we can assume that $(Y\mid Y)=1$  so
  	$$\pxv Y=(\pxv Y\mid
  	Y)Y.$$
  	Applying $\pxv $ on both sides we obtain
  	$$\pyv \pxv Y=
	(\pxv Y\mid Y)Y,$$
  	which is true if and only if $Y$ is an eigenvector $F$.  
	So $f$ is a critical point oh $h_1(y)$ and $\hat{f}$ is a critical point of 
	its restriction to 
	$\langle\mathbf{x},s\rangle\cap\Omega$. Clearly, all critical points are of this kind.
  \item Let $(S\mid S)=0$.  In this case the function $h_1(y)$ is defined on 
	$\langle\mathbf{y},s\rangle\setminus\langle s\rangle^\perp$ 
	and constant on projective lines $\langle x,s
	\rangle$, and
	$$
	h_1(y)=	\frac{(Y\mid Y)-(Y\mid P_{\langle\mathbf{X},S\rangle} Y)}{(Y\mid S)^2}.$$
So 
$$dh_1=2\frac{\left((dY\mid Y)-(dY\mid \pxv Y)\right)
(Y\mid S)^2-\left((Y\mid Y)-(Y\mid \pxv Y)\right)
(Y\mid S)(dY\mid S)}{(X\mid S)^4}.$$
	The point $y$ is critical if 
	$$
		\left(dY\mid (Y-\pxv Y)-\frac{S}{(Y\mid S)}(Y\mid Y-\pxv Y)\right)=0,	
	$$
	that is, if 
	 $$Y-\pxv Y=
                \frac{S}{(Y\mid S)}(Y\mid Y-\pxv Y).$$
Without loss of generality we may assume that $(Y\mid S)=1$. After applying $\pyv $ and rearranging, we obtain 
	\begin{equation}\label{tix}
		\pyv \pxv Y=
		Y-S(Y\mid Y-\pxv Y).
	\end{equation}
	The solution $Y$ of the equation is not an
	eigenvector. Let $U\in\langle\mathbf F\rangle^\perp$. The projective line $\langle u, s\rangle$ is a fixed line of the 
	$\pym\pxm$.
        The line  intersects the quadric in the unique
	critical point of $h_1(y)$ restricted to $\langle\mathbf{y},s\rangle\cap\Omega$. 
\end{enumerate}
\end{proof}

\subsection{Two pencils in $\RR^3$}
In this section we will focus on geometric cycles and pencils in
$\RR^3$. 

 Let $\langle\mathbf{x},r\rangle\cap\Omega$ and
$\langle\mathbf{y},r\rangle\cap\Omega$ be two hyperbolic (Steiner) pencils, which
determine two subcycles (circles or lines) in $\RR^3$. 
Now $\langle\mathbf{x}\rangle=\langle x_1,x_2\rangle$ where $x_1$ and
$x_2$ are proper cycles and  similarly for
$\langle\mathbf{y}\rangle=\langle y_1,y_2\rangle$.
Since $(R\mid\! R)=-1$, the critical points
of the the function $h$ on
$\langle\mathbf{x},r\rangle\times\langle
\mathbf{y},r\rangle$
are of the form $(e,f)$, where $e$ and $f=P_{\langle\mathbf{y},r\rangle}(e)$ are
fixed points of the product of projectors.  One of these critical
points is $(r,r)$. We will be interested in the critical points on 
$\langle\mathbf{x},r\rangle\cap\Omega\times\langle\mathbf{y},r\rangle\cap\Omega$.  
Since
$\langle\mathbf{x},r\rangle\cap\Omega\times\langle\mathbf{y},r\rangle\cap\Omega$ is a
compact set, and $h$ is continuous, there must be two 
critical points $(\hat{e}_1,\hat{f}_1)$ and $(\hat{e}_2,\hat{f}_2)$ where the
maximum and minimum are achieved.
\begin{theorem}Let $\langle\mathbf{x},r\rangle\cap\Omega$ and
  $\langle\mathbf{y},r\rangle\cap\Omega$ be independent in the sense that they have
  no common cycles.   The corresponding subcycles in $\RR^3$ intersect if
  $\delta(\mathbf{x},\mathbf{y},r)=0$,
  they are linked if 
  $\delta(\mathbf{x},\mathbf{y},r)<0$, and are unlinked
  if $\delta(\mathbf{x},\mathbf{y},r)>0$.
  If $\delta(\mathbf{x},\mathbf{y},r)<0$, then the smallest and
  largest angle of intersection between a cycle $x\in\langle\mathbf{x},r\rangle\cap\Omega$ and
  a cycle $y\in\langle\mathbf{y},r\rangle\cap\Omega$ are reached at pairs
  $(\hat{e}_1,\hat{f}_1)$
  and $(\hat{e}_2,\hat{f}_2)$, which are obtained by projecting the fixed
  points $e_1,e_2$ of $P_{\langle\mathbf{x},r\rangle}P_{\langle\mathbf{y},r\rangle}$ and
  $f_1=P_{\langle\mathbf{y},r\rangle}(e_1)$, $f_2=P_{\langle\mathbf{y},r\rangle}(e_2)$ of
  $P_{\langle\mathbf{y},r\rangle}P_{\langle\mathbf{x},r\rangle}$ onto $\Omega$ along 
  $r$.
\end{theorem}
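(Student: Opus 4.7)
The plan is to split the argument in three stages, one for each qualitative case of the sign of $\delta(\mathbf{x},\mathbf{y},r)$, and then to treat the extremal angle statement separately. Throughout, I will use that a Steiner pencil $\langle\mathbf{x},r\rangle\cap\Omega$ corresponds to a subcycle (circle or line) $C_1\subset\RR^3$, and that its cofamily $\langle\mathbf{x},r\rangle^\perp\cap\Omega$ consists precisely of the point cycles on $C_1$, and analogously for $(\mathbf{y},r)$ and $C_2$.

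For the case $\delta(\mathbf{x},\mathbf{y},r)=0$, I invoke proposition \ref{dvasopa1}. Under the independence assumption, the two pencils have no common cycle, so the only remaining possibility is that the cofamilies meet; that is, there is a point $p$ which is a common point of $C_1$ and $C_2$. Conversely, any intersection point of $C_1$ and $C_2$ produces a common cycle of the two cofamilies, hence $\delta=0$.

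For the cases $\delta\neq 0$, the independence hypothesis guarantees that the span $\langle \mathbf{X},\mathbf{Y},R\rangle$ is five-dimensional in $\RR^6$, with nondegenerate restriction of the Lie form. The Lie orthogonal complement is then a single projective point, spanned by some $Z\in\langle R\rangle^\perp$, so $z$ represents an unoriented M\"obius cycle, real or imaginary depending on the sign of $(Z\mid Z)$. Since the Lie form on $\RR^6$ has signature $(4,2)$, and the hyperbolicity of the two pencils already forces the restriction to $\langle\mathbf{X},\mathbf{Y},R\rangle$ to have index at least $1$, the sign of $\Delta(\mathbf{X},\mathbf{Y},R)$ distinguishes between signatures $(4,1)$ and $(3,2)$; a direct sign check gives $\mathrm{sign}(Z\mid Z)=\mathrm{sign}(\delta(\mathbf{x},\mathbf{y},r))$. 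When $\delta>0$, the cycle $z$ is a genuine (real) sphere or plane which is M\"obius orthogonal to every cycle through $C_1$ and to every cycle through $C_2$; this is a common perpendicular sphere to $C_1$ and $C_2$, and the existence of such a sphere is the classical M\"obius criterion for the two subcycles to be unlinked (one may reduce to the standard form of two coaxial circles by a M\"obius transformation sending $z$ to a plane). When $\delta<0$, no such real common perpendicular exists, and the same normalization argument produces a Hopf-linked pair; hence the circles are linked.

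For the extremal-angle statement under $\delta(\mathbf{x},\mathbf{y},r)<0$: one applies theorem \ref{dvasopa}(1) with $s=r$, using $(R\mid R)=-1$. The projector products $P_{\langle\mathbf{X},R\rangle}P_{\langle\mathbf{Y},R\rangle}$ and $P_{\langle\mathbf{Y},R\rangle}P_{\langle\mathbf{X},R\rangle}$ act on three-dimensional subspaces, and, apart from the common eigenvector $R$ with eigenvalue $1$, they have exactly two further nonzero eigenvalues, giving the two fixed-point pairs $(\hat{e}_1,\hat{f}_1)$ and $(\hat{e}_2,\hat{f}_2)$ after projection to $\Omega$ along $r$. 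Combining the formula for $\delta(x,y,r)$ derived in section \ref{mesan} (so that $h(x,y)=\delta(x,y,r)=-\sin^2\alpha$ when $x,y$ intersect in angle $\alpha$) with compactness of $\langle\mathbf{x},r\rangle\cap\Omega\times\langle\mathbf{y},r\rangle\cap\Omega$ and continuity of $h$, the extremes of $h$ are attained at these two critical points, and correspond to the minimal and maximal intersection angles. Equivalently, in the notation of section \ref{steiner-base}, $\lambda_1=\cos^2\alpha_1$ and $\lambda_2=\cos^2\alpha_2$ are the two nontrivial eigenvalues, and the identity $\delta(\mathbf{x},\mathbf{y},r)=-\sin^2\alpha_1\sin^2\alpha_2$ identifies them as the extremal angles.

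The main obstacle is the middle step: translating the sign of $(Z\mid Z)$ into the topological statement about linking. The algebra produces a canonical cycle $z$ orthogonal to both subcycles, but the equivalence ``$z$ is a real sphere $\Leftrightarrow$ the circles are unlinked'' is a classical M\"obius geometric fact whose cleanest justification is via a conformal normalization to a standard configuration (two concentric coplanar circles in the unlinked case, a Hopf link in the linked case); writing this out rigorously while keeping the Lie-geometric setup is the subtle part of the argument.
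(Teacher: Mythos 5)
Your treatment of the case $\delta(\mathbf{x},\mathbf{y},r)=0$ and of the extremal--angle statement follows the paper's own route: proposition \ref{dvasopa1} for the former, and for the latter the eigenvalue normal form of section \ref{steiner-base} (so that $\delta(\mathbf{x},\mathbf{y},r)=-(1-\lambda_1)(1-\lambda_2)$ with $\lambda_i=\cos^2\alpha_i$) combined with theorem \ref{dvasopa} and compactness of $\langle\mathbf{x},r\rangle\cap\Omega\times\langle\mathbf{y},r\rangle\cap\Omega$. That part is essentially identical to the paper and is fine.

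For the linked/unlinked dichotomy you take a genuinely different route. The paper stays inside the two pencils: from $\delta(\mathbf{x},\mathbf{y},r)=-(1-\lambda_1)(1-\lambda_2)$ it observes that $\delta>0$ forces one $\lambda_i>1$, i.e.\ some pair of spheres, one through each subcycle, that do not intersect; since linked circles have the property that every sphere through one meets every sphere through the other, the subcycles must then be unlinked. You instead pass to the one-dimensional Lie orthogonal complement $\langle Z\rangle$ of $\langle\mathbf{X},\mathbf{Y},R\rangle$ and read off the sign of $(Z\mid Z)$. Your signature bookkeeping is correct: since the ambient form has index $2$ and the denominator of $\delta$ is positive, $\mathrm{sign}(Z\mid Z)=\mathrm{sign}\,\Delta(\mathbf{X},\mathbf{Y},R)=\mathrm{sign}\,\delta(\mathbf{x},\mathbf{y},r)$, and $z$ is indeed the unique candidate for a M\"obius cycle orthogonal to both subcycles. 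What this buys is a single canonical invariant object; what it costs is that the entire topological content is now concentrated in the classical equivalence ``real common orthogonal sphere exists if and only if the circles are unlinked,'' which you explicitly leave unproved. That is the genuine gap in the proposal as written: the conformal normalization argument still has to be carried out, and one must also handle the configurations where $\langle\mathbf{X},\mathbf{Y},R\rangle$ degenerates. In fairness, the paper's own proof is comparably terse at the corresponding point (it asserts without argument that a non-intersecting pair implies unlinked, and gives no separate argument that $\delta<0$ implies linked), so your version is not weaker than the published one --- but the key step is flagged rather than closed.
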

\begin{proof} 
    The first statement follows directly from proposition \ref{dvasopa1}.
    Since $\Delta(\mathbf{X},R)<0$ and $\Delta(\mathbf{Y},R)<0$, the
    sign of $\delta(\mathbf{x},\mathbf{y},r)$ is equal to the sign
    of $\Delta(\mathbf{X},\mathbf{Y},S)$. Let $(E_1,E_2,R)$ and
    $(F_1,F_2,R)$ be bases of $\left<\mathbf{X},R\right>$ and
    $\left<\mathbf{Y},R\right>$ consist of eigenvectors of 
    $P_{\left<\mathbf{X},R\right>}P_{\left<\mathbf{Y},R\right>}$ and
    $P_{\left<\mathbf{Y},R\right>}P_{\left<\mathbf{X},R\right>}$, respectively.  
    Since the Lie
    form restricted to $\left<\mathbf{X},R\right>$ or
    $\left<\mathbf{Y},R\right>$ has index $1$ and $(R\mid R)<0$,
    we can normalized them so that$(E_i,E_j)=\delta_{ij}$,
    $(F_i\mid F_j)=\delta_{ij}$ and
	$(E_i\mid F_j)=\delta_{ij}\sqrt{\lambda_i}$, where
    $\lambda_i$ are corresponding eigenvalues (see the section
    \ref{steiner-base}).
    Then,
    $$
     \Delta(\mathbf{X},\mathbf{Y},S)=-(1-\lambda_1)(1-\lambda_2)
    $$
    If $\delta(\mathbf{x},\mathbf{y},r)>0$, then
    not all pairs of the cycles $(x,y)$,
    $x\in \langle\mathbf{x},r\rangle\cap\Omega$, $y\in
\langle\mathbf{y},r\rangle\cap\Omega$ intersects, which implies
    that the subcycles are unlinked.
    If $\delta(\mathbf{x},\mathbf{y},r)<0$ then by theorem
    \ref{dvasopa} the extreme values of $h(x,y)=-\sin^2\alpha$
    $(x,y)\in\langle\mathbf{x},r\rangle\cap\Omega\times\langle\mathbf{y},r\rangle\cap\Omega$ 
    are achieved at the two pairs
    $(\hat{e}_1,\hat{f}_1)$ and $(\hat{e}_2,\hat{f}_2)$ with maximal and
    minimal angles of intersection $\alpha$.
\end{proof}

The situation with cones (and in general with
$s$-pencils, where $(S\mid S)=0$) is different than in the case of
subcycles (where $(S\mid S)<0$). 

\begin{figure}[h!]
	\includegraphics[scale=0.20]{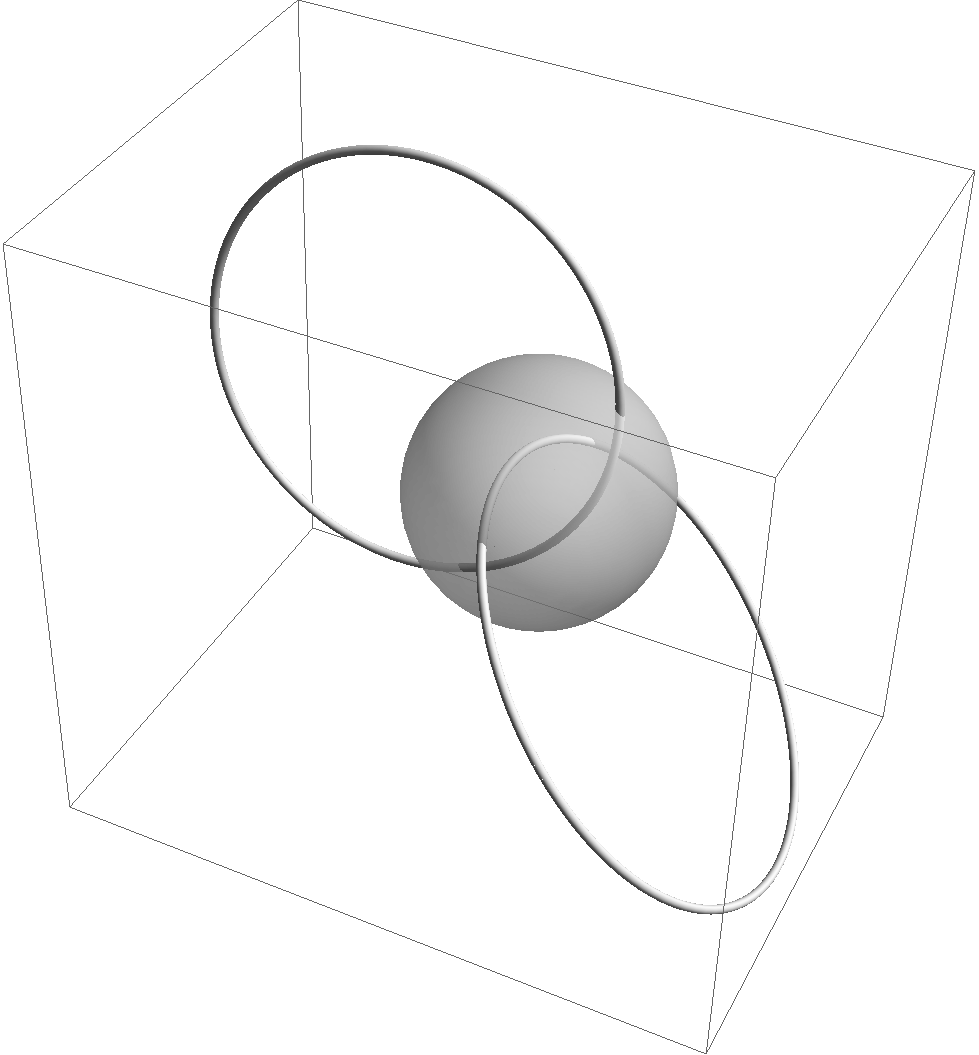}\hfill
        \includegraphics[scale=0.18]{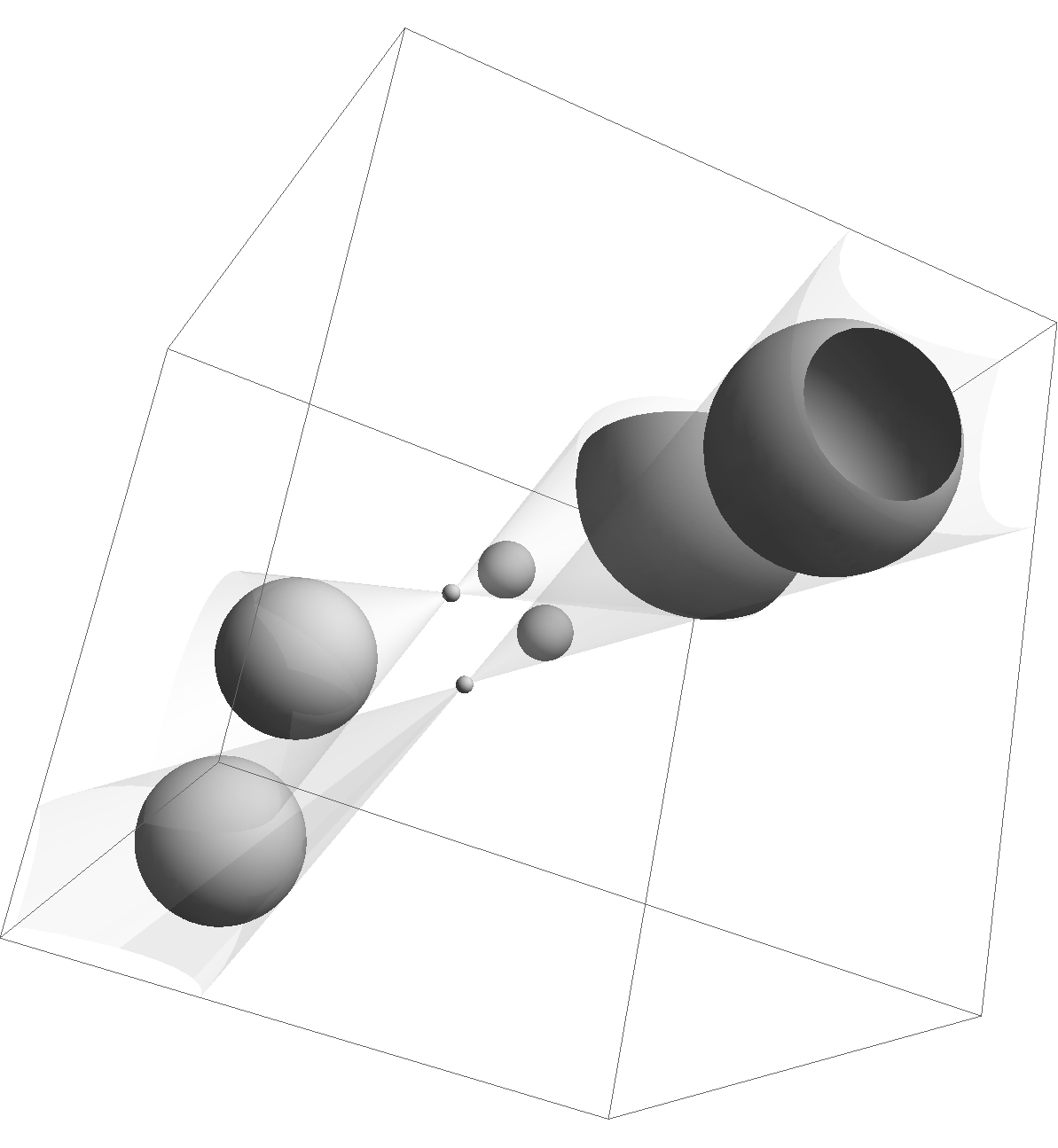} 
\caption{The case of two linked circles and two nonintersecting
cones, $\delta<0$}\label{Linked}
\end{figure}

\begin{theorem}
  Let $\langle\mathbf{x},w\rangle\cap\Omega$ and $\langle\mathbf{y},w\rangle\cap\Omega$ be cones.
\begin{enumerate} 
\item If $\delta(\mathbf{x},\mathbf{y},w)=0$, then two cones
  either share a common cycle or an oriented tangential plane
  or they have parallel axes.  
\item If $\delta(\mathbf{x},\mathbf{y},w)<0$, then for any pair of
  cycles
  $(x,y)\in\langle\mathbf{x},w\rangle\cap\Omega\times\langle\mathbf{y},w\rangle\cap\Omega$ 
  their tangential
  distance exists. The minimal tangential distance is reached at a
  pair $(\hat{e},\hat{f})$ which is obtained by projecting the fixed 
  points $e$ of $P_{\langle\mathbf{x},w\rangle} 
  P_{\langle\mathbf{y},w\rangle}$ 
  and $f$  of 
  $P_{\langle\mathbf{y},w\rangle}P_{\langle\mathbf{x},w\rangle}$ along $w$ onto 
  $\Omega$.
  \item If $\delta(\mathbf{x},\mathbf{y},w)>0$ then there exist two cycles, one
  of each family, which have no tangential distance. 
\end{enumerate}
\end{theorem}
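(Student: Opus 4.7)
The plan is to combine Proposition \ref{dvasopa1} with the explicit formula for the discriminant in the degenerate case $(S\mid S)=0$ derived in equation \eqref{zafix}, together with the critical-point analysis of Theorem \ref{dvasopa}(2).

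For part (1), I would apply Proposition \ref{dvasopa1} directly. It tells us that $\delta(\mathbf{x},\mathbf{y},w)=0$ iff either the projective subspaces $\langle\mathbf{x},w\rangle$ and $\langle\mathbf{y},w\rangle$ contain a common proper cycle $z\neq w$, or the cofamilies $\langle\mathbf{x},w\rangle^\perp\cap\Omega$ and $\langle\mathbf{y},w\rangle^\perp\cap\Omega$ intersect. The first alternative provides a geometric cycle shared by both cones. The second alternative, since a cone's cofamily consists of its tangent planes, yields a common oriented tangent plane. The remaining geometric possibility---parallel axes---arises as the degenerate sub-case in which the shared element sits asymptotically at the $w$-direction, so that the tangent planes match only in direction but not in position.

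For parts (2) and (3), I would exploit the decomposition \eqref{zafix}:
$$
\delta(\mathbf{x},\mathbf{y},w)=2(\hat{T}\mid \hat{U})\prod_{i=1}^{k-1}(1-\lambda_i),
$$
where $\hat{t},\hat{u}$ are the projections onto $\Omega$ along $w$ of the unique fixed projective lines of $P_{\langle\mathbf{x},w\rangle}P_{\langle\mathbf{y},w\rangle}$ and $P_{\langle\mathbf{y},w\rangle}P_{\langle\mathbf{x},w\rangle}$ guaranteed by Theorem \ref{dvasopa}(2), and $1-\lambda_i=\sin^2\alpha_i$ for angles $\alpha_i$ between plane eigencycles. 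Under the hypotheses of (2) and (3) the families are hyperbolic with no shared cycle, so by part (1) all factors $\sin^2\alpha_i$ are strictly positive; hence $\operatorname{sign}\delta=\operatorname{sign}(\hat{T}\mid \hat{U})$. By equation \eqref{cikli2}, the tangential distance between $\hat{t}$ and $\hat{u}$ exists iff $(\hat{T}\mid\hat{U})\leq 0$, in which case it equals $\sqrt{-2(\hat{T}\mid\hat{U})}$. Therefore $\delta(\mathbf{x},\mathbf{y},w)>0$ forces $(\hat{T}\mid\hat{U})>0$, exhibiting the pair $(\hat{t},\hat{u})$ with no tangential distance required by (3); and $\delta(\mathbf{x},\mathbf{y},w)<0$ yields $(\hat{T}\mid\hat{U})<0$, giving the tangential distance at $(\hat{t},\hat{u})$ needed for (2).

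To finish part (2) I still have to promote the existence of tangential distance from the critical pair to \emph{every} pair in the product, and to identify $(\hat{t},\hat{u})$ as the minimizer. For this I would invoke Theorem \ref{dvasopa}(2): $(\hat{t},\hat{u})$ is the unique critical point of $h(x,y)=\delta(x,y,w)$ on $\langle\mathbf{x},w\rangle\cap\Omega \times \langle\mathbf{y},w\rangle\cap\Omega$, and the value $h(\hat{t},\hat{u})=\delta(\mathbf{x},\mathbf{y},w)$ is the relevant extremum of the tangential-distance functional $-2(X\mid Y)$ restricted to proper cycle pairs. Combined with continuity on the compact product, one concludes that $(X\mid Y)\leq (\hat{T}\mid\hat{U})<0$ for all proper pairs, so tangential distance exists universally and is minimized at $(\hat{t},\hat{u})$.

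The main obstacle is precisely this last extremality step: because $(S\mid S)=0$ puts $w$ into the family as an improper limit point where $h$ becomes ill-defined, one must handle the asymptotic behaviour of $h$ near $w$ carefully and rule out that $h$ changes sign on the set of proper cycles despite having only a single critical pair. A limit analysis exploiting the hyperbolic signature of $\langle\mathbf{X},\mathbf{Y},W\rangle$ together with the uniqueness of the fixed line from Theorem \ref{dvasopa}(2) should rigorously close this gap.
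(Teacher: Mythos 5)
Your treatment of parts (2) and (3) follows essentially the same route as the paper: both rest on the factorization $\delta(\mathbf{x},\mathbf{y},w)=2(\hat{T}\mid\hat{U})\prod_i(1-\lambda_i)=-d^2\prod_i\sin^2\alpha_i$ from the $(S\mid S)=0$ analysis and on the critical-point statement of Theorem \ref{dvasopa}. The residual issue you flag at the end --- that the single critical pair only gives a local extremum, while $h$ is undefined at $w$ and the set of proper cycles in $W$-chart coordinates is not compact, so global control of the sign of $(X\mid Y)$ over all pairs needs an extra argument --- is real, but the paper's own proof is equally terse there and simply cites Theorem \ref{dvasopa}. So on (2) and (3) you are at parity with the paper.

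Part (1), however, contains a genuine gap. Proposition \ref{dvasopa1} is a one-way implication: it says that a common cycle or intersecting cofamilies \emph{forces} $\delta=0$. The theorem you are proving needs the converse --- $\delta(\mathbf{x},\mathbf{y},w)=0$ implies one of the three geometric alternatives --- and you cannot extract that from the proposition. Moreover, the parallel-axes alternative is not derived at all; the sentence about the shared element sitting ``asymptotically at the $w$-direction'' is an assertion, not an argument, and it is not even clear which of the two alternatives of Proposition \ref{dvasopa1} it is supposed to refine. The paper instead reads part (1) directly off the factorization you already use for (2) and (3): in $\RR^3$ one has $\delta(\mathbf{x},\mathbf{y},w)=-d^2\sin^2\alpha$, so $\delta=0$ forces $d=0$ (a common oriented tangent plane, since $(\hat{T}\mid\hat{U})=-d^2/2$), or $\sin\alpha=0$ (the plane eigencycles coincide in direction, i.e.\ parallel axes), or else the eigenvector decomposition degenerates because the spanning cycles are linearly dependent, which is the common-cycle case handled by Proposition \ref{dvasopa1}. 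If you route part (1) through the same factorization you invoke for the other two parts, the trichotomy falls out cleanly and the appeal to Proposition \ref{dvasopa1} is only needed for the degenerate common-cycle alternative.
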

\begin{proof}\ \\
\begin{enumerate}
	\item The discriminant is equal to
	$$
		\delta(\mathbf{x},\mathbf{y},w)=-d^2\sin^2\alpha.
	$$
	So the discriminant  is zero either if $d=0$, that is, the minimal distance is
	zero and there is a common tangential plane, or $\alpha=0$ and axes
	of the cones are parallel. The discriminant is zero also 
	if cycles in $(\mathbf{x},\mathbf{y})$ are linearly dependent and 
	the families share a common cycle, (see proposition
	\ref{dvasopa1}). 
	\item If the discriminant is negative then the axes are not parallel and 
	each pair of cycles containing a cycle from each family has a tangential
	distance, (see proposition \ref{dvasopa}).
	\item follows from proposition \ref{dvasopa}.
\end{enumerate}
\end{proof}

The meaning of the discriminant of two hyperbolic cones or subcycles can be
expressed in terms of the
special determinants, see section \ref{simplex}.
Let us take a look at cone families. 
The discriminant
$\delta(\mathbf{x},\mathbf{y},w)=\frac{\Delta(\mathbf{X},\mathbf{Y},W)}
{\Delta(\mathbf{X},W)\Delta(\mathbf{Y},W)}$ is
proportional to  the 
squared volume of the simplex spanned by tangential distances of
cycles $\mathbf{x}$ and $\mathbf{y}$ divided by the product of the squared volumes of
the facets spanned by tangential distances of $\mathbf{x}$ and
$\mathbf{y}$. In the case of two lines defined by a pair of points in 
space, the discriminant is equal to the squared volume of the simplex spanned by
all four points divided by the product of squared distances between the points of the
same line.
It follows that 
\begin{proposition}
The discriminant of two hyperbolic cone families is equal zero if the volume of the contact simplex 
spanned by tangential distances is zero, is negative if the volume of the
contact simplex is different from zero and is positive if there is no such
simplex. 
\end{proposition}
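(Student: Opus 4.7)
The plan is to reduce the statement to an analysis of the sign of the bordered Gram determinant $\Delta(\mathbf{X},\mathbf{Y},W)$ and then to read off that sign from equation~\eqref{Vol}. Since both cone families are hyperbolic one has $\Delta(\mathbf{X},W)<0$ and $\Delta(\mathbf{Y},W)<0$, so the denominator $\Delta(\mathbf{X},W)\,\Delta(\mathbf{Y},W)$ in
\[
\delta(\mathbf{x},\mathbf{y},w)=\frac{\Delta(\mathbf{X},\mathbf{Y},W)}{\Delta(\mathbf{X},W)\,\Delta(\mathbf{Y},W)}
\]
is strictly positive. Consequently the sign of $\delta(\mathbf{x},\mathbf{y},w)$ coincides with the sign of $\Delta(\mathbf{X},\mathbf{Y},W)$, and it is enough to analyze the latter.

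Next, I would apply the computation leading to \eqref{Vol}, but now to the concatenated list $(\mathbf{X},\mathbf{Y},W)$ rather than to $(\mathbf{X},W)$ or $(\mathbf{Y},W)$ separately. In the chart $\mathcal{U}_w$ the off-diagonal Lie products are, by \eqref{cikli2}, exactly $-d_{ij}^2/2$, where $d_{ij}$ ranges over all pairwise tangential distances between cycles from $\mathbf{x}\cup\mathbf{y}$; the remaining border rows and columns contributed by $W$ are unchanged. Thus the identity in \eqref{Vol} goes through verbatim and, via the Cayley--Menger formula~\eqref{Cayley-Menger}, yields
\[
\Delta(\mathbf{X},\mathbf{Y},W)= -\bigl((2k-1)!\bigr)^{2}\vol(q_1,\ldots,q_{2k})^2
\]
whenever the cycles of $\mathbf{x}\cup\mathbf{y}$ have a common oriented tangent plane, with $q_1,\ldots,q_{2k}$ being the joint tangency points, i.e., the vertices of the joint contact simplex.

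The three cases now follow by combining this with theorem~\ref{proj-family} applied to the list $(\mathbf{X},\mathbf{Y},W)$. The existence of a common oriented tangent plane to all cycles in $\mathbf{x}\cup\mathbf{y}$ is equivalent to $\langle\mathbf{x},\mathbf{y},w\rangle^\perp\cap\Omega\neq\emptyset$, which by that theorem happens iff $\Delta(\mathbf{X},\mathbf{Y},W)\le 0$. If no contact simplex exists, then $\Delta(\mathbf{X},\mathbf{Y},W)>0$ and therefore $\delta(\mathbf{x},\mathbf{y},w)>0$. If the contact simplex exists with nonzero volume, the displayed formula gives $\Delta(\mathbf{X},\mathbf{Y},W)<0$ and thus $\delta(\mathbf{x},\mathbf{y},w)<0$. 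If the contact simplex exists but is degenerate, the same formula gives $\Delta(\mathbf{X},\mathbf{Y},W)=0$ and $\delta(\mathbf{x},\mathbf{y},w)=0$.

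The main obstacle I anticipate is matching the two characterizations of the boundary case $\Delta(\mathbf{X},\mathbf{Y},W)=0$: Cayley--Menger degeneracy of the joint contact simplex on one hand, and parabolicity of the combined family in the sense of theorem~\ref{proj-family} on the other. These agree because the Gram matrix in the $\mathcal{U}_w$ chart is, up to the border $W$-row, the Cayley--Menger matrix of the tangential distances, so vanishing of the Lie determinant is precisely vanishing of the simplex volume. The remaining subsidiary case $\delta=0$ coming from linear dependence among $(\mathbf{x},\mathbf{y})$ (cf.\ proposition~\ref{dvasopa1}) is itself a degeneration of the contact simplex, so it is already subsumed in the second bullet.
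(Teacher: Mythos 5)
Your reduction to the sign of $\Delta(\mathbf{X},\mathbf{Y},W)$ and the identification of the Gram matrix in the chart $\mathcal{U}_w$ with the Cayley--Menger matrix of the pairwise tangential distances is exactly the route the paper takes. The gap is in the geometric interpretation of the numerator. You read the contact simplex as the simplex of joint tangency points of all $2k$ cycles on a single common oriented tangent plane, and you justify the trichotomy by applying theorem~\ref{proj-family} to the list $(\mathbf{X},\mathbf{Y},W)$. That list has $2k+1$ vectors, and in the case the paper actually treats --- two cone pencils in $\RR^3$, so $k=2$ and $2k+1=5=n+2$ --- this is outside the range $k\le n+1$ of theorem~\ref{proj-family}; part (1) of that theorem fails there, since for $\Delta(\mathbf{X},\mathbf{Y},W)<0$ the complement $\langle\mathbf{x},\mathbf{y},w\rangle^\perp$ is a single point $z$ with $(Z\mid Z)<0$, hence disjoint from $\Omega$. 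Geometrically the same thing is visible: four spheres in $\RR^3$ admitting a common tangent plane have four coplanar tangency points, so the joint contact simplex is degenerate and $\Delta(\mathbf{X},\mathbf{Y},W)=0$; the generic case $\delta<0$ (the paper's $\delta=-d^2\prod\sin^2\alpha_i$) has no common tangent plane at all. So both your displayed volume formula and your equivalence ``common tangent plane $\iff\Delta\le 0$'' break down precisely in the main case of interest.

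What the proposition means by ``the contact simplex spanned by tangential distances'' is the abstract simplex whose edge lengths are the pairwise tangential distances $d_{ij}$ of the cycles in $\mathbf{x}\cup\mathbf{y}$: each pair separately has a common tangent plane, but all $2k$ cycles together need not. Existence and degeneracy of that simplex are read off directly from the sign of the Cayley--Menger determinant, which is $\Delta(\mathbf{X},\mathbf{Y},W)$ up to the border row: negative means the distances are realized by a nondegenerate Euclidean simplex, zero means the realization is degenerate (which subsumes linear dependence of the spanning cycles), and positive means no such simplex exists --- either the distances are not realizable, or some pair has no tangential distance, in which case the corresponding Gram entry is $+d^2/2$ by \eqref{ex} rather than $-d^2/2$. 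With that reading, your first step (sign of $\delta$ equals sign of the numerator, since both denominators are negative) already carries the whole proof, and no appeal to theorem~\ref{proj-family} is needed or wanted.
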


In the case of subcycles the discriminant can be expressed in the terms of a quotient of volumes of simplices or polar sines, but the geometrical meaning 
is less  evident.    

Figure \ref{Linked} on the left shows two linked subcycles corresponding to Steiner
pencils $\langle\mathbf{x},r\rangle\cap\Omega$ and 
$\langle\mathbf{y},r\rangle\cap\Omega$. 
The sphere is the minimal sphere of the product family
$\langle\mathbf{x},\mathbf{y},r\rangle\cap\Omega$. 
The circles intersect the sphere in
antipodal points. On the right the figure shows two non intersecting cones. 
Two middle spheres are the spheres of minimal tangential distance. 
\newpage

\end{document}